\numberwithin{equation}{section}
\newtheorem{theorem}[equation]{Theorem}
\newtheorem{proposition}[equation]{Proposition}
\newtheorem{lemma}[equation]{Lemma}
\newtheorem{corollary}[equation]{Corollary}
\theoremstyle{definition}
\newtheorem{rmk}[equation]{Remark}
\newenvironment{remark}[1][]{\begin{rmk}[#1] \pushQED{\qed}}{\popQED \end{rmk}}
\newtheorem{eg}[equation]{Example}
\newenvironment{example}[1][]{\begin{eg}[#1] \pushQED{\qed}}{\popQED \end{eg}}
\newtheorem{defnaux}[equation]{Definition}
\newenvironment{definition}[1][]{\begin{defnaux}[#1]\pushQED{\qed}}{\popQED \end{defnaux}}
\newcommand{\bA}{\mathbf{A}}
\newcommand{\cC}{\mathcal{C}}
\newcommand{\fI}{\mathfrak{I}}
\newcommand{\bN}{\mathbf{N}}
\newcommand{\cO}{\mathcal{O}}
\newcommand{\bS}{\mathbf{S}}
\newcommand{\fS}{\mathfrak{S}}
\newcommand{\cU}{\mathcal{U}}
\newcommand{\bV}{\mathbf{V}}
\newcommand{\cZ}{\mathcal{Z}}
\newcommand{\fa}{\mathfrak{a}}
\newcommand{\fb}{\mathfrak{b}}
\newcommand{\rf}{\mathrm{f}}
\newcommand{\myuline}[1]{%
  \uline{\phantom{#1}}%
  \llap{\contour{white}{#1}}%
}
\DeclareMathOperator{\uRep}{\text{\myuline{\rm Rep}}}
\DeclareMathOperator{\uage}{\text{\myuline{\rm age}}}
\DeclareMathOperator{\umu}{\text{\myuline{\rm $\mu$}}}
\let\ol\overline
\let\ul\underline
\DeclareMathOperator{\im}{im}
\DeclareMathOperator{\Sym}{Sym}
\DeclareMathOperator{\Aut}{Aut}
\DeclareMathOperator{\Spec}{Spec}
\DeclareMathOperator{\Rep}{Rep}
\DeclareMathOperator{\Fun}{Fun}
\DeclareMathOperator{\Ind}{Ind}
\renewcommand{\phi}{\varphi}
\renewcommand{\emptyset}{\varnothing}
\newcommand{\pol}{\mathrm{pol}}
\renewcommand{\Vec}{\mathrm{Vec}}
\newcommand{\GL}{\mathbf{GL}}
\DeclareMathOperator{\age}{age}
\DeclareMathOperator{\Sh}{Sh}
\newcommand{\Mod}{\mathrm{Mod}}
\newcommand{\unu}{\ul{\smash{\nu}}}
\newcommand{\Pol}{\mathbf{Pol}}
\newcommand{\ulambda}{\ul{\smash{\lambda}}}
\let\defn\emph
\newcommand{\DOI}[1]{\href{http://doi.org/#1}{\color{purple}{\tiny\tt DOI:#1}}}
\newcommand{\arxiv}[1]{\href{http://arxiv.org/abs/#1}{{\tiny\tt arXiv:#1}}}
\author{Nate Harman}
\address{Department of Mathematics, University of Georgia, Athens, GA}
\email{\href{mailto:nharman@uga.edu}{nharman@uga.edu}}
\urladdr{\url{https://www.nateharman.com/}}
\author{Andrew Snowden}
\address{Department of Mathematics, University of Michigan, Ann Arbor, MI}
\email{\href{mailto:asnowden@umich.edu}{asnowden@umich.edu}}
\urladdr{\url{http://www-personal.umich.edu/~asnowden/}}
\thanks{AS was supported by NSF DMS-2301871.}
\title[Tensor spaces and the geometry of polynomial representations]{Tensor spaces and the geometry of\\ polynomial representations}
\date{July 25, 2024}
\begin{document}

\begin{abstract}
A \defn{tensor space} is a vector space equipped with a finite collection of multi-linear forms. In previous work, we showed that (for each signature) there exists a universal homogeneous tensor space, which is unique up to isomorphism. Here we generalize that result: we show that each Zariski class of tensor spaces contains a weakly homogeneous space, which is unique up to isomorphism; here, we say that two tensor spaces are \defn{Zariski equivalent} if they satisfy the same polynomial identities. Our work relies on the theory of $\GL$-varieties developed by Bik, Draisma, Eggermont, and Snowden.
\end{abstract}

\maketitle
\tableofcontents

\section{Introduction}

\subsection{Tensor spaces} \label{ss:bg}

Fix an algebraically closed field $k$ of characteristic~0. For a tuple $\ulambda=[\lambda_1, \ldots, \lambda_r]$ of partitions, we define a \defn{$\ulambda$-space} to be a $k$-vector space $V$ equipped with linear maps $\omega_i \colon \bS_{\lambda_i}(V) \to k$ for each $1 \le i \le r$, where $\bS_{\lambda}$ denotes the usual Schur functor. For example, if $\ulambda=[(2)]$ then a $\ulambda$-space is a quadratic space, i.e., a vector space equipped with a symmetric bilinear form. The theory of countable $\ulambda$-spaces (i.e., those of dimension $\aleph_0$) has recently received some attention in the literature. We discuss three distinct threads of work, each of which ties in to the results of this paper.

\textit{(a) Universality.} A $\ulambda$-space is called \defn{universal} if every finite dimensional $\ulambda$-space embeds into it. Kazhdan and Ziegler \cite{KaZ2}, and later Bik, Danelon, Draisma and Eggermont \cite{BDDE}, established an important characterization of universal spaces (Theorem~\ref{thm:univ}). This line of work is intimately connected to strength (or Schmidt rank), which has proven so important in additive combinatorics \cite{GT,KaZ3} and commutative algebra \cite{AH, stillman}.

\textit{(b) Homogeneity.} In \cite{homoten}, we proposed that $\ulambda$-spaces can be viewed as linear analogs of the relational structures studied in model theory, and we transferred several important ideas from model theory to $\ulambda$-spaces. In particular, we defined a $\ulambda$-space $V$ to be \defn{homogeneous} if whenever $\alpha, \beta \colon W \to V$ are two embeddings of a finite dimensional $\ulambda$-space $W$, there is an automorphism $\sigma$ of $V$ such that $\beta=\sigma \circ \alpha$. One of our main results is that if $\ulambda$ is \defn{pure} (meaning no $\lambda_i$ is the empty partition) then a universal homogeneous $\ulambda$-space of countable dimension exists, and any two are isomorphic. It is quite remarkable that there is a distinguished isomorphism class of $\ulambda$-spaces.

\textit{(c) Isogeny.} Classifying $\ulambda$-spaces up to isomorphism is a hopeless problem. Two $\ulambda$-spaces are \defn{isogenous} if each embeds into the other. This is a coarser equivalence relation than isomorphism, and the evidence so far points to a good theory in this direction. In \cite{BDDE}, it is shown that there is a unique maximal isogeny class if $\ulambda$ is pure, and, in certain cases, a unique minimal universal isogeny class. In \cite{isocubic}, the isogeny classes of universal cubic spaces (i.e., $\ulambda$-spaces with $\ulambda=[(3)]$) are completely determined, with a quite elegant answer. A more comprehensive theory is forthcoming \cite{isoten}.

In this paper, we start a new thread: we examine $\ulambda$-spaces from the vantage of algebraic geometry, specifically, the theory of $\GL$-varieties developed by Bik, Draisma, Eggermont, and Snowden \cite{polygeom}. We apply results about these varieties to prove some new theorems about $\ulambda$-spaces. 

\subsection{$\GL$-varieties}

Before stating our results, we briefly recall the basic definitions surrounding $\GL$-varieties. Fix a $k$-vector space $\bV$ of countable infinite dimension, and let $\GL$ be its automorphism group. Let $\bA^{\ulambda}$ be the spectrum of the ring $R_{\ulambda} = \Sym(\bigoplus_{i=1}^r \bS_{\lambda_i}(\bV))$. An affine \defn{$\GL$-variety} is a reduced affine scheme over $k$ with an action of $\GL$ that admits an equivariant closed embedding into some $\bA^{\ulambda}$. The relevance to tensor spaces is immediate: $\bA^{\ulambda}$ is the parameter space for $\ulambda$-structures on $\bV$.

Classifying the $\GL$-orbits on a $\GL$-variety is like classifying $\ulambda$-spaces up to isomorphism, and is too difficult. Instead, following \cite{polygeom}, we work with the notion of \defn{generalized orbits}: two points belong to the same generalized orbit if each belongs to the orbit closure of the other. There it was shown that understanding generalized orbits is, in a sense, tractable: the space of generalized orbits is essentially built from combinatorial data and finite dimensional algebraic varieties.

We say that two countable $\ulambda$-spaces are \defn{Zariski equivalent} if the corresponding points of $\bA^{\ulambda}$ belong to the same generalized orbit; see \S \ref{ss:tenorb} for the general definition. Zariski equivalence is coarser than isogeny. We note that the universal $\ulambda$-spaces form a single Zariski class (Corollary~\ref{cor:univ-zar}); this Zariski class typically contains infinitely many isogeny classes.

\subsection{Results}

The main point of this paper is to extend prior results about the universal Zariski class to arbitrary Zariski classes. A key idea is that we can relax the notion of homogeneous tensor space: we define a \defn{weakly homogeneous tensor space} to be one that satisfies the condition in \S \ref{ss:bg}(b) for generic spaces $W$; see Definition~\ref{defn:weak-homo} for the precise condition. A countable quadratic space of finite rank at least two is weakly homogeneous (Example~\ref{ex:wh-fin-rk}), but not homogeneous (Example~\ref{ex:quad-homo}).

Our main theorem shows why weak homogeneity is a useful idea:

\begin{theorem} \label{mainthm}
In each Zariski class of $\ulambda$-space, there is a countable weakly homogenous space, which is unique up to isomorphism.
\end{theorem}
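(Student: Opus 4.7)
The plan is to extend the Fraïssé-style argument of \cite{homoten} from the universal setting to an arbitrary Zariski class, using the structure theory of $\GL$-varieties from \cite{polygeom} in place of the trivial amalgamation available in the universal case.

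First I would set up the geometric picture. Fix a Zariski class $\cC$, pick any $x \in \cC$, and let $X=\overline{\GL\cdot x}\subseteq \bA^{\ulambda}$. Then $X$ is an irreducible $\GL$-variety that depends only on $\cC$, and $\cC$ is exactly the set of $k$-points $y\in X$ with $\overline{\GL\cdot y}=X$, i.e.\ the generic generalized orbit of $X$. Let $\cA$ denote the \emph{age} of $\cC$, namely the class of finite-dimensional $\ulambda$-spaces that embed into some (equivalently, every) member of $\cC$.

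For existence I would build a countable $V \in \cC$ as an increasing union $V=\bigcup_n V_n$ of spaces in $\cA$ via a standard Fraïssé-style bookkeeping: at each stage I prescribe a one-step extension $V_n \subset W$ in $\cA$ and realize it inside the next $V_{n+1}\in\cA$, arranging that every possible such extension is eventually realized. The resulting $V$ will be weakly homogeneous by the usual bookkeeping argument, and by building in enough Zariski genericity at each stage one obtains $\overline{\GL\cdot V}=X$, so $V\in\cC$. To carry this out one needs joint embedding and amalgamation for $\cA$. This is where I would bring in the structure theorem of \cite{polygeom}: the generic fiber of $X$ over an auxiliary finite-dimensional base is, up to a finite map, a $\GL$-subvariety of some $\bA^{\umu}$ with a very large symmetry group, and this provides enough room to embed two prescribed extensions of a common subspace into a single amalgam still mapping into $X$.

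For uniqueness I would run a back-and-forth. Given two countable weakly homogeneous $V,V'\in \cC$, I start from a partial isomorphism between finite-dimensional subspaces and extend by one vector at a time. Weak homogeneity on each side lets me move any prescribed generic one-step extension to a specified one, while Zariski equivalence of $V$ and $V'$ ensures that the menus of possible generic one-step extensions on the two sides match. Iterating alternately on the $V$ and $V'$ sides produces the desired isomorphism.

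The main obstacle is the amalgamation property for $\cA$ within $\cC$. In the universal case of \cite{homoten}, no polynomial identities constrain the tensors and amalgamation is essentially the free sum; here one must amalgamate two extensions while preserving the identities cutting out $X$, and only the deeper geometry of $\GL$-varieties from \cite{polygeom}, together with the description of the generic behaviour of $X$ as a bundle with highly symmetric generic fibers, seems strong enough to produce the amalgams. Establishing this cleanly, and checking that the resulting Fraïssé limit genuinely lies in $\cC$ and is weakly (though typically not fully) homogeneous, is where the real work of the proof will lie.
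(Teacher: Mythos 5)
There is a genuine gap, and it sits exactly where you place the ``real work'': the amalgamation step is not just unfinished, but as stated it aims at the wrong target. If you could realize \emph{every} one-step extension inside the age $\cA$ and amalgamate over all of $\cA$, your Fra\"iss\'e limit would be a fully homogeneous space whose age is $\cA$, and such a space typically does not exist in a given Zariski class: for the class of countable quadratic spaces of finite rank $r \ge 2$, any member has infinite nullspace, and embeddings of a rank-deficient finite subspace sending the null vector into, respectively out of, the nullspace cannot be matched by an automorphism (compare Examples~\ref{ex:quad-homo} and~\ref{ex:wh-fin-rk}); amalgamation over the full age fails correspondingly. So the construction must be carried out only over a suitable dense open generic subclass $\cC_{\ulambda}(U)$ of the age, and producing that $U$ together with amalgamation (or f-injectivity) relative to it is precisely the theorem's content. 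Your appeal to ``highly symmetric generic fibers'' of the structure theory of \cite{polygeom} gestures at the right tools but is not an argument, and your own sketch defers it.

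The paper resolves this by a different mechanism that you may want to compare with: it never runs a Fra\"iss\'e construction inside $\cC_{\ulambda}$ at all. Since $\ol{O}_{V_0}$ is an orbit closure, a typical morphism has the form $\phi \colon \bA^{\umu} \to \ol{O}_{V_0}$ with $\umu$ pure (Proposition~\ref{prop:type-orb}); one takes the countable universal homogeneous $\umu$-space $W$, already constructed in \cite{homoten} where amalgamation is unproblematic, and shows that its image $\Phi(W)$ under the associated algebraic functor is weakly homogeneous (Theorem~\ref{thm:key}). The open set $U$ witnessing weak homogeneity is the stratum over which the decomposition theorem makes $\phi$ locally elementary, and the key point is a lifting lemma for locally elementary morphisms (Lemma~\ref{lem:key-3}) that pulls generic embeddings in $\cC_{\ulambda}$ back to $\cC_{\umu}$, so f-injectivity of $W$ transfers to $\Phi(W)$. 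Your uniqueness back-and-forth is essentially sound in outline, but it too needs the nontrivial input that Zariski equivalence forces equal ages (Theorem~\ref{thm:typical-age}), and it is cleanest to phrase it, as the paper does, as uniqueness of universal homogeneous objects in $\cC_{\ulambda}(\cU(V) \cap \cU(W))$ (Propositions~\ref{prop:homo} and~\ref{prop:weak-homo-age}). As it stands, then, your proposal identifies the difficulty but does not supply the idea (lifting along a typical morphism) that overcomes it.
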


This theorem can also be phrased as follows:

\begin{corollary}
There is a natural bijection between generalized orbits on $\bA^{\ulambda}$ and isomorphism classes of countable weakly homogeneous $\ulambda$-spaces.
\end{corollary}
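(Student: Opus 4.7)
The plan is to reduce the corollary directly to Theorem~\ref{mainthm} with only a small amount of bookkeeping. By the definition of Zariski equivalence recalled in \S\ref{ss:tenorb}, two countable $\ulambda$-spaces are Zariski equivalent precisely when their defining $k$-points in $\bA^{\ulambda}$ lie in the same generalized orbit. Hence the set of Zariski classes of countable $\ulambda$-spaces injects naturally into the set of generalized orbits of $\bA^{\ulambda}$.

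The first step is to check this injection is a bijection, i.e., that every generalized orbit on $\bA^{\ulambda}$ meets the locus of $k$-points corresponding to countable $\ulambda$-structures on $\bV$. Since $\bA^{\ulambda}$ is the parameter space for $\ulambda$-structures on the fixed countable-dimensional vector space $\bV$, any $k$-point of $\bA^{\ulambda}$ automatically gives rise to a countable $\ulambda$-space. Standard properties of $\GL$-varieties from \cite{polygeom} ensure that every generalized orbit contains a $k$-point (over the algebraically closed base field $k$), and any such point corresponds to a countable $\ulambda$-space. This identifies generalized orbits on $\bA^{\ulambda}$ with Zariski classes of countable $\ulambda$-spaces.

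With this identification in hand, the bijection of the corollary is defined as follows. To a generalized orbit $O$ one associates the isomorphism class of the weakly homogeneous countable $\ulambda$-space $V_O$ lying in the corresponding Zariski class; this is well-defined by the existence half of Theorem~\ref{mainthm}, and the assignment $O \mapsto [V_O]$ is injective by the uniqueness half. Conversely, any weakly homogeneous countable $\ulambda$-space $V$ determines a Zariski class, hence a generalized orbit $O$, and by uniqueness $V_O \cong V$, so the map is surjective.

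The only non-formal piece is the appeal to the machinery of \cite{polygeom} to guarantee that every generalized orbit contains a $k$-point corresponding to a countable structure; everything else is a direct translation of the main theorem into the language of generalized orbits. I would expect this to be either immediate from a result already used to set up the Zariski-equivalence relation in \S\ref{ss:tenorb}, or a short standalone observation about $k$-points of $\GL$-varieties.
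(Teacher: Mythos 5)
Your proposal is correct and matches the paper's (implicit) argument: the corollary is exactly the translation of Theorem~\ref{mainthm} through the identification of generalized orbits on $\bA^{\ulambda}$ with Zariski classes of countable $\ulambda$-spaces. The one step you flag as potentially non-formal is in fact definitional in this paper: points of a $\GL$-variety are by convention $k$-points (\S\ref{ss:orb}), so a generalized orbit is a nonempty equivalence class of $k$-points, each of which is literally a $\ulambda$-structure on the countable-dimensional space $\bV$; the proposition in \S\ref{ss:tenorb} then shows $\ol{O}_{(\bV,\omega)}=\ol{O}_{\omega}$, so no further input from \cite{polygeom} is needed.
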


The proof of this theorem uses the universality theorem of \cite{KaZ2, BDDE}, the main structural results on $\GL$-varieties from \cite{polygeom}, and the construction of universal homogeneous $\ulambda$-spaces from \cite{homoten}. As a corollary, we greatly generalize the theorem of \cite{BDDE} on maximal isogeny classes:

\begin{corollary}
In each Zariski class, there is a unique maximal isogeny class.
\end{corollary}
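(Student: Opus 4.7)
The plan is to exhibit the isogeny class of the weakly homogeneous representative from Theorem~\ref{mainthm} as the desired unique maximum. Fix a Zariski class $\mathfrak{C}$, and let $V$ be the countable weakly homogeneous $\ulambda$-space in $\mathfrak{C}$ guaranteed by the main theorem. I claim that every countable $\ulambda$-space $W$ in $\mathfrak{C}$ admits an embedding $W \hookrightarrow V$; granting this, the isogeny class $[V]$ is manifestly a maximum (everything in $\mathfrak{C}$ embeds into $V$), and if $[V']$ is any other maximal isogeny class in $\mathfrak{C}$, then $V' \hookrightarrow V$ by the claim and $V \hookrightarrow V'$ by maximality of $[V']$, so $[V] = [V']$. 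Thus only the embedding claim requires work.

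For the embedding claim, I would run a one-sided Fra\"iss\'e-style construction. Write $W = \bigcup_{n \ge 0} W_n$ as a nested union of finite dimensional subspaces $W_0 \subset W_1 \subset \cdots$, where I choose the chain so that each inclusion $W_n \hookrightarrow W_{n+1}$ is \emph{generic} in the sense required by Definition~\ref{defn:weak-homo} (this genericity is an ``everywhere along the chain'' condition and can be arranged since $W$ is countable). I build embeddings $\alpha_n \colon W_n \hookrightarrow V$ inductively. At each stage, since $W$ and $V$ lie in the same Zariski class and hence satisfy the same polynomial identities, the universality theorem of \cite{KaZ2, BDDE} (Theorem~\ref{thm:univ}) applied within the ambient $\GL$-variety produced by $\mathfrak{C}$ furnishes \emph{some} embedding $\beta \colon W_{n+1} \hookrightarrow V$; the two embeddings $\alpha_n$ and $\beta|_{W_n}$ of the generic space $W_n$ into $V$ then differ by an automorphism of $V$ by weak homogeneity, and composing $\beta$ with this automorphism yields the desired extension $\alpha_{n+1}$. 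The union of the $\alpha_n$ is the required embedding $W \hookrightarrow V$.

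The main obstacle will be establishing the finite-dimensional embedding step for $W_{n+1} \hookrightarrow V$ in a form compatible with the hypotheses of weak homogeneity. The point is that Theorem~\ref{thm:univ} gives embeddings only into \emph{universal} spaces, but $V$ need not be universal for its Zariski class in the classical sense. I therefore expect to need a relative version of universality: inside the orbit closure of the point of $\bA^{\ulambda}$ corresponding to $V$, the finite dimensional spaces that embed into $V$ are exactly those whose corresponding polynomial functionals lie in that closure. The structural results on $\GL$-varieties from \cite{polygeom} should provide this, and should also ensure that a suitably generic $W_{n+1}$ satisfies the hypothesis, so that the back-and-forth step can be carried out. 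Once this relative universality is in place, the argument above goes through verbatim and yields the corollary.
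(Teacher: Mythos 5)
Your high-level strategy---reduce the corollary to showing that every countable space in the Zariski class embeds into the weakly homogeneous $V$---is exactly the paper's. But the gap you flag in your last paragraph is the genuine crux, and Theorem~\ref{thm:univ} cannot fill it: that theorem characterizes universality (dense orbit in all of $\bA^{\ulambda}$) and gives no information once $\ol{O}_V$ is a proper closed subvariety. The ``relative universality'' you need is supplied by Theorem~\ref{thm:typical-age}, which shows $\age(V)=\cC^{\rf}_{\ulambda}(\uage(V))$ with $\uage(V)$ the image of a typical morphism $\bA^{\umu}\to\ol{O}_V$. That is a nontrivial result resting on Chevalley's theorem, the decomposition theorem, and the theory of typical morphisms from \cite{polygeom}; it does not follow formally from Theorem~\ref{thm:univ}. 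Without it you cannot produce the embedding $\beta\colon W_{n+1}\hookrightarrow V$ in your inductive step, so as written the proof is incomplete.

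Granting Theorem~\ref{thm:typical-age}, the remainder of your back-and-forth is morally correct, but it re-derives machinery the paper has already packaged. The embedding claim is Corollary~\ref{cor:wh-embed}: since $O_W=O_V\subset\cU(V)$ and $\cU(V)$ is open, once some finite $W_1\subset W$ lies in $\cC_{\ulambda}(\cU(V))$, every finite subspace containing $W_1$ does too (Proposition~\ref{prop:closed-cat}(b)), so $W$ is a countable ind-object of $\cC_{\ulambda}(\cU(V))$. Meanwhile $V$ is universal homogeneous there (universality from Theorem~\ref{thm:typical-age}, homogeneity from Definition~\ref{defn:weak-homo}), and Proposition~\ref{prop:homo}(d) gives the embedding directly. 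Two small remarks: you do not need to select a chain with each step generic---genericity of $W_1$ propagates automatically to all $W_n\supset W_1$ since $\cU(V)$ is open---and your inductive construction is, in effect, a proof of Proposition~\ref{prop:homo}(d), which the paper cites as a black box.
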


In model theory, oligomorphic permutation groups play an important role. In \cite{homoten}, we introduced the notion of linear-oligomorphic group as the linear analog of oligomorphic group (see Definition~\ref{defn:lin-olig}), and we showed that the automorphism group of the countable universal homogeneous $\ulambda$-space is linear-oligomorphic. The following theorem generalizes this result. We discuss some motivation for this circle of ideas in \S \ref{ss:motiv}(c).

\begin{theorem}
If $V$ is a countable weakly homogeneous $\ulambda$-space then $\Aut(V)$ is a linear-oligomorphic group.
\end{theorem}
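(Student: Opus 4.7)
The plan is to translate the condition into the language of $\GL$-varieties from \cite{polygeom}. I would first realize the countable weakly homogeneous space $V$ as a $k$-point $v\in\bA^{\ulambda}$, so that $\Aut(V)$ coincides with the stabilizer $G:=\GL_v$. Let $X=\overline{\GL\cdot v}$ be its orbit closure; by Theorem~\ref{mainthm}, $v$ lies in the open generalized orbit of $X$, and $V$ is determined up to isomorphism by its Zariski class (that is, by the pair $(X,v)$ up to the appropriate notion of equivalence).

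To verify linear-oligomorphicity per Definition~\ref{defn:lin-olig}, I would control $G$-orbits on each polynomial $\GL$-representation $U$ by invoking the standard equivariant bijection between $G$-orbits on $U$ and $\GL$-orbits on $(\GL\cdot v)\times U$ under the diagonal action. Passing to Zariski closures yields a $\GL$-subvariety of $X\times U$, and the structure theorems of \cite{polygeom} (finite decomposition into generalized orbits, each with finite-dimensional parametrization) then bound the generalized $\GL$-orbits on $X\times U$ lying above the open generalized orbit of $X$ by a finite amount of finite-dimensional algebraic data. The remaining step upgrades this control on generalized orbits to honest $G$-orbits by applying Theorem~\ref{mainthm} to an \emph{enlarged} signature $\ulambda'$ obtained by adjoining the Schur data of $U$ to $\ulambda$: a pair $(v,u)\in X\times U$ repackages as a $\ulambda'$-structure on $\bV$, and the weak homogeneity guaranteed for that enlarged structure by Theorem~\ref{mainthm} converts generalized $\GL$-orbits on $X\times U$ into actual $G$-orbits on $U$.

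The main obstacle will be this last step: verifying that the $\ulambda'$-structure assembled from $V$ together with a generic $u\in U$ is indeed the weakly homogeneous representative of its own Zariski class, so that Theorem~\ref{mainthm} can be applied. This compatibility between the Zariski classes over the signatures $\ulambda$ and $\ulambda'$ is the technical crux; once it is established, the finite-dimensional parametrization of generalized orbits on $X\times U$ translates directly into the orbit-finiteness condition of Definition~\ref{defn:lin-olig}. The analogous step in \cite{homoten} was automatic from full homogeneity, whereas the weakly homogeneous case requires this genericity/compatibility check, which I expect to handle by stratifying $U$ and inducting on the dimension of the finite-dimensional parametrizing schemes produced by the structure theory.
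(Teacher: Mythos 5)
Your proposal heads in a much more complicated direction than necessary, and its two central steps do not hold up. First, the target is off: Definition~\ref{defn:lin-olig} is not an orbit-finiteness (or orbit-parametrization) condition on polynomial representations. It asks that for each $n$ there be a \emph{single} finite-dimensional subspace $U$ of $V$ into which every $n$-dimensional subspace of $V$ can be moved by some element of $\Aut(V)$. Controlling $G$-orbits on a representation, or generalized $\GL$-orbits on $X\times U$, by ``a finite amount of finite-dimensional algebraic data'' does not translate into this statement, and you never supply the bridge; note for instance that the automorphism group of the countable universal homogeneous quadratic space (an infinite orthogonal group) has infinitely many orbits on vectors, distinguished by the value $\omega(v,v)$, yet it is linear-oligomorphic. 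Second, the step you yourself flag as the crux is a genuine gap, and it is also the wrong thing to hope for: Theorem~\ref{mainthm} only guarantees that the Zariski class of the enlarged $\ulambda'$-structure contains \emph{some} weakly homogeneous representative; it gives no reason that the particular point $(v,u)$, with $u$ generic, is that representative. Even if it were, weak homogeneity does not collapse a generalized orbit to a single honest orbit (generalized orbits typically contain infinitely many $\GL$-orbits), so the promised conversion of generalized $\GL$-orbits on $X\times U$ into actual $G$-orbits on $U$ does not follow.

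The paper's proof avoids all of this and is essentially two lines. By Theorem~\ref{thm:main1}(c), one can write $V=(\bV,\omega)$ with $\omega=\phi(\eta)$, where $(\bV,\eta)$ is the countable universal homogeneous $\umu$-space for a pure tuple $\umu$ and $\phi\colon\bA^{\umu}\to\bA^{\ulambda}$ is the ($\GL$-equivariant) map underlying the algebraic functor; since an algebraic functor does not change the underlying vector space, any $g\in\GL$ fixing $\eta$ also fixes $\omega=\phi(\eta)$, so $H=\Aut(\bV,\eta)\subset G=\Aut(\bV,\omega)$. By \cite[Theorem~B]{homoten}, $H$ is linear-oligomorphic, and the condition of Definition~\ref{defn:lin-olig} is trivially inherited by any overgroup of $H$ inside $\GL(\bV)$, since the witnessing element $g$ may be taken in $H$. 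If you want to salvage your route, you would need both to prove an equivalence between Definition~\ref{defn:lin-olig} and the orbit statement you intend to extract from the structure theory of \cite{polygeom}, and to resolve the genericity claim for the enlarged signature; at present both are missing, whereas the reduction through Theorem~\ref{thm:main1}(c) makes them unnecessary.
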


\subsection{Ages}

An important idea in model theory, introduced by Fra\"iss\'e, is the notion of the \defn{age} of a relational structure $X$: it is the class of all finite structures that embed into $X$. By analogy, we define the \defn{age} of a $\ulambda$-space $V$ to be the class of all finite dimensional $\lambda$-spaces that embed into $V$. We prove the following theorem:

\begin{theorem}
Two $\ulambda$-spaces $V$ and $W$ are Zariski equivalent if and only if they have the same age.
\end{theorem}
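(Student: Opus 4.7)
The plan is to characterize the age of $V$ as an invariant of its orbit closure via a finite-dimensional slice construction. Fix $n \ge 1$ and an $n$-dimensional subspace $U_0 \subseteq \bV$, and realize $\GL_n := \GL(U_0)$ inside $\GL$ by extending by the identity on a chosen complement of $U_0$ in $\bV$. The inclusion $U_0 \hookrightarrow \bV$ induces a $\GL_n$-equivariant restriction morphism $\pi_n \colon \bA^{\ulambda} \to \bA^{\ulambda}(U_0)$, where $\bA^{\ulambda}(U_0) := \Spec(\Sym(\bigoplus_i \bS_{\lambda_i}(U_0)))$ is the finite-dimensional analog of $\bA^{\ulambda}$ attached to $U_0$. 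Unwinding definitions, the $\GL_n$-orbits on the $\GL_n$-invariant subset $\pi_n(\GL \cdot v) \subseteq \bA^{\ulambda}(U_0)$ are in bijection with the isomorphism classes of $n$-dimensional sub-$\ulambda$-spaces of $V$; that is, they form the dimension-$n$ part of $\age(V)$.

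For the forward direction, assume $V$ and $W$ are Zariski equivalent, so $\overline{\GL \cdot v} = \overline{\GL \cdot w} =: X$. The key lemma I would aim to establish is the equality
\[
\pi_n(\GL \cdot v) \;=\; \pi_n(X)
\]
inside $\bA^{\ulambda}(U_0)$, for every $n$. The inclusion $\subseteq$ is immediate; the reverse asserts that every finite-dimensional sub-$\ulambda$-space appearing as a slice of some specialization of $v$ already embeds into $V$ itself. Granted this, $\pi_n(\GL \cdot v) = \pi_n(X) = \pi_n(\GL \cdot w)$ for every $n$, so $\age(V) = \age(W)$.

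For the backward direction, assume $\age(V) = \age(W)$. Then the $\GL_n$-invariant subsets $\pi_n(\GL \cdot v)$ and $\pi_n(\GL \cdot w)$ of $\bA^{\ulambda}(U_0)$ have the same $\GL_n$-orbits, hence coincide, and so do their Zariski closures. Since $R_{\ulambda}$ is the filtered union of the finite polynomial subrings $\Sym(\bigoplus_i \bS_{\lambda_i}(U_0))$ as $U_0$ varies over finite-dimensional subspaces of $\bV$, the vanishing ideal of any $\GL$-subvariety of $\bA^{\ulambda}$ is determined by its intersections with these subrings, and each such intersection is the vanishing ideal of the corresponding Zariski closure $\overline{\pi_n(\GL \cdot \bullet)}$. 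Hence $\overline{\GL \cdot v} = \overline{\GL \cdot w}$, so $V$ and $W$ are Zariski equivalent.

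The main obstacle is the key lemma in the forward direction. I would attack it by combining the structural theory of $\GL$-varieties from \cite{polygeom}---particularly the shift construction and the description of generic orbits on an irreducible $\GL$-subvariety---with universality-type arguments from \cite{KaZ2, BDDE}; these provide an abundance of linear maps $U_0 \to \bV$ with prescribed pullbacks of $v$. The intuition is that the infinite-dimensional richness of $\bV$ lets one locate an $n$-dimensional subspace of $V$ realizing any ``degenerate'' sub-$\ulambda$-structure that appears in a specialization, much as a non-degenerate quadratic form on a countable-dimensional space admits totally isotropic subspaces of arbitrary finite dimension.
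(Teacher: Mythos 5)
Your backward direction (equal ages imply Zariski equivalence) is fine, and is the paper's easy direction phrased geometrically: it is Proposition~\ref{prop:ideal-age} (that $\fI(V)=\bigcap_W \fI(W)$ over finite-dimensional subspaces $W$) combined with the fact that $R_{\ulambda}$ is the filtered union of the subrings $\Sym(\bS_{\ulambda}(U_0))$. Two minor caveats: your identification of $\GL_n$-orbits on $\pi_n(\GL\cdot v)$ with the dimension-$n$ part of $\age(V)$ presupposes $V$ identified with $\bV$, so you need the reduction to the countable case (Proposition~\ref{prop:count-ideal}); and the statement must be restricted to infinite spaces, as in Corollary~\ref{prop:Zar-age} (for finite-dimensional spaces it is false).

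The forward direction, however, rests on a key lemma that is false: in general $\pi_n(\GL\cdot v)\subsetneq \pi_n(X)$ for $X=\ol{O}_V$. The remark immediately following Corollary~\ref{prop:Zar-age} is precisely a counterexample. For $\ulambda=[(4)]$, the strength $\le 3$ locus is the image of a map $\phi\colon \bA^{\umu}\to\bA^{\ulambda}$ with $\umu$ pure, and it is not Zariski closed \cite{BBOV}. Take $\omega=\phi(\xi)$ with $\xi$ a $\GL$-generic point, so $X=\ol{O}_{(\bV,\omega)}=\ol{\im \phi}$, and take $\eta\in\ol{\im\phi}\setminus\im\phi$ supported on $k^n$. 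Then the pullback of $\eta$ to $\bV$ lies in $X$, so $\eta\in\pi_n(X)$; but every $n$-dimensional slice of $(\bV,\omega)$ lies in the image of $\bA^{\umu}\{k^n\}\to\bA^{\ulambda}\{k^n\}$ (since $\phi$, being a morphism of affine $\GL$-varieties, commutes with restriction along linear maps $k^n\to\bV$), whereas $\eta$ does not, so $(k^n,\eta)\notin\age(\bV,\omega)$ and $\eta\notin\pi_n(\GL\cdot\omega)$. So ``slices of specializations'' can genuinely fail to embed; the quadratic-space intuition misleads here because rank loci are closed, while strength loci need not be. The paper's substitute for your lemma is Theorem~\ref{thm:typical-age}: $\age(V)=\cC^{\rf}_{\ulambda}(C)$, where $C$ is the image of a typical morphism $\bA^{\umu}\to\ol{O}_V$ --- a constructible, generally non-closed, set whose finite-dimensional points can be strictly fewer than those of $\ol{O}_V$. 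Proving this uses Chevalley's theorem (Theorem~\ref{thm:chevalley}), lifting the distinguished point, and the universality theorem (Theorem~\ref{thm:univ}); and the uniqueness of typical morphisms for orbit closures (Proposition~\ref{prop:type-orb}) is what makes $C$, and hence the age, an invariant of $\ol{O}_V$ alone, which is exactly the content your key lemma was meant to supply. As written, your plan for the hard direction cannot be carried out without being reworked along these lines.
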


This is a significant result since it shows that the notion of Zariski equivalence has a direct interpretation in terms of model theory. In particular, the central idea of this paper---applying algebraic geometry to tensor spaces---is, in a sense, necessary to fully understand the model theory of tensor spaces.

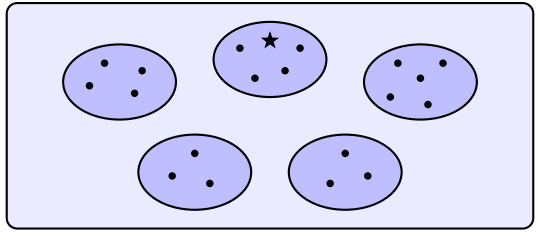
\begin{figure}
\begin{tikzpicture}
\draw[rounded corners, fill=blue!8, thick] (-3.5,1.5) rectangle (3.5,-1.5);
\draw[fill=blue!25, thick] (0, .75) ellipse (.75 and .5);
\draw[fill=blue!25, thick] (-2, .45) ellipse (.75 and .5);
\draw[fill=blue!25, thick] (-1,-.75) ellipse (.75 and .5);
\draw[fill=blue!25, thick] (1,-.75) ellipse (.75 and .5);
\draw[fill=blue!25, thick] (2, .45) ellipse (.75 and .5);
\node at (-2.2,.7)[circle,fill,inner sep=1pt]{};
\node at (-2.4,.4)[circle,fill,inner sep=1pt]{};
\node at (-1.8,.3)[circle,fill,inner sep=1pt]{};
\node at (-1.7,.6)[circle,fill,inner sep=1pt]{};
\node at (.4,.9)[circle,fill,inner sep=1pt]{};
\node at (.2,.6)[circle,fill,inner sep=1pt]{};
\node at (-.2,.5)[circle,fill,inner sep=1pt]{};
\node at (-.4,.9)[circle,fill,inner sep=1pt]{};
\node[star,star point ratio=2.5,minimum size=6pt,
          inner sep=0pt,draw=black,solid,fill=black] at (0,1) {};
\node at (2,.5)[circle,fill,inner sep=1pt]{};
\node at (2.1,.15)[circle,fill,inner sep=1pt]{};
\node at (1.6,.25)[circle,fill,inner sep=1pt]{};
\node at (1.7,.7)[circle,fill,inner sep=1pt]{};
\node at (2.3,.7)[circle,fill,inner sep=1pt]{};
\node at (-1.3,-.8)[circle,fill,inner sep=1pt]{};
\node at (-1,-.5)[circle,fill,inner sep=1pt]{};
\node at (-0.8,-.9)[circle,fill,inner sep=1pt]{};
\node at (1.3,-.8)[circle,fill,inner sep=1pt]{};
\node at (1,-.5)[circle,fill,inner sep=1pt]{};
\node at (0.8,-.9)[circle,fill,inner sep=1pt]{};
\end{tikzpicture}
\caption{A schematic picture of a Zariski class. Each oval is an isogeny class, and each point is an isomorphism class. The star is the isomorphism class of the weakly homogeneous space, and the oval to which it belongs is the maximal isogeny class. Typically, there are infinitely many isogeny classes, and infinitely many isomorphism classes in each isogeny class.}
\end{figure}

\subsection{Potential applications} \label{ss:motiv}

We now discuss a few potential applications of our results.

\textit{(a) Maps of $\GL$-varieties.} A standard approach to studying a map $\phi \colon Y \to X$ of algebraic varieties is to consider the fibers. Suppose now that $\phi$ is a map of $\GL$-varieties. We then run into a problem: the fiber $\phi^{-1}(x)$ over a point $x \in X$ does not carry an action of $\GL$ (in general), but only the stabilizer of $x$ in $\GL$. This stabilizer is often very small, or even trivial. Thus $\phi^{-1}(x)$ is typically an infinite dimensional variety with no special symmetry, and so it will be difficult to say anything meaningful about it.

Our results give a potential solution to this problem. Theorem~\ref{mainthm} shows that the generalized orbit of $x$ contains a weakly homogeneous point $x'$. For many questions about $\GL$-varieties, understanding one point in each generalized orbit is sufficient. Thus it should be enough to understand the fiber of $\phi$ above $x'$. This carries an action of the stabilizer of $x'$, which is linear oligomorphic. We expect that most of the known results about $\GL$-varieties should extend to $G$-varieties, where $G$ is a linear oligomorphic group (or at least the kind of linear oligomorphic group arising in this paper). Thus one should have numerous tools to study the fiber over $x'$.

\textit{(b) Module theory of $\GL$-algebras.} Let $A$ be a $\GL$-algebra, e.g., the coordinate ring of a $\GL$-variety. We would like to understand the category $\Mod_A$ of (equivariant) $A$-modules. For example, in the simplest case, $A=\Sym(\bV)$, the category $\Mod_A$ is equivalent to the category of $\mathbf{FI}$-modules \cite{fimodule}, and this category was studied in detail in \cite{symc1}. Some other small cases have been analyzed too, e.g., \cite{sym2noeth, symu1, symsp1}, and this all ties in with the representation theory of combinatorial categories \cite{catgb}.

Suppose that $A$ is a domain. One can then define the ``generic category'' $\Mod_A^{\rm gen}$ as the Serre quotient of $\Mod_A$ by the torsion subcategory. In all cases where $\Mod_A$ has been analyzed, the first step has been understanding the generic category; and generic categories have been studied by identifying them with representation categories of certain groups. For example, if $A=\Sym(\Sym^2(\bV))$ is the coordinate ring of $\bA^{[(2)]}$ then the generic category is identified with the category of algebraic representations of the infinite orthogonal group.

Prior to this paper, there was not a natural group associated to a general $A$. But our results furnish a group, namely, the automorphism group $G$ of the weakly homogeneous space associated to the generic generalized orbit in $\Spec(A)$. A natural conjecture is that $\Mod_A^{\rm gen}$ is equivalent to the category of algebraic representations of $G$. We proved this in \cite{homoten} (using results from \cite{tcares}) when $A$ is the coordinate ring of $\bA^{\ulambda}$.

\textit{(c) Tensor categories.} Deligne \cite{Deligne} constructed an interesting new tensor category $\uRep(\fS_t)$ by ``interpolating'' the representation theory of the finite symmetric groups. Motivated by Deligne's work, we gave a general construction of tensor categories based on oligomorphic groups \cite{repst}. Our theory recovers Deligne's category when applied to the infinite symmetric group. Other oligomorphic groups lead to fundamentally new tensor categories, such as the Delannoy category \cite{line, circle}.

Before Deligne's work, Deligne and Milne \cite{DeligneMilne} constructed a similar tensor category $\uRep(\GL_t)$ by interpolating the algebraic representation theory of general linear. This category does not fit into the theory of \cite{repst}. Our hope is that \cite{repst} admits a generalization to the linear oligomorphic case: the basic idea is that $\uRep(\GL_t)$ should come from the infinite general linear group, while other linear oligomorphic groups might lead to new tensor categories. For this reason, we are generally interested in constructing new linear oligomorphic groups. We find many new examples in this paper.

\subsection{Questions}

We mention a few open questions related to this paper:
\begin{enumerate}
\item The maximal isogeny class in a Zariski class contains a distinguished isomorphism class (the weakly homogeneous one). Do other isogeny classes contain a distinguished isomorphism class?
\item As mentioned, there is often a unique minimal isogeny class among universal $\ulambda$-spaces \cite{BDDE}. Does each Zariski class contain a unique minimal isogeny class?
\item Can the results of this paper be extended to more general coefficient fields (i.e., not algebraically closed, or of positive characteristic)?
\end{enumerate}

\subsection{Outline}

In \S \ref{s:gl}, we review material on $\GL$-varieties. In \S \ref{s:ten}, we begin the process of integrating the theories of tensor spaces and $\GL$-varieties. In \S \ref{s:age}, we study the ``age'' of a tensor space; this is another example of a concept we import from model theory. In \S \ref{s:homo}, we review generalities on homogeneous objects, and establish some basic properties of weakly homogeneous tensor spaces. The key technical result is proved in \S \ref{s:key}, and then the main theorems are deduced from it in \S \ref{s:main}.

\subsection{Notation}

We list some of the important notation:
\begin{description}[align=right,labelwidth=2cm,leftmargin=!]
\item [$k$] the base field, always algebraically closed of characteristic~0
\item [$\bV$] a $k$-vector space of dimension $\aleph_0$
\item [$\GL$] the group of all linear automorphisms of $\bV$
\item [$\ulambda$] a tuple of partitions
\item [$\bA^{\ulambda}$] the basic affine $\GL$-variety (\S \ref{ss:glvar})
\item [$\cC_{\ulambda}$] the category of $\ulambda$-spaces (\S \ref{ss:forms})
\end{description}

\section{$\GL$-varieties} \label{s:gl}

In \S \ref{s:gl}, we review the theory of $\GL$-varieties developed in \cite{polygeom}. We refer to \cite{polygeom2a, polygeom2b, charp} for additional results in this direction.

\subsection{Polynomial representations}

Let $\bV=\bigcup_{n \ge 1} k^n$ be a vector space of countable infinite dimension, and let $\GL=\GL(\bV)$ be the group of all automorphisms of $\bV$. For the purposes of \S \ref{s:gl}, one could instead use the ``small version'' of this group, namely $\bigcup_{n \ge 1} \GL_n$, but for our applications to tensor spaces it is essential to use the big version. A \defn{polynomial representation} of $\GL$ is a representation that occurs as a subquotient of a (possibly infinite) direct sum of tensor powers of $\bV$. We let $\Rep^{\pol}(\GL)$ denote the category of such representations. It is abelian and closed under tensor product.

For a partition $\lambda$, we let $\bS_{\lambda}$ denote the usual Schur functor. For a tuple $\ulambda=[\lambda_1, \ldots, \lambda_r]$ of partitions (often simply called a ``tuple''), we let $\bS_{\ulambda}=\bS_{\lambda_1} \oplus \cdots \oplus \bS_{\lambda_r}$. We call $\ulambda$ \defn{pure} if it contains no empty partitions. The irreducible polynomial representations are exactly the representations $\bS_{\lambda}(\bV)$, for arbitrary partitions $\lambda$. The category $\Rep^{\pol}(\GL)$ is semi-simple. It follows that every finite length polynomial representation is isomorphic to some $\bS_{\ulambda}(\bV)$.

Let $\Vec$ denote the category of all vector spaces and $\Fun(\Vec, \Vec)$ the category of all endofunctors of $\Vec$, which is an abelian category. For $n \in \bN$, let $T_n \colon \Vec \to \Vec$ be the functor given on objects by $T_n(V)=V^{\otimes n}$. A \defn{polynomial functor} is an object of $\Fun(\Vec, \Vec)$ that occurs as a subquotient of a (possibly infinite) direct sum of $T_n$'s. We let $\Pol$ denote the category of polynomial functors. The functor $\Pol \to \Rep^{\pol}(\GL)$ given by $F \mapsto F(\bV)$ is an equivalence of categories. In particular, if $M$ is a polynomial representation of $\GL$, we can evaluate the corresponding polynomial functor on an arbitrary vector space $V$; we denote this by $M\{V\}$.

\subsection{$\GL$-varieties} \label{ss:glvar}

A \emph{$\GL$-algebra} is a commutative algebra object in $\Rep^{\pol}(\GL)$. Thus, explicitly, it is a commutative (and associative and unital) $k$-algebra $R$ equipped with an action of $\GL$ by algebra homomorphisms such that it forms a polynomial representation of $\GL$. We say that a $\GL$-algebra $R$ is \emph{finitely $\GL$-generated} if it is generated as a $k$-algebra by the $\GL$-orbits of finitely many elements; equivalently, there is a surjection of $\GL$-algebras $\Sym(V) \to R$ for some finite length polynomial representation $V$.

An affine \defn{$\GL$-variety} is a reduced affine scheme $X$ over $k$ equipped with an action of the group $\GL$ such that $\Gamma(X, \cO_X)$ is a finitely $\GL$-generated $\GL$-algebra. A quasi-affine $\GL$-variety is a $\GL$-stable open subscheme of an affine $\GL$-variety. A \emph{morphism} of $\GL$-varieties is a $\GL$-equivariant map of schemes over $k$. Let $\ulambda=[\lambda_1, \ldots, \lambda_r]$ be a tuple. Then $R_{\ulambda}=\Sym(\bS_{\ulambda}(\bV))$ is a $\GL$-algebra that is reduced and finitely $\GL$-generated. We let $\bA^{\ulambda}$ be its spectrum, which is a $\GL$-variety. The $\GL$-varieties $\bA^{\ulambda}$ play the role of the basic affine spaces $\bA^n$ in classical algebraic geometry: every affine $\GL$-variety is isomorphic to a closed $\GL$-subvariety of some $\bA^{\ulambda}$.

The most important result currently known about $\GL$-varieties is Draisma's theorem \cite{Draisma}: a $\GL$-variety $X$ is \defn{topologically noetherian}, in the sense that the descending chain condition holds for $\GL$-stable closed subsets of $X$. One consequence of this theorem is that any closed subvariety of $X$ is defined (set-theoretically) by the vanishing of finitely many orbits of functions on $X$.

Let $X=\Spec(R)$ be an affine $\GL$-variety. For an arbitrary vector space $V$, we have an algebra $R\{V\}$, and we define $X\{V\}=\Spec(R\{V\})$. In this way, $X$ defines a contravariant functor from $\Vec$ to the category of schemes; of course, $X\{V\}$ is also functorial in $X$. If $V$ is finite dimensional then $X\{V\}$ is in fact reduced and of finite type over $k$, and thus an ordinary finite dimensional variety. This construction mostly works in the quasi-affine case too:

\begin{proposition} \label{prop:qaff-func}
The construction $X\{V\}$ is defined for any quasi-affine $\GL$-variety $X$ and vector space $V$. It is functorial for surjective maps of vector spaces, and arbitrary maps of quasi-affine $\GL$-varieties.
\end{proposition}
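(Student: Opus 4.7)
The plan is to reduce to the affine case, where $X\{V\}$ is given by evaluating the polynomial functor associated to the coordinate ring, and then handle the quasi-affine case by realizing $X$ as an open complement inside an affine $\GL$-variety. Concretely, given a quasi-affine $X = Y \setminus Z$ with $Y = \Spec(R)$ affine and $Z \subset Y$ a $\GL$-stable closed subvariety, the ideal $I \subset R$ of $Z$ is a $\GL$-subrepresentation; by $\Pol \simeq \Rep^{\pol}(\GL)$ applied to $I \hookrightarrow R$, this yields a canonical ideal $I\{V\} \subset R\{V\}$ for every $V$, realizing $Z\{V\}$ as the closed subscheme of $Y\{V\}$ cut out by $I\{V\}$. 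I would then define $X\{V\} := Y\{V\} \setminus Z\{V\}$. To see this is independent of the presentation, I would compare two realizations $Y_1, Y_2 \supset X$ via the affine $\GL$-variety $Y_{12} := \overline{X} \subset Y_1 \times Y_2$: it contains $X$ as an open subset, maps equivariantly to both $Y_i$, and the affine-case functoriality in $X$ yields canonical isomorphisms identifying the three candidate definitions of $X\{V\}$.

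For functoriality in $V$ under a surjection $\phi : V \twoheadrightarrow V'$, the ring map $R\{V\} \to R\{V'\}$ induces a scheme map $\phi^* : Y\{V'\} \to Y\{V\}$. Because polynomial functors preserve surjections --- being subquotients of direct sums of tensor powers, each of which preserves surjections --- the natural map $I\{V\} \to I\{V'\}$ is also surjective. Hence the ideal generated by the image of $I\{V\}$ inside $R\{V'\}$ is exactly $I\{V'\}$, so the equality $(\phi^*)^{-1}(Z\{V\}) = Z\{V'\}$ holds, and taking complements yields a well-defined restriction $\phi^* : X\{V'\} \to X\{V\}$. The surjectivity of $\phi$ is essential here: for a non-surjective $\phi$, the preimage of $Z\{V\}$ can properly contain $Z\{V'\}$, so that the would-be restriction would send some points of $X\{V'\}$ into $Z\{V\}$.

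For functoriality in $X$ under an arbitrary morphism $f : X \to X'$ of quasi-affine $\GL$-varieties --- which need not extend to the chosen affine completions --- I would work locally: cover $X$ and $X'$ by $\GL$-stable affine open subvarieties $\{U_\alpha\}$ and $\{U'_\beta\}$ so that each $f(U_\alpha)$ is contained in some $U'_{\beta(\alpha)}$, apply the affine-case functoriality to each $U_\alpha \to U'_{\beta(\alpha)}$, and glue along intersections using the well-definedness above. The main obstacle is precisely this well-definedness step: nothing in the intrinsic data of a quasi-affine $X$ privileges a particular affine completion, and the comparison via $Y_{12}$ is the one place where something nontrivial about $\GL$-varieties (the affineness of the scheme-theoretic image in a product) is used; everything else is formal manipulation of the equivalence $\Pol \simeq \Rep^{\pol}(\GL)$.
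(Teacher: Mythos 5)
Your reduction to the affine case and your treatment of functoriality along a surjection $V \to V'$ (polynomial functors preserve surjections, so the image of $I\{V\}$ in $R\{V'\}$ is all of $I\{V'\}$ and the preimage of $Z\{V\}$ is exactly $Z\{V'\}$) are sound, and that part is close to the paper's own argument. The first genuine gap is the well-definedness step. The paper never faces it: it defines $X\{V\}$ via the \emph{canonical} presentation $X = X_0 \setminus X_1$, where $X_0 = \Spec \Gamma(X,\cO_X)$ and $X_1 = X_0 \setminus X$, so no affine completion is ever chosen. You allow an arbitrary completion and compare two of them through $Y_{12} = \ol{X} \subseteq Y_1 \times Y_2$; but affine-case functoriality only produces the maps $Y_{12}\{V\} \to Y_i\{V\}$, and your assertion that these restrict to isomorphisms $Y_{12}\{V\} \setminus Z_{12}\{V\} \to Y_i\{V\} \setminus Z_i\{V\}$ is precisely the content of well-definedness and is given no proof. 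It is not formal: evaluation at a finite-dimensional $V$ is very lossy. For example, with $\ulambda = [(1),(1)]$ and $Z \subset \bA^{\ulambda}$ the locus where the two linear forms are proportional, every point of $\bA^{\ulambda}\{V\}$ with $\dim V = 1$ is the restriction of a point of $Z$ (and such restrictions lie in $Z\{V\}$ by naturality), so $Z\{V\} = \bA^{\ulambda}\{V\}$ and the nonempty open $X = \bA^{\ulambda} \setminus Z$ has $X\{V\} = \emptyset$. So a morphism of affine $\GL$-varieties that is an isomorphism over a common open $\GL$-subvariety cannot simply be asserted to induce an isomorphism on the evaluated complements; a real argument (or the paper's device of fixing the canonical hull) is needed.

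The second gap is your functoriality in $X$: the plan to cover $X$ and $X'$ by $\GL$-stable affine open subvarieties and glue fails at the first step, because such covers generally do not exist. Take $X = \bA^{[(1)]} \setminus \{0\}$. The only proper nonempty $\GL$-stable closed subset of $\bA^{[(1)]} = \Spec\Sym(\bV)$ is the origin (a $\GL$-stable ideal is a sum of symmetric powers, hence of the form $\bigoplus_{n \ge d} \Sym^n(\bV)$, and the only proper nonzero radical one is the maximal ideal), so the only nonempty $\GL$-stable open subset of $X$ is $X$ itself; and $X$ is not affine, since $\Gamma(X,\cO_X) = \Sym(\bV)[1/x] \cap \Sym(\bV)[1/y] = \Sym(\bV)$ for independent $x,y \in \bV$, while $X \ne \bA^{[(1)]}$. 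Thus even the identity morphism of this $X$ is out of reach of your gluing scheme; the scarcity of $\GL$-stable affine opens is exactly why the paper's local statements are phrased with $G(m)$-stable opens instead. The paper's proof handles both issues at once: a morphism $\phi \colon Y \to X$ of quasi-affine $\GL$-varieties induces $\phi_0 \colon Y_0 \to X_0$ on the canonical affine hulls directly from $\Gamma(X,\cO_X) \to \Gamma(Y,\cO_Y)$, one checks $Y_0 \times_{X_0} X_1 \subseteq Y_1$ and uses that $(-)\{V\}$ is compatible with fiber products to get $Y\{V\} \to X\{V\}$, and functoriality for a surjection $V \to W$ is proved via a splitting $V = V' \oplus W$, much as you did.
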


\begin{proof}
Suppose $X$ is quasi-affine. Write $X=X_0 \setminus X_1$, where $X_0$ is an affine $\GL$-variety, and $X_1 \subset X_0$ is a closed $\GL$-subvariety. This expression is canonical: $X_0$ is the spectrum of the coordinate ring of $X$, and $X_1=X_0 \setminus X$. We define $X\{V\}=X_0\{V\} \setminus X_1\{V\}$.

Suppose that $f \colon V \to W$ is a surjection of vector spaces, and choose a splitting $V=V' \oplus W$. Let $S$ (resp.\ $R$) be the coordinate ring of $X_0$ (resp.\ $X_1$), so that we have a surjection $S \to R$. The ring $S\{W\}$ is obtained from $S\{V\}$ by killing all elements of positive degree for the action of $\GL(V')$, and similarly for $R$. Thus $R\{W\}=S\{W\} \otimes_{S\{V\}} R\{V\}$, and so $X_1\{W\} = X_0\{W\} \times_{X_0\{V\}} X_1\{V\}$. Thus $f$ induces a map $X\{W\} \to X\{V\}$, as required.

Suppose now that $\phi \colon Y \to X$ is a map of quasi-affine $\GL$-varieties. Write $Y=Y_0 \setminus Y_1$ similar to the above. Then $\phi$ induces a map $\phi_0 \colon Y_0 \to X_0$ such that $Y_1$ contains $\phi_0^{-1}(X_1)=Y_0 \times_{X_0} X_1$. It follows that $Y_1\{V\}$ contains $Y_0\{V\} \times_{X_0\{V\}} X_1\{V\}$, since $(-)\{V\}$ is compatible with fiber products. Thus $\phi$ induces $Y\{V\} \to X\{V\}$, as required.
\end{proof}

We note that if $X$ is quasi-affine the $X\{V\}$ need not be functorial for injective linear maps. Indeed, we can have an injection $V \to W$ such that the induced map $\bA^{\ulambda}\{W\} \to \bA^{\ulambda}\{V\}$ maps non-zero points to the origin; thus $X=\bA^{\ulambda} \setminus \{0\}$ is not functorial for injections.

\subsection{Points and orbits} \label{ss:orb}

Let $X$ be an affine $\GL$-variety. In this paper, a \defn{point} of $X$ will always mean a $k$-point. For a point $x \in X$, we let $\ol{O}_x$ be the Zariski closure of its $\GL$-orbit; this is an irreducible closed $\GL$-subvariety of $X$ \cite[Proposition~3.1]{polygeom}. We say that two points $x,y \in X$ belong to the same \defn{generalized orbit} if $\ol{O}_x=\ol{O}_y$. This is an equivalence relation, and we let $O_x$ be the equivalence class (generalized orbit) of $x$. We note that $O_x$ is a dense subset of $\ol{O}_x$, and any $\GL$-stable open subset of $\ol{O}_x$ contains $O_x$ \cite[Proposition~3.4]{polygeom}.

A point $x$ of $X$ is \defn{$\GL$-generic} if it has dense orbit, meaning $\ol{O}_x=X$. If $\ulambda$ is a pure tuple then $\bA^{\ulambda}$ admits a $\GL$-generic point \cite[Proposition~3.13]{polygeom}; in fact, it admits many of them.

\subsection{The decomposition theorem}

One of the main results of \cite{polygeom} is the decomposition theorem, which shows that a $\GL$-variety, or, more generally, a morphism of $\GL$-varieties, can be decomposed into elementary pieces. We now recall the statement. For this, we let $G(m) \subset \GL$ be the subgroup consisting of matrices of the form
\begin{displaymath}
\begin{pmatrix} 1 & 0 \\ 0 & \ast \end{pmatrix}
\end{displaymath}
where the top left block has size $m \times m$. Note that $G(m)$ is isomorphic to $\GL$, so we can speak of $G(m)$-varieties.

An affine $\GL$-variety $X$ is \defn{elementary} if it is isomorphic to one of the form $B \times \bA^{\ulambda}$, where $B$ is an irreducible finite dimensional affine variety $B$ and $\ulambda$ is a pure tuple. A quasi-affine $\GL$-variety $X$ is \defn{locally elementary at $x \in X$} if there exists a $G(m)$-stable open affine neighborhood of $x$ that is elementary as a $G(m)$-variety for some $m$. We say that $X$ is \defn{locally elementary} if it is so at all points. The decomposition theorem for $\GL$-varieties \cite[Corollary~7.9]{polygeom} states that if $X$ is any quasi-affine $\GL$-variety then there exists a decomposition $X=\bigsqcup_{i=1}^n X_i$, where each $X_i$ is a locally closed $\GL$-subvariety of $X$ that is locally elementary.

Suppose now that $\phi \colon Y \to X$ is a morphism of $\GL$-varieties. We say that $\phi$ is \defn{elementary} if there are isomorphisms $Y=C \times \bA^{\umu}$ and $X=B \times \bA^{\ulambda}$, where $B$ and $C$ are irreducible affine varieties and $\ulambda \subset \umu$ are tuples, such that $\phi=\psi \times \pi$, where $\psi \colon C \to B$ is a surjective morphism of varieties and $\pi \colon \bA^{\umu} \to \bA^{\ulambda}$ is the projection map. We say that $\phi$ is \defn{locally elementary} at $y \in Y$ if there exists $m \ge 0$ and $G(m)$-stable open affine neighborhoods $V$ of $y$ and $U$ of $\phi(y)$ such that $\phi$ induces a $G(m)$-elementary map $V \to U$. We say that $\phi$ is \defn{locally elementary} if it is surjective and locally elementary at all $y \in Y$. The decomposition theorem for morphisms \cite[Theorem~7.8]{polygeom} states that there are decompositions $Y=\bigsqcup_{j=1}^m Y_j$ and $X=\bigsqcup_{i=1}^n X_i$, where each $Y_j$ and $X_i$ is locally closed and $\GL$-stable, such that for each $j$ there is some $i$ such that $\phi$ induces a locally elementary map $Y_j \to X_i$.

\subsection{Chevalley's theorem}

Let $X$ be a quasi-affine $\GL$-variety. A subset of $X$ is \defn{$\GL$-constructible} if it is a finite union of locally closed $\GL$-stable subsets. The following is a version of Chevalley's theorem for $\GL$-varieties (see \cite[Theorem~7.13]{polygeom}), and follows easily from the decomposition theorem.

\begin{theorem} \label{thm:chevalley}
Let $\phi \colon Y \to X$ be a morphism of quasi-affine $\GL$-varieties and let $C$ be a $\GL$-constructible subset of $Y$. Then $\phi(C)$ is a $\GL$-constructible subset of $X$.
\end{theorem}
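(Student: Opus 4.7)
The plan is to deduce the theorem directly from the decomposition theorem for morphisms of \cite[Theorem~7.8]{polygeom}, which was recalled just above. The key observation is that the output of that theorem---a partition into locally closed $\GL$-stable pieces on which $\phi$ restricts to locally elementary maps---is exactly the sort of output needed to exhibit $\GL$-constructibility of an image.

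First I would reduce to the case $C = Y$. Writing $C = \bigcup_{\alpha} C_{\alpha}$ as a finite union of locally closed $\GL$-stable subsets, we have $\phi(C) = \bigcup_{\alpha} \phi(C_{\alpha})$, and each $C_{\alpha}$, being locally closed and $\GL$-stable in the quasi-affine $\GL$-variety $Y$, is itself a quasi-affine $\GL$-variety (it is open in its closure in $Y$, which sits inside some affine $\GL$-variety in which $Y$ is open), with $\phi|_{C_{\alpha}} \colon C_{\alpha} \to X$ a morphism of such. It therefore suffices to show that the image of an arbitrary morphism of quasi-affine $\GL$-varieties is $\GL$-constructible.

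Now I apply the decomposition theorem for morphisms to $\phi \colon Y \to X$ to obtain finite decompositions $Y = \bigsqcup_j Y_j$ and $X = \bigsqcup_i X_i$ into locally closed $\GL$-stable subsets such that, for each $j$, the restriction $\phi|_{Y_j}$ induces a locally elementary map $Y_j \to X_{i(j)}$ for some index $i(j)$. Since ``locally elementary'' is defined to include surjectivity, $\phi(Y_j) = X_{i(j)}$, and therefore
\[
\phi(Y) \;=\; \bigcup_{j} \phi(Y_j) \;=\; \bigcup_{j} X_{i(j)},
\]
a finite union of locally closed $\GL$-stable subsets of $X$, i.e., a $\GL$-constructible set.

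There is essentially no serious obstacle: the decomposition theorem does all the work, and the ``Chevalley'' statement is simply a packaging of its conclusion. The only minor technicality is the verification in the reduction step that locally closed $\GL$-stable subsets of quasi-affine $\GL$-varieties are again quasi-affine $\GL$-varieties on which the restricted morphism is still a morphism of $\GL$-varieties, but this is immediate from the definitions.
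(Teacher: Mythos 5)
Your proposal is correct and matches the paper's intended argument: the paper simply cites \cite[Theorem~7.13]{polygeom} and notes that the statement ``follows easily from the decomposition theorem,'' which is exactly what you carry out (restrict $\phi$ to the locally closed $\GL$-stable pieces of $C$, apply the decomposition theorem for morphisms, and use that locally elementary maps are by definition surjective, so each piece of the source maps onto a locally closed $\GL$-stable piece of $X$). Nothing further is needed.
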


\subsection{Typical morphisms} \label{ss:typical}

Let $X$ be an irreducible $\GL$-variety. A \defn{typical morphism} is a dominant morphism $\phi \colon B \times \bA^{\ulambda} \to X$, where $B$ is an irreducible variety and $\ulambda$ is a pure tuple, such that no proper closed $\GL$-subvariety of $B \times \bA^{\ulambda}$ maps dominantly to $X$. A typical morphism always exists \cite[Proposition~8.2]{polygeom}. Moreover, if $\psi \colon C \times \bA^{\umu} \to X$ is a second typical morphism then $\ulambda=\umu$ and $\dim(B)=\dim(C)$ \cite[Corollary~8.5]{polygeom}. We call $\ulambda$ the \defn{type} of $X$ and $\dim(B)$ the \defn{typical dimension} of $X$. We require the following additional information:

\begin{proposition} \label{prop:type-orb}
Let $X$ be an irreducible $\GL$-variety. The following are equivalent:
\begin{enumerate}
\item $X$ is an orbit closure, i.e., $X$ contains a $k$-point with dense orbit.
\item $X$ has typical dimension~0.	
\item There is a dominant map $\bA^{\umu} \to X$ for some pure tuple $\umu$.
\end{enumerate}
If these conditions hold then a typical morphism for $X$ is unique up to isomorphism.
\end{proposition}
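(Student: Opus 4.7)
My plan is to establish the cycle (c) $\Rightarrow$ (a) $\Rightarrow$ (b) $\Rightarrow$ (c), then address uniqueness separately.

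For (c) $\Rightarrow$ (a), given a dominant morphism $\phi \colon \bA^{\umu} \to X$ with $\umu$ pure, I would pick a $\GL$-generic point $v \in \bA^{\umu}$, which exists by the fact recalled in \S\ref{ss:orb}. Equivariance gives $\phi(\GL \cdot v) = \GL \cdot \phi(v)$; since $\GL \cdot v$ is dense in $\bA^{\umu}$ and $\phi$ is continuous and dominant, the orbit $\GL \cdot \phi(v)$ is dense in $X$, so $\phi(v)$ witnesses (a). The implication (b) $\Rightarrow$ (c) is routine: a typical morphism $B \times \bA^{\ulambda} \to X$ with $\dim B = 0$ and $B$ an irreducible reduced $k$-variety has $B = \Spec(k)$, so it is a dominant map $\bA^{\ulambda} \to X$ with $\ulambda$ pure.

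The heart of the argument is (a) $\Rightarrow$ (b). Let $x \in X$ have dense orbit and let $\phi \colon B \times \bA^{\ulambda} \to X$ be a typical morphism. By Chevalley's theorem~\ref{thm:chevalley}, $\phi(B \times \bA^{\ulambda})$ is $\GL$-constructible; being dense, it contains a $\GL$-stable dense open $U \subseteq X$. The property from \S\ref{ss:orb} that $O_x$ lies in every $\GL$-stable open of $\ol{O}_x = X$ gives $x \in U$, so $x = \phi(b, v)$ for some $(b, v)$. The slice $\{b\} \times \bA^{\ulambda}$ is a closed $\GL$-subvariety of $B \times \bA^{\ulambda}$ (since $\GL$ fixes $B$ pointwise), and its image under $\phi$ is $\GL$-stable and contains the dense set $\GL \cdot x$. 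Thus $\phi|_{\{b\} \times \bA^{\ulambda}}$ is dominant, and typicality of $\phi$ forces $\{b\} \times \bA^{\ulambda} = B \times \bA^{\ulambda}$, whence $\dim B = 0$.

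For uniqueness, suppose $\phi_1, \phi_2 \colon \bA^{\ulambda} \to X$ are both typical (the type $\ulambda$ already matches on both sides by the cited Corollary~8.5 of \cite{polygeom}). I would study the reduced fibered product $Z = \bA^{\ulambda} \times_X \bA^{\ulambda} \subseteq \bA^{\ulambda} \times \bA^{\ulambda}$ as a $\GL$-variety and identify an irreducible $\GL$-component $Z_0$ whose two projections $p_1, p_2 \colon Z_0 \to \bA^{\ulambda}$ are both dominant, by applying the density argument of (a)$\Rightarrow$(b) at a $\GL$-generic point of $X$ and its two preimages. Each $p_i$ should then be recognized as a typical morphism onto $\bA^{\ulambda}$, and the matching of type and typical dimension (via Corollary~8.5) would force each $p_i$ to be an isomorphism; the composition $\sigma = p_2 \circ p_1^{-1}$ would be the desired $\GL$-automorphism of $\bA^{\ulambda}$ intertwining $\phi_1$ and $\phi_2$. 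The main obstacle is this final step: the dimension bookkeeping only tells us $p_i$ has the right type, and concluding that a $\GL$-equivariant dominant endomorphism of $\bA^{\ulambda}$ of type $\ulambda$ must be an isomorphism requires a finer structural input on polynomial generators, which I would expect to extract from the decomposition-theoretic machinery of \S\ref{ss:typical}.
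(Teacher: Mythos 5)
Your cycle (c) $\Rightarrow$ (a) $\Rightarrow$ (b) $\Rightarrow$ (c) is essentially the paper's argument and is fine, with one small caveat: when you pass from ``$x$ lies in the $\GL$-stable open subset of the image'' to ``$x=\phi(b,v)$ for some $k$-point $(b,v)$,'' this lift is not tautological in the infinite-dimensional setting (a nonempty fiber of a map of $\GL$-varieties need not visibly contain a $k$-point); the paper invokes \cite[Proposition~7.15]{polygeom} precisely to justify this step, and you should cite something of that kind as well.

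The genuine gap is in the uniqueness part, and you have in fact put your finger on it yourself. Your fibered-product scheme has two problems. First, even granting an irreducible $\GL$-component $Z_0$ of $\bA^{\ulambda}\times_X\bA^{\ulambda}$ with both projections dominant, the maps $p_i\colon Z_0\to\bA^{\ulambda}$ are not typical morphisms in the sense of \S\ref{ss:typical} (their source is not of the form $B\times\bA^{\unu}$), so the type/typical-dimension bookkeeping of \cite[Corollary~8.5]{polygeom} does not directly apply to them; you would have to insert a typical morphism for $Z_0$ and argue further. Second, and more seriously, the step you flag as ``the main obstacle'' --- that a dominant $\GL$-equivariant endomorphism of $\bA^{\ulambda}$ must be an isomorphism --- is a real theorem, not something extractable from dimension counts or from the decomposition machinery as you suggest; it is exactly \cite[Corollary~6.5]{polygeom}. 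The paper's uniqueness argument avoids the fibered product altogether: given two typical morphisms $\phi,\psi\colon\bA^{\umu}\to X$, \cite[Proposition~8.4]{polygeom} produces a dominant $\theta\colon\bA^{\umu}\to\bA^{\umu}$ with $\phi\circ\theta=\psi$, and then \cite[Corollary~6.5]{polygeom} shows $\theta$ is an isomorphism. Without supplying these two inputs (or proofs of them), your uniqueness argument does not close.
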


\begin{proof}
(a) $\Rightarrow$ (b). Let $x \in X$ be a $k$-point with dense orbit, and let $\phi \colon B \times \bA^{\ulambda} \to X$ be a typical morphism. The image of $\phi$ contains a non-empty $\GL$-stable open set by Chevalley's theorem (Theorem~\ref{thm:chevalley}), and this set necessarily contains $x$ \cite[Proposition~3.4]{polygeom}. Let $(b, y)$ be a $k$-point of $B \times \bA^{\ulambda}$ mapping to $x$ (which exists by \cite[Proposition~7.15]{polygeom}). Then $\phi$ restricts to a dominant morphism $\{b\} \times \bA^{\ulambda}$, and so $B=\{b\}$ is zero-dimensional.

(b) $\Rightarrow$ (c). This follows from the definition of typical morphism.

(c) $\Rightarrow$ (a). Since $\umu$ is pure, there is a $k$-point of $\bA^{\umu}$ with dense orbit \cite[Proposition~3.13]{polygeom}, and its image will be such a point in $X$.

Finally, suppose these conditions hold, and let $\phi$ and $\psi$ be two typical morphisms $\bA^{\umu} \to X$. By \cite[Proposition~8.4]{polygeom}, there is a dominant map $\theta \colon \bA^{\umu} \to \bA^{\umu}$ such that $\phi \circ \theta = \psi$. By \cite[Corollary~6.5]{polygeom}, $\theta$ is an isomorphism.
\end{proof}

\section{Tensor spaces} \label{s:ten}

\subsection{Basic definitions} \label{ss:forms}

Let $V$ be a vector space. For a partition $\lambda$, a \defn{$\lambda$-structure} on $V$ is a linear map $\omega \colon \bS_{\lambda}(V) \to k$. For a tuple $\ulambda=[\lambda_1, \ldots, \lambda_r]$, a \defn{$\ulambda$-structure} on $V$ is a tuple $\omega=(\omega_1, \ldots, \omega_r)$ where $\omega_i$ is a $\lambda_i$-structure. A \defn{$\ulambda$-space} is a vector space equipped with a $\ulambda$-structure. We say that a $\ulambda$-space is \defn{infinite}, \defn{countable}, or \defn{finite} if its dimension is infinite, exactly $\aleph_0$, or finite. If $(V, \omega)$ and $(W, \eta)$ are two $\ulambda$-spaces, an \defn{embedding} $\alpha \colon V \to W$ is an injective linear map such that $\alpha^*(\eta_i)=\omega_i$ for each $1 \le i \le r$. In this way, we have a category $\cC_{\ulambda}$ of $\ulambda$-spaces. We let $\cC^{\rf}_{\ulambda}$ be the subcategory of finite dimensional $\ulambda$-spaces. 

We note that giving a $\ulambda$-structure on $\bV$ is exactly the same as giving a point of $\bA^{\ulambda}$; in other words, $\bA^{\ulambda}$ is the space of $\ulambda$-structures on $\bV$. This is the essential connection between tensor spaces and $\GL$-varieties.

\begin{remark} \label{rmk:pure-cat}
If $\lambda=\emptyset$ is the empty partition then a $\lambda$-structure on a vector space is a linear map from $k$ to $k$, which we'll think of as simply a number. If $(V, a)$ and $(W, b)$ are two $\lambda$-spaces then an embedding $V \to W$ exists if and only if $a=b$ and $\dim(V) \le \dim(W)$. Thus $\cC_{\lambda}$ is equivalent to $k \times \cC_{\emptyset}$, where $k$ is regarded as a discrete category and $\cC_{\emptyset}$ is the category of vector spaces and injective linear maps. On the other hand, if $\ulambda$ is a pure tuple then there is a unique $\ulambda$-structure on the zero space, and for any $\ulambda$-space $V$ there is a unique embedding $0 \to V$, i.e., 0 is the initial object of $\cC_{\ulambda}$. This explains, in part, why pure tuples are special.
\end{remark}

\subsection{The ideal of a tensor space} \label{ss:ideal}

Let $(V, \omega)$ be a $\ulambda$-space, and recall $R_{\ulambda}=\Sym(\bS_{\ulambda}(\bV))$. An element $f \in R_{\ulambda}\{k^n\}$ defines a function on $V^n$. Roughly speaking, this function will be a polynomial expression in the $\omega_i$'s evaluated on various subsets of the components of $V^n$. Formally, given $v_1, \ldots, v_n$, we define $f(v_1, \ldots, v_n)$ to be the image of $f$ under the ring homomorphism
\begin{displaymath}
R_{\ulambda}\{k^n\} \to R_{\ulambda}(V) \to k
\end{displaymath}
where the first map is induced by the linear map $k^n \to V$ defined by the $v_i$'s, and the second map is induced from the $\omega_i$'s. See the examples below to get a better idea of how this works. Taking a limit of this construction, we may regard $f \in R_{\ulambda}$ as a function on $V^{\infty}$. Note that $f$ belongs to $R_{\ulambda}\{k^n\}$ for some $n$, and the resulting function will depend on only the first $n$ components of $V^{\infty}$.

We are now able to introduce a concept that is fundamental to this paper:

\begin{definition}
For a $\ulambda$-space $V$, we define $\fI(V)$ to be the set of all functions $f \in R_{\ulambda}$ that vanish identically on $V^{\infty}$.
\end{definition}

Informally, one should think of $\fI(V)$ as the ideal of additional relations the $\ulambda$-structure on $V$ satisfies. One easily sees that $\fI(V)$ is a $\GL$-stable radical ideal of $R_{\ulambda}$; we will see below (Corollary~\ref{cor:I-prime}) that it is in fact a prime ideal.  We now look at some examples.

\begin{example}
Suppose $\ulambda=[(1),(1)]$. Then a $\ulambda$-space is a vector space $V$ equipped with two linear functions $\lambda_1, \lambda_2 \in V^*$. For an integer $i \ge 1$, we have functions on $V^{\infty}$ defined by $v_{\bullet} \mapsto \lambda_1(v_i)$ and $v_{\bullet} \mapsto \lambda_2(v_i)$. The functions defined by $R_{\ulambda}$ are exactly the polynomial expressions in these basic functions. The ideal $\fI(V)$ can thus detect all linear dependencies between $\lambda_1$ and $\lambda_2$, which is enough to determine $V$ up to isomorphism.
\end{example}

\begin{example}
Suppose $\ulambda=[(2)]$. Then a $\ulambda$-space is a vector space $V$ equipped with a symmetric bilinear form $\omega$. For integers $i, j \ge 1$, we have a function on $V^{\infty}$ defined by $v_{\bullet} \mapsto \omega(v_i, v_j)$. The functions defined by $R_{\ulambda}$ are exactly the polynomial expressions in these basic functions. Consider the function $f_n \in R_{\ulambda}$ given by
\begin{displaymath}
f_n(v_{\bullet})=\det(\omega(x_i,x_j))_{1 \le i,j \le n}.
\end{displaymath}
Then $f_n \in \fI(V)$ if and only if $\omega$ has rank $<n$. Thus $\fI(V)$ determines the rank of $V$, considered as an element of $\bN \cup \{\infty\}$. In fact, this is exactly the information contained in $\fI(V)$, if $V \ne 0$. This is not enough to determine the isomorphism type of $V$, as $\fI(V)$ does not detect the dimension of the radical of the form.
\end{example}

We now prove two simple properties of $\fI(V)$.

\begin{proposition} \label{prop:ideal-age}
Let $V$ be a $\ulambda$-space. Then $\fI(V) = \bigcap_{W \subset V} \fI(W)$, where the intersection is taken over all finite dimensional subspaces $W$ of $V$, with the induced $\ulambda$-structure.
\end{proposition}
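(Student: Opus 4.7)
The plan is to prove both inclusions, of which the forward one $\fI(V) \subseteq \bigcap_W \fI(W)$ is essentially formal. Given a finite dimensional subspace $W \subseteq V$ with its restricted $\ulambda$-structure, the inclusion $\iota \colon W \hookrightarrow V$ is an embedding of $\ulambda$-spaces. The point to verify is that, for any $f \in R_{\ulambda}$ (say $f \in R_{\ulambda}\{k^n\}$) and any $(w_1,\dots,w_n) \in W^n$, the two evaluations of $f$ obtained by mapping $k^n \to W$ via the $w_i$'s and then applying the $W$-structure, versus mapping $k^n \to W \hookrightarrow V$ and then applying the $V$-structure, agree. This is immediate from the definition of evaluation in \S\ref{ss:ideal}: both evaluations factor through the same composite $R_\ulambda\{k^n\} \to R_\ulambda\{V\} \to k$ because $\iota^*$ intertwines the two $\ulambda$-structures. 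Hence vanishing on $V^\infty$ forces vanishing on $W^\infty$.

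For the reverse inclusion, suppose $f \in R_{\ulambda}$ lies in $\fI(W)$ for every finite dimensional $W \subseteq V$; I want to show $f(v_\bullet) = 0$ for every $v_\bullet \in V^\infty$. The key finite-support fact is that since $\bV = \bigcup_n k^n$ and Schur functors commute with filtered colimits, we have $R_\ulambda = \bigcup_n R_\ulambda\{k^n\}$, so $f \in R_\ulambda\{k^n\}$ for some $n$. As noted in \S\ref{ss:ideal}, this means $f(v_\bullet)$ depends only on $v_1,\dots,v_n$. Taking $W := \mathrm{span}(v_1,\dots,v_n) \subseteq V$, which is finite dimensional, the compatibility from the first paragraph identifies $f(v_\bullet)$ with the value of $f$ on the sequence $(v_1,\dots,v_n,0,0,\dots) \in W^\infty$; the latter is zero by hypothesis.

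I do not expect any serious obstacle here: the proposition is essentially unpacking the definition of $\fI$ together with the ``bounded arity'' property that any $f \in R_\ulambda$ lies in some $R_\ulambda\{k^n\}$. The only content that needs to be stated carefully is the compatibility of evaluation of $f$ with respect to subspace inclusions, which is a direct consequence of the naturality of the construction $M \mapsto M\{V\}$ applied to the embedding $W \hookrightarrow V$.
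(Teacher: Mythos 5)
Your argument is correct and is the same one the paper uses, just spelled out in more detail: the paper's proof is exactly the observation that any $f \in R_{\ulambda}$ depends on only finitely many vectors, so it vanishes on $V^{\infty}$ if and only if it vanishes on $W^{\infty}$ for all finite dimensional $W \subseteq V$. Your added care about compatibility of evaluation with the inclusion $W \hookrightarrow V$ is fine but not a new route.
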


\begin{proof}
An element of $R_{\ulambda}$ only depends on finitely many vectors, so it vanishes on $V$ if and only if it vanishes on all finite dimensional subspaces of $V$.
\end{proof}

\begin{proposition} \label{prop:count-ideal}
Let $V$ be a $\ulambda$-space. Then there exists a countable $\ulambda$-space $V_0$ such that $\fI(V)=\fI(V_0)$. If $V$ is infinite, we can take $V_0$ to be a subspace of $V$.
\end{proposition}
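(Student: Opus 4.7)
The plan is to translate the problem into classical finite-dimensional algebraic geometry, where noetherianity is freely available. For each $n \geq 1$, let $\rho_n \colon V^n \to \bA^{\ulambda}\{k^n\}$ denote the polynomial evaluation map sending $(v_1,\ldots,v_n)$ to the pullback of the $\ulambda$-structure $\omega$ along the linear map $k^n \to V$ determined by the $v_i$'s. Unwinding the definition of evaluation in \S\ref{ss:ideal}, one has $f(v_\bullet) = f(\rho_n(v_\bullet))$ for any $f \in R_\ulambda\{k^n\}$. Since $R_\ulambda\{k^n\}$ is the coordinate ring of the finite-dimensional affine variety $\bA^\ulambda\{k^n\}$, this gives
\[
\fI(V) \cap R_\ulambda\{k^n\} \;=\; I(X_n), \qquad X_n := \overline{\rho_n(V^n)} \subseteq \bA^\ulambda\{k^n\}.
\]
Via $R_\ulambda = \bigcup_n R_\ulambda\{k^n\}$, the target equality $\fI(V_0) = \fI(V)$ for a subspace $V_0 \subseteq V$ is therefore equivalent to the density condition $\overline{\rho_n(V_0^n)} = X_n$ for every $n$.

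The only substantive step uses that each $X_n$ is a noetherian topological space, being a subvariety of a finite-dimensional affine space. For every finite-dimensional subspace $W \subseteq V$, the restriction $\rho_n|_{W^n}$ is a morphism of finite-dimensional varieties, so by Chevalley its image is constructible in $X_n$ with closure a subvariety $Y_W^{(n)} \subseteq X_n$. The collection $\{Y_W^{(n)}\}_{W}$ is a directed system of closed subsets of the noetherian space $X_n$, hence admits a maximum $Y_{W_n}^{(n)}$ for some finite-dimensional $W_n \subseteq V$. Since $\rho_n(V^n) = \bigcup_W \rho_n(W^n) \subseteq Y_{W_n}^{(n)}$, taking closures gives $X_n = Y_{W_n}^{(n)}$; that is, a single finite-dimensional $W_n \subseteq V$ already has $\overline{\rho_n(W_n^n)} = X_n$.

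Now set $V_0 = \sum_{n \geq 1} W_n$, an at most $\aleph_0$-dimensional subspace of $V$; by construction $\overline{\rho_n(V_0^n)} \supseteq \overline{\rho_n(W_n^n)} = X_n$ for every $n$, so $\fI(V_0) = \fI(V)$. To match the precise cardinality demands: if $V$ is infinite but the $V_0$ just built happens to be finite-dimensional, enlarge it arbitrarily inside $V$ to dimension $\aleph_0$ (this can only shrink $\fI(V_0)$, and the sandwich $\fI(V) \subseteq \fI(V_0') \subseteq \fI(V_0) = \fI(V)$ keeps the ideal pinned); if instead $V$ itself is finite-dimensional, replace $V_0$ by $V_0 \oplus U$, where $U$ is an $\aleph_0$-dimensional vector space equipped with the zero $\ulambda$-structure, and use that every $f \in R_\ulambda$ factors through the projection $V_0 \oplus U \to V_0$. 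The one real ingredient is the noetherianity of the classical variety $\bA^\ulambda\{k^n\}$; everything else is formal bookkeeping around the definition of $\fI(V)$.
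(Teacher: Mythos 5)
Your proof is essentially correct and reaches the result by a genuinely different finiteness mechanism than the paper. Both arguments share the same skeleton (reduce to level $n$, find a finite-dimensional witness $W_n\subseteq V$, take $V_0=\sum_n W_n$, then adjust cardinality), but the paper never leaves linear algebra: it fixes both $n$ and a degree $d$, observes that $R_{\ulambda}(k^n)_d$ is a finite-dimensional vector space, and runs the directedness argument on the subspaces $\fI(W)\cap R_{\ulambda}(k^n)_d$, producing a witness $W_{n,d}$ for each pair $(n,d)$. You instead dualize geometrically, identifying $\fI(V)\cap R_{\ulambda}\{k^n\}$ with the vanishing ideal of $X_n=\overline{\rho_n(V^n)}$ and invoking dimension/noetherianity of the finite-dimensional variety $\bA^{\ulambda}\{k^n\}$ to get a single $W_n$ handling all degrees at once. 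Your route is a bit slicker per $n$ and makes the geometric content visible (it is the level-$n$ shadow of $\ol O_V$); the paper's route is more elementary, needing no Chevalley, no irreducibility, and no dimension theory. Your treatment of the finite-dimensional case (pulling the structure back along the split projection $V_0\oplus U\to V_0$) matches the paper's (pullback along a surjection from a countable space).

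One step is misjustified as written: a directed family of closed subsets of a noetherian space need \emph{not} have a maximum (in $\bA^1$ the finite subsets form a directed family of closed sets with no maximum), so ``noetherian, hence admits a maximum'' is not a valid inference. The claim is nevertheless true here and easily repaired: each $Y^{(n)}_W=\overline{\rho_n(W^n)}$ is \emph{irreducible}, being the closure of the image of the irreducible variety $W^n$, and a proper closed subset of an irreducible finite-type $k$-variety has strictly smaller dimension; hence among the $Y^{(n)}_W$ there is one of maximal dimension, and directedness forces it to contain every other member, so it is the maximum. (Alternatively you could sidestep the issue entirely by running the paper's argument on the finite-dimensional graded pieces $R_{\ulambda}\{k^n\}_d$.) With that one line added, your proof is complete.
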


\begin{proof}
First suppose that $V$ is infinite. If $W$ is any subspace of $V$ then obviously $\fI(V) \subset \fI(W)$. Moreover, if $f \not\in \fI(V)$, then there is a tuple $(v_1, \ldots, v_n)$ witnessing this, i.e., $f(v_1, \ldots, v_n) \ne 0$, and if we let $W'=W+\operatorname{span}(v_1, \ldots, v_n)$ then $f \not\in \fI(W')$. Now, for any $n$ and $d$, the degree $d$ piece $R_{\ulambda}(k^n)_d$ of the ring $R_{\ulambda}(k^n)$ is finite dimensional. It follows from the above observation that we can find a finite dimensional subspace $W_{n,d}$ of $V$ such that
\begin{displaymath}
\fI(V) \cap R_{\ulambda}(k^n)_d = \fI(W_{n,d}) \cap R_{\ulambda}(k^n)_d.
\end{displaymath}
We can now take $V_0=\sum_{n,d} W_{n,d}$. (Note that $\fI$ is always a homogeneous ideal, since every polynomial representation is graded.)

Now suppose that $V$ is finite. Let $V_0$ be a countable vector space, choose a surjection $V_0 \to V$, and pull-back the $\ulambda$-structure on $V$ to $V_0$. One easily verifies $\fI(V_0)=\fI(V)$, as required.
\end{proof}

\subsection{The generalized orbit of a tensor space} \label{ss:tenorb}

Let $V$ be a $\ulambda$-space. We have just defined an ideal $\fI(V)$ of $R_{\ulambda}$ associated to $V$. Since $R_{\ulambda}$ is the coordinate ring of $\bA^{\ulambda}$, we can convert this ideal into a geometric object: we define $\ol{O}_V$ to be the vanishing locus of $\fI(V)$ inside of $\bA^{\ulambda}$. Thus, by definition, $\ol{O}_V$ is a closed $\GL$-subvariety of $\bA^{\ulambda}$.

\begin{proposition}
$\ol{O}_V$ is an orbit closure, i.e., of the form $\ol{O}_x$ for some point $x$ of $\bA^{\ulambda}$.
\end{proposition}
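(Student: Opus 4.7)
The plan is to reduce to the countable case, exhibit a natural candidate $x$ for which I expect $\ol{O}_V = \ol{O}_x$, and verify the two inclusions separately. First I would invoke Proposition~\ref{prop:count-ideal} to replace $V$ by a countable $\ulambda$-space with the same ideal, so we may assume $\dim V = \aleph_0$. Then I would fix a linear isomorphism $\phi \colon \bV \to V$ and let $x \in \bA^{\ulambda}$ be the point corresponding to the $\ulambda$-structure $\phi^*\omega$ on $\bV$.

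For the inclusion $\ol{O}_x \subset \ol{O}_V$, it suffices to show $x \in \ol{O}_V$, since $\GL$-stability and closedness of $\ol{O}_V$ then force $\ol{O}_x \subset \ol{O}_V$. With the basis $v_i := \phi(e_i)$ of $V$, a direct unwinding of the evaluation procedure of \S\ref{ss:ideal} gives $f(v_1, v_2, \ldots) = f(x)$ for every $f \in R_{\ulambda}$, so any $f \in \fI(V)$ vanishes at $x$.

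The harder inclusion $\ol{O}_V \subset \ol{O}_x$ amounts to showing $I(\ol{O}_x) \subset \fI(V)$, and I would prove it by a one-parameter perturbation argument. Fix $f \in I(\ol{O}_x) \cap R_{\ulambda}\{k^n\}$ and an arbitrary tuple $(v_1, \ldots, v_n) \in V^n$. If the $v_i$ are linearly independent, I can extend them to a basis of $V$ (possible since $\dim V = \aleph_0$); this basis corresponds to some iso $\psi \colon \bV \to V$ with $\psi^*\omega$ in the $\GL$-orbit of $x$, and since $f$ only sees the first $n$ basis vectors, $f(v_1, \ldots, v_n) = f(\psi^*\omega) = 0$. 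For a general tuple, I would pick linearly independent $w_1, \ldots, w_n$ inside a complement of $\operatorname{span}(v_1, \ldots, v_n)$ in $V$, so that $(v_i + tw_i)$ is linearly independent for every $t \ne 0$. The previous case makes the polynomial $t \mapsto f(v_1 + tw_1, \ldots, v_n + tw_n)$ vanish at every nonzero $t$, hence identically, and evaluating at $t = 0$ yields $f(v_1, \ldots, v_n) = 0$.

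The main thing to verify is that the perturbed evaluation really is polynomial in $t$; this reduces to the fact that the functorial map $R_{\ulambda}(\rho)$ depends polynomially on $\rho \in \Hom(k^n, V)$, which is the defining feature of polynomial functors. With this in hand, the two inclusions combine to give $\ol{O}_V = \ol{O}_x$, so the constructed $x$ witnesses $\ol{O}_V$ as an orbit closure.
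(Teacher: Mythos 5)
Your proposal is correct and follows essentially the same route as the paper: reduce to the countable case via Proposition~\ref{prop:count-ideal}, transport $\omega$ to a point $x$ of $\bA^{\ulambda}$, and identify $\fI(V)$ with the vanishing ideal of $\ol{O}_x$ using that each $f$ depends on only finitely many arguments. Your one-parameter perturbation simply makes explicit the paper's appeal to the Zariski density of linearly independent tuples in $V^n$.
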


\begin{proof}
First suppose that $V$ is countable. We may as well assume $V=(\bV, \omega)$ for some $\ulambda$-structure $\omega$ on $\bV$. Let $f \in R_{\ulambda}$. We can regard $f$ as a function on $\bA^{\ulambda}$ and $\omega$ as a point of $\bA^{\ulambda}$, and evaluate $f$ at $\omega$ to get a number $f(\omega) \in k$. Alternatively, we can regard $f$ as a function on $V^{\infty}$ via the construction in \S \ref{ss:ideal}. These two constructs are related as follows:
\begin{displaymath}
f(\omega)=f(e_1, e_2, \ldots),
\end{displaymath}
where $e_i$ is the standard basis of $V$. Acting by $\GL$, we see that $f$ vanishes on the orbit of $\omega$ if and only $f(v_1, v_2, \ldots)=0$ whenever $v_1, v_2, \ldots$ is a basis of $\bV$. Since $f$ only depends on its first $n$ arguments (for some $n$), and the linear independent tuples in $V^n$ are Zariski dense (in every finite dimensional subspace), the latter condition is equivalent to $f$ vanishing identically on $V^{\infty}$. We thus see that $\fI(V)$ coincides with the ideal of $\ol{O}_{\omega}$, and so $\ol{O}_V=\ol{O}_{\omega}$, as required.

In general, there is a countable $\ulambda$-space $V_0$ with $\fI(V_0)=\fI(V)$ (Proposition~\ref{prop:count-ideal}), and so $\ol{O}_V=\ol{O}_{V_0}$ is an orbit closure.
\end{proof}

\begin{corollary} \label{cor:I-prime}
$\fI(V)$ is a prime ideal.
\end{corollary}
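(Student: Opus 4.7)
The plan is very short because essentially all the work has already been done in the preceding proposition. By construction $\fI(V)$ is the defining (radical) ideal of the closed subvariety $\ol{O}_V \subset \bA^{\ulambda}$, and the preceding proposition identifies $\ol{O}_V$ with an orbit closure $\ol{O}_x$. By the discussion in \S\ref{ss:orb} (invoking \cite[Proposition~3.1]{polygeom}), every orbit closure in a $\GL$-variety is an irreducible closed subvariety. The ideal cutting out an irreducible closed subscheme of the reduced scheme $\bA^{\ulambda}=\Spec(R_{\ulambda})$ is prime, so $\fI(V)$ is prime.

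So the entire proof would read: ``By the previous proposition, $\ol{O}_V = \ol{O}_x$ for some $x \in \bA^{\ulambda}$, and orbit closures are irreducible. Hence $\fI(V)$, the ideal of $\ol{O}_V$ in $R_{\ulambda}$, is prime.'' No serious obstacle is expected; the only thing to double-check is the irreducibility statement for orbit closures, which is already quoted from \cite{polygeom} in \S\ref{ss:orb}, and the standard algebra-geometry dictionary that an irreducible closed subvariety corresponds to a prime ideal.
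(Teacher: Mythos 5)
Your proposal is correct and follows essentially the same route as the paper: the paper's proof likewise notes that $\fI(V)$ is radical and that $\ol{O}_V$ is irreducible by the preceding proposition together with \cite[Proposition~3.1]{polygeom}, whence primality. The one point worth making explicit is that you are using the (already observed) fact that $\fI(V)$ is radical, so that it really is the full ideal of $\ol{O}_V$ rather than merely an ideal with that vanishing locus.
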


\begin{proof}
The ideal $\fI(V)$ is radical and $\ol{O}_V$ is irreducible \cite[Proposition~3.1]{polygeom}.
\end{proof}

We define $O_V$ to be the unique dense generalized orbit in $\ol{O}_V$; this makes sense due to the above proposition. We now introduce an important concept:

\begin{definition}
Two $\ulambda$-spaces $V$ and $W$ are \defn{Zariski equivalent} if $O_V=O_W$.
\end{definition}

Alternatively, $V$ and $W$ are Zariski equivalent if $\fI(V)=\fI(W)$. More generally, we say that $V$ is a \emph{Zariski specialization} of $W$ if $\fI(W) \subset \fI(V)$, or, equivalently, $\ol{O}_V \subset \ol{O}_W$. If $W \to V$ is an embedding then $\fI(V) \subset \fI(W)$, and so $W$ is a Zariski specialization of $V$. In particular, if $V$ and $W$ are isogenous then they are Zariski equivalent.

\subsection{Categories associated to subvarieties}

Let $C$ be a $\GL$-constructible set in $\bA^{\ulambda}$. We define $\cC_{\ulambda}(C)$ to be the full subcategory of $\cC_{\ulambda}$ spanned by $\lambda$-spaces $V$ for which $O_V \subset C$. Alternatively, for any vector space $V$, we have a subset $C\{V\}$ of $\bA^{\umu}\{V\}$, and a tensor space $(V, \omega)$ belongs $\cC_{\umu}(C)$ if and only if $\omega \in C\{V\}$.

We note one simple property of these categories here:

\begin{proposition} \label{prop:closed-cat}
We have the following:
\begin{enumerate}
\item If $C$ is a closed subset of $\bA^{\ulambda}$ then $\cC_{\ulambda}(C)$ is downwards closed, i.e., if $W \to V$ is an embedding of $\ulambda$-spaces and $V \in \cC_{\ulambda}(C)$ then $W \in \cC_{\ulambda}(C)$.
\item If $C$ is an open subset of $\bA^{\ulambda}$ then $\cC_{\ulambda}(C)$ is upwards closed, i.e., if $W \to V$ is an embedding of $\ulambda$-spaces and $W \in \cC_{\ulambda}(C)$ then $V \in \cC_{\ulambda}(C)$.
\end{enumerate}
\end{proposition}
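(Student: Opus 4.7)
The plan is to reduce both parts of the proposition to two facts already recorded in the excerpt: \textbf{(i)} any embedding $W \hookrightarrow V$ of $\ulambda$-spaces yields a Zariski specialization $\ol{O}_W \subset \ol{O}_V$ (noted immediately before the proposition, since an embedding gives $\fI(V) \subset \fI(W)$); and \textbf{(ii)} the generalized orbit $O_V$ is dense in $\ol{O}_V$, and every non-empty $\GL$-stable open subset of $\ol{O}_V$ contains $O_V$ (recorded in \S\ref{ss:orb}). A preliminary observation is that any $\GL$-constructible set is $\GL$-stable, which is immediate from its definition as a union of $\GL$-stable locally closed pieces.

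For part (a), given $V \in \cC_{\ulambda}(C)$ and an embedding $W \hookrightarrow V$, I would begin from $O_V \subset C$. Since $O_V$ is dense in $\ol{O}_V$ by (ii) and $C$ is closed, taking closures yields $\ol{O}_V \subset C$. Then (i) immediately gives $O_W \subset \ol{O}_W \subset \ol{O}_V \subset C$, so $W \in \cC_{\ulambda}(C)$ as required.

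For part (b), given $W \in \cC_{\ulambda}(C)$ and an embedding $W \hookrightarrow V$, I would consider the subset $U := C \cap \ol{O}_V$ of $\ol{O}_V$. Fact (i) gives $O_W \subset \ol{O}_V$, and by hypothesis $O_W \subset C$, so $U$ contains $O_W$ and in particular is non-empty. Since $C$ is open and $\GL$-stable, $U$ is a non-empty $\GL$-stable open subset of $\ol{O}_V$, so (ii) forces $O_V \subset U \subset C$, and hence $V \in \cC_{\ulambda}(C)$.

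There is no serious obstacle here --- the proposition is essentially topological bookkeeping built on top of the already-noted embedding-to-specialization relation together with the density/open-set properties of generalized orbits. The only point worth flagging is the use of $\GL$-stability of $C$ in part (b), but this is automatic from $C$ being $\GL$-constructible.
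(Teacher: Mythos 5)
Your argument is correct and is essentially the paper's proof in geometric language: the paper characterizes membership in $\cC_{\ulambda}(C)$ by ideal containments ($\fa \subset \fI(V)$ for $C$ closed, $\fb \not\subset \fI(V)$ for $C$ open) and then invokes $\fI(V) \subset \fI(W)$ for an embedding $W \to V$, which is exactly your combination of the specialization fact $\ol{O}_W \subset \ol{O}_V$ with the density and open-set properties of generalized orbits. Your explicit remark that $\GL$-constructible sets are $\GL$-stable is a fine (and implicitly needed) bookkeeping point; nothing is missing.
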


\begin{proof}
(a) Let $\fa$ be the ideal of $C$. Then $V \in \cC_{\ulambda}(C)$ if and only if $\fa \subset \fI(V)$, and so the result follows. (b) Let $\fb$ be the ideal of $\bA^{\ulambda} \setminus C$. Then $V \in \cC_{\ulambda}(C)$ if and only if $\fb \not\subset \fI(V)$, and so the result follows.
\end{proof}

\subsection{Universality} \label{ss:univ}

We say that a $\ulambda$-space $V$ is \defn{universal} if every finite dimensional $\ulambda$-space embeds into it. By Remark~\ref{rmk:pure-cat}, a universal space can exist only if $\ulambda$ is pure; by \cite[Theorem~3.2]{homoten}, if $\ulambda$ is pure then a universal space does exist. We are interested in the following characterization of universal spaces:

\begin{theorem} \label{thm:univ}
A countable $\ulambda$-space $V$ is universal if and only if $O_V$ is Zariski dense.
\end{theorem}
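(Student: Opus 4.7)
The plan is to split into the easy forward direction and the substantive converse (which is essentially the Kazhdan--Ziegler / BDDE content).

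For the forward direction I would show that universality forces $\fI(V)=0$. Take $f\in\fI(V)\cap R_{\ulambda}\{k^n\}$; for each $\ulambda$-structure $\eta$ on $k^n$, universality of $V$ yields an embedding $\alpha\colon(k^n,\eta)\hookrightarrow V$, and writing $v_i=\alpha(e_i)$ the identity $\alpha^*\omega=\eta$ forces the value of $f$ at the point $\eta\in\bA^{\ulambda}\{k^n\}$ to equal $f(v_1,\dots,v_n)$ computed via $V$'s structure. The latter vanishes since $f\in\fI(V)$, so $f$ vanishes at every $k$-point of the finite-dimensional affine $k$-variety $\bA^{\ulambda}\{k^n\}$; as $k$ is algebraically closed (and infinite), $f=0$. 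Hence $\fI(V)=0$ and $O_V$ is dense.

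For the converse, if $\ulambda$ is not pure both conditions fail vacuously (the $\emptyset$-coordinates of $\bA^{\ulambda}$ are $\GL$-invariant, cf.\ Remark~\ref{rmk:pure-cat}), so I would assume $\ulambda$ pure, invoke the existence of a universal $\ulambda$-space $U=(\bV,\omega_U)$ from \cite[Theorem~3.2]{homoten}, and note that $\omega_U$ has dense orbit by the forward direction. For a finite-dimensional $(W,\eta)$ with $m=\dim W$, set
\[
E_{W,\eta}=\{\omega\in\bA^{\ulambda}:(W,\eta)\hookrightarrow(\bV,\omega)\},
\]
which is visibly $\GL$-stable. The crux is to prove $E_{W,\eta}$ is $\GL$-constructible. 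I would proceed by writing $E_{W,\eta}=\GL\cdot\pi^{-1}(E^m)$, where $\pi\colon\bA^{\ulambda}\to\bA^{\ulambda}\{k^m\}$ is the restriction morphism coming from $k^m\subset\bV$ and $E^m\subset\bA^{\ulambda}\{k^m\}$ is the classical constructible set of $\ulambda$-structures on $k^m$ that contain a copy of $(W,\eta)$ (obtained from classical Chevalley applied to the projection off $\mathrm{Iso}(W,k^m)\times\bA^{\ulambda}\{k^m\}$), and then realizing this $\GL$-saturation as the image of a morphism of $\GL$-varieties so as to invoke Theorem~\ref{thm:chevalley}.

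With $\GL$-constructibility in hand, the remainder is formal. Since $\omega_U\in E_{W,\eta}$ and $E_{W,\eta}$ is $\GL$-stable, $E_{W,\eta}$ contains the dense generalized orbit $O_{\omega_U}$; a $\GL$-constructible dense subset of the irreducible $\bA^{\ulambda}$ contains a nonempty $\GL$-stable open (decompose into $\GL$-stable locally closed strata; one stratum has closure $\bA^{\ulambda}$ and must therefore be open); and any $\omega$ with $\ol O_\omega=\bA^{\ulambda}$ lies in every nonempty $\GL$-stable open by \cite[Proposition~3.4]{polygeom}. Writing $V=(\bV,\omega)$ (possible because $V$ is countable), we conclude $\omega\in E_{W,\eta}$ for every finite-dimensional $(W,\eta)$, so $V$ is universal. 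The main obstacle is the $\GL$-constructibility of $E_{W,\eta}$: the natural parameter space $\bV^m$ for injections $W\to\bV$ does not itself fit into the polynomial-functor formalism underpinning $\GL$-varieties (its dual is not a polynomial representation), so realizing the $\GL$-saturation as the image of a morphism of $\GL$-varieties requires genuine care---this is where the substantive content of \cite{KaZ2, BDDE} sits, and the $\GL$-variety machinery of \cite{polygeom} serves to reorganize rather than bypass it.
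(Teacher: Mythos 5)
Your forward direction is correct, and the formal skeleton of your converse is also fine: a dense $\GL$-constructible subset of the irreducible $\bA^{\ulambda}$ contains a nonempty $\GL$-stable open, and any point with dense orbit lies in every such open by \cite[Proposition~3.4]{polygeom}. But there is a genuine gap exactly at the step you flag: the $\GL$-constructibility of $E_{W,\eta}=\GL\cdot\pi^{-1}(E^m)$ is never established, and none of the tools you invoke can deliver it. Theorem~\ref{thm:chevalley} applies to images of $\GL$-constructible sets under morphisms of quasi-affine $\GL$-varieties; here $\pi^{-1}(E^m)$ is not $\GL$-stable, and the saturation $\GL\cdot(-)$ is not exhibited as the image of any morphism of $\GL$-varieties. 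As you yourself observe, the natural parameter space of maps $W\to\bV$ has coordinate ring $\Sym((\bV^*)^{\oplus m})$, which is not a polynomial representation, so it is not a $\GL$-variety and $\GL$-Chevalley cannot be applied to it. Nor is there any general principle that $\GL$-saturations of constructible sets are $\GL$-constructible; the poor behavior of actual orbits (as opposed to orbit closures and generalized orbits) is precisely why \cite{polygeom} avoids them. So the substantive content of the theorem---upgrading ``the restrictions of the translates $g\omega$ to $k^m$ are dense in $\bA^{\ulambda}\{k^m\}$'' to ``they exhaust all $k$-points''---remains unproved in your sketch.

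Note also that the paper does not reprove this statement at all: its proof is the citation \cite[Corollary~2.6.3]{BDDE}, and that result (building on \cite{KaZ2}) is proved there by genuinely different means, via strength and infinite-strength arguments rather than a constructibility-plus-genericity formalism. Be careful, too, about circularity if you try to complete your argument with tools from this paper: the constructible description of which finite spaces embed into a given infinite one (Theorem~\ref{thm:typical-age} and the set $\uage(V)$) is itself deduced from Theorem~\ref{thm:univ}, so it cannot be used to supply the missing constructibility of $E_{W,\eta}$. The honest options are to cite \cite{BDDE} as the paper does, or to reproduce the Kazhdan--Ziegler/BDDE argument in full.
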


\begin{proof}
This is \cite[Corollary~2.6.3]{BDDE}.
\end{proof}

The theorem shows that the concept of universality interacts very nicely with the algebraic geometry of tensor spaces. In particular, we have the following corollary:

\begin{corollary} \label{cor:univ-zar}
Suppose $\ulambda$ is pure. Then the universal $\ulambda$-spaces form a single Zariski class.
\end{corollary}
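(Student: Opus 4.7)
The plan is to show that the ideal $\fI(V)$ takes the same value for every universal $\ulambda$-space $V$. Since two $\ulambda$-spaces are Zariski equivalent exactly when their ideals coincide, this will prove the corollary.

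First I would apply Proposition~\ref{prop:ideal-age} to write
\[ \fI(V) = \bigcap_{W \subset V} \fI(W), \]
the intersection ranging over finite-dimensional subspaces $W$ of $V$ with the induced $\ulambda$-structure. Next I would invoke the definition of universality: every isomorphism class of finite-dimensional $\ulambda$-space is realized as such a subspace of $V$, while conversely every finite-dimensional subspace of $V$ is itself a finite-dimensional $\ulambda$-space. Since $\fI(W)$ depends only on the isomorphism class of $W$, the intersection above rewrites as
\[ \fI(V) = \bigcap_{[W]} \fI(W), \]
where $[W]$ now ranges over all isomorphism classes of finite-dimensional $\ulambda$-spaces. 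The right-hand side depends only on $\ulambda$ and not on the particular universal $V$. Consequently, all universal $\ulambda$-spaces share the same $\fI$, hence the same $\ol{O}_V$ and $O_V$, and therefore lie in a single Zariski class.

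There is essentially no obstacle here: the argument follows directly from Proposition~\ref{prop:ideal-age} and the definition of universality. The purity hypothesis on $\ulambda$ is only needed in order to know that the class of universal spaces is nonempty, which is guaranteed by \cite[Theorem~3.2]{homoten}. If one wishes to identify this Zariski class explicitly, Theorem~\ref{thm:univ} applied to any countable universal $\ulambda$-space shows that its orbit closure is all of $\bA^{\ulambda}$, so the common value of $\fI$ on universal spaces is zero and the Zariski class in question is the generic orbit.
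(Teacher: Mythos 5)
Your argument is correct. It is worth noting, though, that your route is genuinely different from the one the paper intends: the corollary appears immediately after Theorem~\ref{thm:univ}, and the implicit proof is that any countable universal $V$ has $O_V$ Zariski dense, so $\ol{O}_V=\bA^{\ulambda}$ for all of them, which is exactly the statement that they share a single generalized orbit. Your argument instead goes through Proposition~\ref{prop:ideal-age} alone and uses only the definition of universality (every finite-dimensional $\ulambda$-space embeds, and $\fI(W)$ depends only on the isomorphism class of $W$); it never needs the deeper Kazhdan--Ziegler/BDDE characterization of universality. This is more elementary, and it also handles universal spaces of arbitrary (possibly uncountable) dimension without any reduction step, whereas Theorem~\ref{thm:univ} as stated concerns only countable spaces. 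What your argument does not give by itself is the identification of the common Zariski class as the \emph{generic} orbit (i.e., $\fI(V)=0$, $\ol{O}_V=\bA^{\ulambda}$); you correctly note that this additional information requires Theorem~\ref{thm:univ}. Your remark about purity is also right: the hypothesis only serves to make the statement non-vacuous, since by Remark~\ref{rmk:pure-cat} universal spaces exist precisely when $\ulambda$ is pure.
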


\begin{remark}
Let $(V, \omega)$ be a countable $\ulambda$-space. Then universality of $V$ is also equivalent to $\omega$ having infinite strength. See \cite[Definition~2.6.4]{BDDE} for the definition of strength and \cite[Corollary~2.6.5]{BDDE} for the equivalence of these conditions when $\ulambda$ is a single partition.
\end{remark}

\subsection{Algebraic functors}

Let $\phi \colon \bA^{\umu} \to \bA^{\ulambda}$ be a map of $\GL$-varieties. For each vector space $V$, there is an induced map $\bA^{\umu}\{V\} \to \bA^{\ulambda}\{V\}$, which allows us to convert a $\umu$-structure on $V$ into a $\ulambda$-structure on $V$. This construction yields a functor
\begin{displaymath}
\Phi \colon \cC_{\umu} \to \cC_{\ulambda}.
\end{displaymath}
We say that a functor is \defn{algebraic} if it arises in this manner.

\begin{proposition} \label{prop:alg-im}
Let $\phi$ and $\Phi$ be as above, and let $C=\im(\phi)$. Then $\Phi(\cC_{\umu})=\cC_{\ulambda}(C)$.
\end{proposition}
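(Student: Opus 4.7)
The plan is to prove the two containments separately. Throughout, recall that $\Phi(V,\eta) = (V, \phi_V(\eta))$.

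\emph{Forward containment} $\Phi(\cC_{\umu}) \subset \cC_{\ulambda}(C)$. Given $(V, \eta) \in \cC_{\umu}$ with image $(V, \omega) = \Phi(V, \eta)$, I would first reduce to $V = \bV$: by Proposition~\ref{prop:count-ideal} I may replace $V$ by a countable subspace computing $\fI(V, \omega)$, and by pulling back along a surjection $\bV \twoheadrightarrow V$ I may further assume $V = \bV$, since pullback along a surjection preserves the ideal $\fI$ and commutes with $\phi$ by naturality. With $V = \bV$ we have $\omega = \phi(\eta) \in \im(\phi) = C$. By Chevalley's theorem (Theorem~\ref{thm:chevalley}), $C$ is $\GL$-constructible, so decomposes as a finite disjoint union of locally closed $\GL$-stable pieces; applying \cite[Proposition~3.4]{polygeom} to the piece containing $\omega$ shows that $O_\omega \subset C$, as desired.

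\emph{Reverse containment} $\cC_{\ulambda}(C) \subset \Phi(\cC_{\umu})$. Here I must show that any $(V, \omega) \in \cC_{\ulambda}(C)$ admits a preimage $\eta \in \bA^{\umu}\{V\}$ under $\phi_V$, i.e., that $\im(\phi_V) = C\{V\}$ for every vector space $V$. The containment $\im(\phi_V) \subset C\{V\}$ is formal from the forward direction. For the reverse, the strategy is to apply the decomposition theorem for morphisms \cite[Theorem~7.8]{polygeom} to $\phi$, obtaining $\GL$-stable locally closed decompositions $\bA^{\umu} = \bigsqcup_j Y_j$ and $\bA^{\ulambda} = \bigsqcup_i X_i$ and indices $i(j)$ such that each restriction $\phi|_{Y_j}\colon Y_j \to X_{i(j)}$ is locally elementary, in particular surjective. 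Then $C = \bigcup_j X_{i(j)}$ and, using Proposition~\ref{prop:qaff-func}, $C\{V\} = \bigcup_j X_{i(j)}\{V\}$; it remains to check that each restricted map is surjective on $V$-points. For a strictly elementary map $\psi \times \pi \colon B \times \bA^{\umu'} \to A \times \bA^{\ulambda'}$ with $\ulambda' \subset \umu'$ and $\psi$ surjective, this is immediate on $V$-points: $\psi$ is unchanged, and any $\ulambda'$-structure on $V$ lifts to a $\umu'$-structure by setting the extra components to zero.

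\emph{Main obstacle.} The principal difficulty is upgrading surjectivity on $V$-points from the strictly elementary case to the locally elementary case: one needs the local $G(m)$-trivializations guaranteed by the decomposition theorem to patch together into a global surjection $Y_j\{V\} \twoheadrightarrow X_{i(j)}\{V\}$, taking care that the functoriality of $(-)\{V\}$ from Proposition~\ref{prop:qaff-func} interacts properly with the quasi-affine structure and $G(m)$-action appearing in the decomposition theorem. Once this is handled, summing over $j$ gives the desired identity $\im(\phi_V) = C\{V\}$.
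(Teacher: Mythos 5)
Your forward containment is correct (if more roundabout than necessary: since $\bA^{\umu}$ and $\bA^{\ulambda}$ are affine, one can reduce to $V=\bV$ directly and then use the characterization $\omega\in C\{V\}$). The problem is in the reverse containment. The step you defer as the ``main obstacle''---that each locally elementary piece $Y_j\to X_{i(j)}$ is surjective on $V$-points for finite dimensional $V$---is not merely an unfinished patching argument; it is false. For example, let $Y\subset\bA^{[(1),(1)]}$ be the open locus of linearly independent pairs of linear forms, let $X=\bA^{[(1)]}\setminus\{0\}$, and let $\phi(\ell_1,\ell_2)=\ell_1$. This map is surjective, and it is locally elementary: around any point one finds $m$ and standard basis vectors $e_a,e_b$ with $a,b\le m$ such that the chart $\{\ell_1(e_a)\ell_2(e_b)-\ell_1(e_b)\ell_2(e_a)\ne 0,\ \ell_1(e_a)\ne 0\}\to\{\ell_1(e_a)\ne 0\}$ is $G(m)$-elementary. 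Yet for $\dim V=1$ one has $Y\{V\}=\emptyset$ while $X\{V\}=V^*\setminus\{0\}\ne\emptyset$, so there is no surjectivity on $V$-points. The underlying issue is the failure of functoriality of $X\{V\}$ for injections in the quasi-affine case (noted right after Proposition~\ref{prop:qaff-func}): a finite dimensional point of a stratum of $\bA^{\ulambda}$ need not lift into the corresponding stratum of $\bA^{\umu}$, even though the stratum maps onto it at the infinite level. This is precisely why the paper's lifting statement for locally elementary maps (Lemma~\ref{lem:key-2}) carries the extra hypothesis of a compatible point over a subspace $W$, and even then handles finite dimensional $W$ only after enlarging it to an infinite dimensional space. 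A secondary gap: the identity $C\{V\}=\bigcup_j X_{i(j)}\{V\}$ is not supplied by Proposition~\ref{prop:qaff-func}; it amounts to well-definedness of $C\{V\}$ across different constructible decompositions, which your route would also have to address.

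The intended argument is much softer and avoids the decomposition theorem entirely, exploiting that the ambient spaces are affine, so $(-)\{V\}$ is functorial for all linear maps there. For countable $V$, identify $V$ with $\bV$; then $\omega\in C\{\bV\}=C=\im(\phi)$ and a lift $\eta$ exists tautologically. For finite dimensional $V$, identify $V$ with the span of the first $n$ basis vectors of $\bV$, pull $\omega$ back along the projection $\bV\to V$ to get $\tilde\omega$, which lies in $C$ by functoriality of $C\{-\}$ for surjections; choose $\tilde\eta\in\bA^{\umu}$ with $\phi(\tilde\eta)=\tilde\omega$ and set $\eta=\tilde\eta|_V$. Since the composite $V\to\bV\to V$ is the identity and $\phi$ is natural, $\phi_V(\eta)=\omega$, so $(V,\omega)=\Phi(V,\eta)$. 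In short: lift at the infinite level, where the statement is a tautology, and restrict back---rather than trying to prove a stratum-by-stratum surjectivity on finite dimensional points, which genuinely fails.
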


\begin{proof}
The containment $\Phi(\cC_{\umu}) \subset \cC_{\ulambda}(C)$ means $\phi(\bA^{\umu}\{V\}) \subset C\{V\}$ for all vector spaces $V$, which follows from Proposition~\ref{prop:qaff-func}. For the reverse, suppose that $(V,\omega)$ belongs to $\cC_{\ulambda}(C)$, so that $\omega \in C\{V\}$. If $V$ is countable, we may as well identify it with $\bV$, and then $\omega \in \im(\phi)$; we can thus find $\eta \in \bA^{\umu}$ such that $\eta=\phi(\omega)$, and then $(V,\omega)=\Phi(V, \eta)$. Now suppose $V$ is finite. We may as well identify $V$ with the span of the first $n$ basis vectors in $\bV$. Let $\tilde{\omega}$ be the pull-back of $\omega$ to $\bV$, let $\tilde{\eta} \in \bA^{\umu}$ satisfy $\phi(\tilde{\eta})=\tilde{\omega}$, and let $\eta$ be the restriction of $\tilde{\eta}$ to $V$. Then $\Phi(V,\eta)=(V,\omega)$. The uncountable case is similar, but we omit the details (we will not need it).
\end{proof}

\begin{example}
Let $\umu=[(1), (1)]$ and $\ulambda=[(2)]$. Consider the map $\phi \colon \bA^{\umu} \to \bA^{\ulambda}$ defined by $\phi(\ell_1, \ell_2)=\ell_1^2+\ell_2^2$, and let $\Phi \colon \cC_{\umu} \to \cC_{\ulambda}$ be the associated algebraic functor. This functor is described as follows. Let $(V, (\lambda_1, \lambda_2))$ be a $\umu$-space, that is, $V$ is a vector space and $\lambda_1, \lambda_2 \in V^*$ are linear forms on $V$. Then $\Phi(V)=(V, \omega)$ is the quadratic space with form $\omega(v)=\lambda_1(v)^2+\lambda_2(v)^2$. Thus $\Phi$ simply converts two linear forms into a quadratic form by summing their squares.
\end{example}

\section{The age of a tensor space} \label{s:age}

Motivated by ideas from model theory \cite[\S 2.1]{Macpherson}, we make the following definition:

\begin{definition}
The \defn{age} of a $\ulambda$-space $V$, denoted $\age(V)$, is the class of all finite $\ulambda$-spaces that embed into $V$.
\end{definition}

The following is our main results on ages. It uses the theory of typical morphisms (\S \ref{ss:typical}).

\begin{theorem} \label{thm:typical-age}
Let $V$ be an infinite $\ulambda$-space, let $\phi \colon \bA^{\umu} \to \ol{O}_V$ be a typical morphism, and let $C \subset \bA^{\ulambda}$ be the image of $\phi$. Then $\age(V)=\cC^{\rf}_{\ulambda}(C)$.
\end{theorem}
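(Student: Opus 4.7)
The plan is to prove both inclusions $\age(V) \subseteq \cC^{\rf}_{\ulambda}(C)$ and $\cC^{\rf}_{\ulambda}(C) \subseteq \age(V)$, after reducing to the case where $V$ is countable. For the reduction, given any $W \in \age(V)$, Proposition~\ref{prop:count-ideal} (applied after adjoining the image of $W$) produces a countable subspace $V_0 \subseteq V$ containing this image with $\fI(V_0) = \fI(V)$, hence with the same orbit closure and therefore the same typical-morphism data; the countable case applied to $V_0$ then gives $W \in \age(V_0) = \cC^{\rf}_{\ulambda}(C)$. Conversely, fixing any countable subspace $V_0 \subseteq V$ with $\fI(V_0) = \fI(V)$, the countable case yields $\cC^{\rf}_{\ulambda}(C) = \age(V_0) \subseteq \age(V)$.

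Assume now $V = (\bV, \omega_V)$. Since $\phi$ is dominant with $\GL$-stable constructible image $C$ in $\ol{O}_V$, $C$ contains a non-empty $\GL$-stable open subset of $\ol{O}_V$; by the fact recalled in \S\ref{ss:orb} (any $\GL$-stable open subset of $\ol{O}_V$ contains $O_V$), we conclude $\omega_V \in O_V \subseteq C$, so we may write $\omega_V = \phi(\eta_V)$ for some $\eta_V \in \bA^{\umu}$. The easy inclusion $\age(V) \subseteq \cC^{\rf}_{\ulambda}(C)$ then follows from naturality of $\phi$ in the underlying vector space: for any witness $\alpha \colon W \hookrightarrow V$, one has $\omega_W = \alpha^* \omega_V = \alpha^* \phi_{\bV}(\eta_V) = \phi_W(\alpha^* \eta_V) \in \phi_W(\bA^{\umu}\{W\}) = C\{W\}$.

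The reverse inclusion is the heart of the proof. Given $\omega_W = \phi_W(\eta_W) \in C\{W\}$, the strategy is to find a preimage $\eta_V \in \phi^{-1}(\omega_V)$ with dense $\GL$-orbit in $\bA^{\umu}$. Granting this, $(\bV, \eta_V)$ is a universal countable $\umu$-space by Theorem~\ref{thm:univ} (noting that $\umu$ is pure as part of the typical-morphism definition), so there is a $\umu$-embedding $\beta \colon (W, \eta_W) \hookrightarrow (\bV, \eta_V)$. Applying naturality of $\phi$ once more, $\beta^* \omega_V = \beta^* \phi_{\bV}(\eta_V) = \phi_W(\beta^* \eta_V) = \phi_W(\eta_W) = \omega_W$, so $\beta$ is simultaneously a $\ulambda$-embedding $W \hookrightarrow V$, placing $W$ in $\age(V)$.

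The main obstacle, and the technical heart of the argument, is the key claim that every $\eta_V \in \phi^{-1}(\omega_V)$ has dense $\GL$-orbit in $\bA^{\umu}$. Because $\ol{O}_V$ is an orbit closure, Proposition~\ref{prop:type-orb} ensures that the typical morphism onto it has the form $\phi \colon \bA^{\umu} \to \ol{O}_V$ with no finite-dimensional base factor, so the defining property of typicality is exactly that no proper closed $\GL$-subvariety of $\bA^{\umu}$ maps dominantly onto $\ol{O}_V$. If some $\eta_V$ failed to have dense orbit, then $Z = \ol{\GL \cdot \eta_V}$ would be such a proper closed $\GL$-subvariety; but $\phi(Z) \supseteq \phi(\GL \cdot \eta_V) = \GL \cdot \omega_V$, whose closure is $\ol{O}_V$ since $\omega_V \in O_V$, forcing $\phi|_Z$ to be dominant onto $\ol{O}_V$, a contradiction.
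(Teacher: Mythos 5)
Your proposal is correct and takes essentially the same route as the paper: reduce to the countable case via a countable subspace with the same ideal, use Chevalley's theorem plus the fact that non-empty $\GL$-stable opens of $\ol{O}_V$ contain $O_V$ to lift $\omega_V$ to a point $\eta_V \in \bA^{\umu}$, note that typicality forces any such lift to have dense orbit (you spell out the contradiction the paper leaves implicit), invoke Theorem~\ref{thm:univ}, and exploit naturality of $\phi$ for both inclusions. The only steps you treat as automatic that the paper justifies by citation are the existence of a $k$-point lift of $\omega_V$ (which in this infinite-dimensional setting requires \cite[Proposition~7.15]{polygeom}) and the passage from $\omega_W \in C\{W\}$ to a $\umu$-structure $\eta_W$ on $W$ with $\phi_W(\eta_W)=\omega_W$, which is precisely (the proof of) Proposition~\ref{prop:alg-im} and should be cited rather than read off from the definition of $\cC^{\rf}_{\ulambda}(C)$.
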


\begin{proof}
First suppose that $V$ is countable. Without loss of generality, $V=(\bV, \omega)$ where $\omega \in \bA^{\ulambda}$ has orbit closure $\ol{O}_V$. The image of $\phi$ contains a dense open subset of $\ol{O}_V$ by Chevalley's theorem (Theorem~\ref{thm:chevalley}), which necessarily contains $\omega$ (see \S \ref{ss:orb}). We can thus lift $\omega$ to a point $\eta \in \bA^{\umu}$; here we use \cite[Proposition~7.15]{polygeom} to ensure we can actually lift $\omega$ to a $k$-point. Since $\phi$ is typical, $\eta$ has dense orbit, and so $(\bV, \eta)$ is a universal $\umu$-space (Theorem~\ref{thm:univ}). Letting $\Phi$ be the algebraic functor associated to $\phi$, it follows that $\age(V)$ contains $\Phi(\cC^{\rf}_{\umu})$, which is identified with $\cC^{\rf}_{\ulambda}(C)$ by (the proof of) Proposition~\ref{prop:alg-im}. On the other hand, if $W$ is a finite dimensional subspace of $V$ then $(W, \omega)=\Phi(W, \eta)$, and so $\age(V)$ is contained in $\Phi(\cC^{\rf}_{\umu})$.

We now handle the general case. Let $V_0$ be a countable subspace of $V$ with $\fI(V)=\fI(V_0)$ (Proposition~\ref{prop:count-ideal}). Of course, $\age(V_0) \subset \age(V)$. Suppose $W \in \age(V)$, and identify $W$ with a subspace of $V$. Then $\fI(V_0+W)=\fI(V_0)$, and so $\age(V_0+W)=\age(V_0)$ by the countable case, and so $W \in \age(V_0)$. Thus $\age(V_0)=\age(V)$. By the previous paragraph, $\age(V_0)=\cC^{\rf}_{\ulambda}(C)$, and so the same holds for $\age(V)$. (Note that $\ol{O}_V=\ol{O}_{V_0}$, and so they have the same typical morphism $\phi$.)
\end{proof}

The theorem shows that $\age(V)$ is a very well-behaved object; for instance, the identification with $\cC_{\ulambda}^{\rf}(C)$ shows that $\age(V)$ admits a finite algebraic description. The constructible set $C$ is unique (since typical morphisms are unique up to isomorphism by Proposition~\ref{prop:type-orb}), and it will be useful to give it a name:

\begin{definition}
We define $\uage(V) \subset \bA^{\ulambda}$ to be the $\GL$-constructible set $C$ above.
\end{definition}

The theorem has the following interesting corollary, which we essentially used in proof of the uncountable case.

\begin{corollary} \label{prop:Zar-age}
Let $V$ and $W$ be infinite $\ulambda$-spaces. Then $V$ and $W$ are Zariski equivalent if and only if $\age(V)=\age(W)$.
\end{corollary}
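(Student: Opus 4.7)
The plan is to deduce both directions from Theorem~\ref{thm:typical-age}, together with Proposition~\ref{prop:ideal-age}.

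For the easy direction, suppose $V$ and $W$ are Zariski equivalent, so $\ol{O}_V=\ol{O}_W$. By the uniqueness clause of Proposition~\ref{prop:type-orb}, a typical morphism for this common orbit closure is unique up to isomorphism, so its image $\uage(V)=\uage(W)$ is a well-defined $\GL$-constructible set. Theorem~\ref{thm:typical-age} then gives
\begin{displaymath}
\age(V) = \cC^{\rf}_{\ulambda}(\uage(V)) = \cC^{\rf}_{\ulambda}(\uage(W)) = \age(W).
\end{displaymath}

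For the converse, suppose $\age(V)=\age(W)$. The key observation is that the ideal $\fI(U)$ depends only on the isomorphism class of $U$ as a $\ulambda$-space: if $\alpha\colon U \xrightarrow{\sim} U'$ is an isomorphism, then evaluating $f\in R_{\ulambda}\{k^n\}$ on $(v_1,\ldots,v_n)\in U^n$ gives the same scalar as evaluating it on $(\alpha(v_1),\ldots,\alpha(v_n))\in (U')^n$, because the $\ulambda$-structure on $U$ is the pullback of that on $U'$. By Proposition~\ref{prop:ideal-age},
\begin{displaymath}
\fI(V) = \bigcap_{V' \subset V} \fI(V'),
\end{displaymath}
the intersection ranging over finite dimensional subspaces $V'$ of $V$ with induced structure. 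Each such $V'$ lies in $\age(V)=\age(W)$, hence is isomorphic as a $\ulambda$-space to some finite dimensional subspace of $W$, and by the observation above its ideal already appears in the analogous intersection for $W$. Thus $\fI(W)\subset \fI(V)$, and by symmetry the two ideals are equal, so $\ol{O}_V=\ol{O}_W$ and $V$ and $W$ are Zariski equivalent.

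There is no serious obstacle: the only point to check carefully is the isomorphism-invariance of $\fI$, which is a direct consequence of the definition of the evaluation map in \S\ref{ss:ideal}. This argument is in fact a clean repackaging of what was used in the proof of Theorem~\ref{thm:typical-age} to reduce the uncountable case to the countable one, so the corollary essentially records a step already implicit in that proof.
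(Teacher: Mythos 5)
Your proof is correct and follows essentially the same route as the paper: one direction via Proposition~\ref{prop:ideal-age} (ages determine the ideal $\fI$, using the isomorphism-invariance of $\fI$ on finite spaces, which the paper leaves implicit), and the other via Theorem~\ref{thm:typical-age}, whose description of the age depends only on $\ol{O}_V$. Your closing remark also matches the paper, which notes this corollary was essentially used in the reduction to the countable case in the proof of Theorem~\ref{thm:typical-age}.
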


\begin{proof}
If $\age(V)=\age(W)$ then $\fI(V)=\fI(W)$ by Proposition~\ref{prop:ideal-age}. Conversely, if $\fI(V)=\fI(W)$ then $\age(V)=\age(W)$ by Theorem~\ref{thm:typical-age}; indeed, the description of $\age(V)$ given there only depends on $\fI(V)$.
\end{proof}

\begin{remark}
In light of the corollary, one might wonder if the following two statements are equivalent:
\begin{enumerate}
\item $V$ is a Zariski specialization of $W$.
\item $\age(V) \subset \age(W)$.
\end{enumerate}
Certainly (b) implies (a) by Proposition~\ref{prop:ideal-age}. However, the converse need not hold. To see this, let $\phi \colon \bA^{\umu} \to \bA^{\ulambda}$ be a map of $\GL$-varieties, with $\umu$ and $\ulambda$ pure, such that $\im(\phi)$ is not Zariski closed. For instance, if $\ulambda=[(4)]$ then the strength $\le 3$ locus in $\bA^{\ulambda}$ is not Zariski closed \cite{BBOV}, and is naturally the image of a map $\phi \colon \bA^{\umu} \to \bA^{\ulambda}$ with $\umu$ pure. Let $\omega \in \bA^{\ulambda}$ be the image of a $\GL$-generic point of $\bA^{\umu}$ and let $\eta$ be a point of $\bA^{\ulambda}\{k^n\}$, for some $n$, in $\ol{\im(\phi)} \setminus \im(\phi)$. Then $(\bV, \eta)$ is a Zariski specialization of $(\bV, \omega)$, but the age of the former contains $(k^n, \eta)$, and the age of the latter does not.
\end{remark}

\section{Homogeneous structures} \label{s:homo}

\subsection{Generalities}

Let $\cC$ be a category in which all morphisms are monomorphisms; we refer to morphisms in $\cC$ as embeddings. Let $\Ind(\cC)$ denote the category of ind-objects in $\cC$, and identify $\cC$ with a full subcategory of $\Ind(\cC)$. We say that an ind-object is \defn{countable} if it is represented by a inductive system in $\cC$ of countable cardinality; such an object is isomorphic to one of the form $X_1 \to X_2 \to \cdots$, with $X_i \in \cC$. We will mostly be concerned with the countable case.

Let $\Omega$ be an ind-object in $\cC$. We make the following definitions:
\begin{itemize}
\item $\Omega$ is \defn{universal} if every object of $\cC$ embeds into $\cC$.
\item $\Omega$ is \defn{f-injective} if whenever we have embeddings $\alpha \colon X \to Y$ and $\gamma \colon X \to \Omega$, with $X$ and $Y$ in $\cC$, there exists an embedding $\beta \colon Y \to \Omega$ such that $\gamma=\beta \circ \alpha$.
\item $\Omega$ is \defn{homogeneous} if whenever $\alpha, \beta \colon X \to \Omega$ are embeddings, with $X$ in $\cC$, there exists an automorphism $\sigma$ of $\Omega$ such that $\beta=\sigma \circ \alpha$.
\end{itemize}
We have the following basic results concerning these concepts (see \cite[\S A.4]{homoten}):

\begin{proposition} \label{prop:homo}
We have the following:
\begin{enumerate}
\item Any f-injective countable ind-object is homogeneous.
\item A universal homogeneous countable ind-object is f-injective.
\item Any two universal homogeneous countable ind-objects are isomorphic.
\item If $\Omega$ is a universal homogeneous countable ind-object then any countable ind-object embeds into $\Omega$.
\end{enumerate}
\end{proposition}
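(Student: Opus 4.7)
The plan is to prove all four parts by variations on the classical Fra\"iss\'e back-and-forth argument. The single technical ingredient I would need throughout is that every embedding from an object of $\cC$ into a countable ind-object $\Omega = \varinjlim X_n$ factors through some $X_n$; this is the standard compactness statement for $\cC$-objects inside $\Ind(\cC)$ and is immediate from the construction of hom-sets in an ind-category.

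For part~(a), given $\Omega$ f-injective and two embeddings $\alpha, \beta \colon X \to \Omega$, I would build inductively a chain $Z_0 \to Z_1 \to \cdots$ in $\cC$ together with two compatible co-cones $\gamma_n, \delta_n \colon Z_n \to \Omega$, starting with $Z_0 = X$, $\gamma_0 = \alpha$, $\delta_0 = \beta$. At step $n+1$, if $n$ is even, pick $m \ge n$ such that $\gamma_n$ factors as $Z_n \to X_m \hookrightarrow \Omega$, set $Z_{n+1} := X_m$ with $\gamma_{n+1}$ the inclusion, and use f-injectivity to extend $\delta_n$ along the embedding $Z_n \to Z_{n+1}$ to $\delta_{n+1}\colon Z_{n+1} \to \Omega$. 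At odd steps swap the roles of $\gamma$ and $\delta$. By cofinality, both $\varinjlim \gamma_n$ and $\varinjlim \delta_n$ are isomorphisms $\varinjlim Z_n \to \Omega$, and their composite is the desired automorphism $\sigma$ satisfying $\sigma \circ \alpha = \beta$.

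Part~(b) is almost immediate: given $\alpha \colon X \to Y$ and $\gamma \colon X \to \Omega$, universality yields some embedding $\eta \colon Y \to \Omega$, and homogeneity applied to the pair $\eta \circ \alpha, \gamma \colon X \to \Omega$ produces $\sigma \in \Aut(\Omega)$ with $\sigma \circ \eta \circ \alpha = \gamma$, so $\beta := \sigma \circ \eta$ works. For part~(c), once (b) is in hand both $\Omega$ and $\Omega'$ are f-injective, and the same back-and-forth as in (a) — but now alternating between extending along cofinal chains in $\Omega$ and in $\Omega'$ — builds a common $\varinjlim Z_n$ mapping isomorphically to both, which gives $\Omega \cong \Omega'$. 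Part~(d) is a straightforward induction: writing $Y = \varinjlim Y_n$, universality provides an embedding $Y_1 \to \Omega$, and f-injectivity (available by (b)) inductively extends each $Y_n \hookrightarrow \Omega$ to $Y_{n+1} \hookrightarrow \Omega$; passing to the colimit produces $Y \hookrightarrow \Omega$.

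The only genuine obstacle is routine bookkeeping in the back-and-forth arguments for (a) and (c): one must check that each chosen $Z_{n+1}$ really lies in $\cC$ (not just in $\Ind(\cC)$), that the transition maps $Z_n \to Z_{n+1}$ and the extensions $\gamma_{n+1}, \delta_{n+1}$ remain strictly compatible under the cocone conditions, and that the cofinality of the constructed chains in $\Omega$ (and $\Omega'$) really forces the colimit maps to be isomorphisms in $\Ind(\cC)$. All of these verifications reduce to the compactness principle recorded at the outset, and everything else in the proposition is essentially formal.
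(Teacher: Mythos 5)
Your proposal is correct and is essentially the argument the paper relies on: the paper proves nothing here but cites \cite[\S A.4]{homoten}, where these statements are established by exactly this kind of categorical Fra\"iss\'e back-and-forth (compactness of objects of $\cC$ in $\Ind(\cC)$, f-injectivity to extend along one side, cofinality plus uniqueness of factorizations through monomorphisms to get the colimit isomorphisms). The routine verifications you flag do go through, so no further comment is needed.
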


Categorical Fra\"iss\'e theory, which goes back to work of Droste--G\"obel \cite{DrosteGobel}, gives a criterion for the existence of a universal homogeneous countable ind-object (see \cite[\S A.6]{homoten}). This was used in \cite{homoten} to construct universal homogeneous $\umu$-spaces (see below), but will not be needed in this paper; we construct our weakly homogeneous $\ulambda$-spaces by modifying the universal homogeneous $\umu$-spaces constructed in \cite{homoten}.

\subsection{Homogeneous tensor spaces}

Fix a tuple of partitions $\umu$. We now apply the above concepts when $\cC=\cC_{\umu}^{\rf}$, identifying $\Ind(\cC)$ with $\cC_{\umu}$. In particular, we have the following important concept:

\begin{definition} \label{defn:homo}
A $\umu$-space $V$ is \emph{homogeneous} if whenever $\alpha,\beta \colon W \to V$ are embeddings, with $W$ a finite $\umu$-space, there exists an automorphism $\sigma$ of $V$ such that $\beta=\sigma \circ \alpha$.
\end{definition}

\begin{theorem} \label{thm:univ-homo}
Suppose $\umu$ is pure. Then a countable universal homogeneous $\umu$-space exists, and any two are isomorphic.
\end{theorem}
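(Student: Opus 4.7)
The uniqueness statement is immediate from Proposition~\ref{prop:homo}(c), so the content of the theorem is existence. The plan is to apply categorical Fra\"iss\'e theory --- as reviewed in \cite[\S A.6]{homoten} following \cite{DrosteGobel} --- to the category $\cC := \cC_\umu^{\rf}$ of finite-dimensional $\umu$-spaces, whose ind-completion is identified with $\cC_\umu$, and then to read off the conclusions via Proposition~\ref{prop:homo}.

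I would first verify the three structural properties of $\cC$ needed by the categorical Fra\"iss\'e theorem. \emph{(Initial object.)} The zero vector space, with its unique $\umu$-structure, is initial precisely because $\umu$ is pure (Remark~\ref{rmk:pure-cat}); this is the only place where the purity hypothesis enters. \emph{(Hereditary property.)} Every subspace of a finite $\umu$-space, with the restricted structure, is again a finite $\umu$-space, and the inclusion is an embedding. \emph{(Amalgamation property.)} Given embeddings $W \hookrightarrow V_1$ and $W \hookrightarrow V_2$, take the vector-space pushout $V = V_1 \oplus_W V_2$ and, after choosing complements $U_i$ of $W$ in $V_i$, decompose $V \cong W \oplus U_1 \oplus U_2$. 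Since $\bS_\umu$ is a polynomial (Schur) functor, $\bS_\umu(V)$ admits a direct-sum decomposition whose summands involving no $U_2$-factor reconstitute $\bS_\umu(V_1)$ (and dually for $V_2$); the two sub-sums overlap in $\bS_\umu(W)$, where the given structures agree. Gluing them, and extending by zero on the remaining ``cross-term'' summands that involve both $U_1$ and $U_2$, yields a $\umu$-structure on $V$ for which each $V_i \hookrightarrow V$ is an embedding of $\umu$-spaces. The joint-embedding property then follows by amalgamating over the initial object $0$.

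With these in hand, the categorical Fra\"iss\'e theorem produces a countable f-injective ind-object $\Omega \in \cC_\umu$. By Proposition~\ref{prop:homo}(a), $\Omega$ is homogeneous; and by f-injectivity applied to extensions of $0 \hookrightarrow \Omega$, every finite $\umu$-space embeds into $\Omega$, giving universality. The isomorphism class of $\Omega$ is then pinned down by Proposition~\ref{prop:homo}(c), completing uniqueness as well.

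The main obstacle is a size subtlety: when $k$ is uncountable, $\cC$ has uncountably many isomorphism classes, so the classical, essentially-countable form of the Fra\"iss\'e theorem does not literally apply. The categorical version in \cite[\S A.6]{homoten} circumvents this by constructing the limit as a countable tower $\Omega_1 \hookrightarrow \Omega_2 \hookrightarrow \cdots$ of \emph{individual} amalgamations, at each stage absorbing the next entry on a bookkeeping list of finite-dimensional extensions of subspaces already embedded in $\Omega_n$. Arranging the bookkeeping so that every relevant extension is eventually addressed --- despite the uncountable ambient class --- is the delicate point, and is where I would rely on the construction in \cite{homoten}.
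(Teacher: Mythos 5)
Your overall strategy is the same as the paper's: the paper proves Theorem~\ref{thm:univ-homo} simply by citing \cite{homoten} and noting that the proof there uses categorical Fra\"iss\'e theory, which is exactly the route you sketch. The parts you actually verify are correct: purity gives the initial object (Remark~\ref{rmk:pure-cat}); heredity is trivial; your amalgamation argument works, since $\bS_{\umu}$ applied to a split injection is the inclusion of a direct summand, so the two given structures glue along $\bS_{\umu}(W)$ and extend by zero on the cross terms, and pulling back along either inclusion recovers the original structure; and the passage from a countable f-injective ind-object to universality, homogeneity, and uniqueness via Proposition~\ref{prop:homo} is fine.

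The substantive issue is your account of the existence step, which is precisely the part you defer. A tower $\Omega_1\subset\Omega_2\subset\cdots$ in which each stage performs one amalgamation, i.e.\ handles one entry of a bookkeeping list, can only ever address countably many extension tasks, whereas f-injectivity quantifies over all embeddings $\gamma\colon X\to\Omega$ of finite $\umu$-spaces together with all extensions $X\subset Y$; over an uncountable field these tasks cannot be reduced to a countable list (already the embeddings of a fixed $X$ into a fixed $\Omega_n$ form an uncountable family, and for most $\umu$ there are uncountably many isomorphism classes of extensions as well). So no arrangement of the bookkeeping rescues the construction as you describe it; the resolution is structural, not combinatorial. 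The categorical Fra\"iss\'e theorem of \cite{DrosteGobel} and \cite[\S A.6]{homoten} requires, beyond amalgamation, a countability/domination hypothesis, and verifying it for $\cC^{\rf}_{\umu}$ is the real content of the existence proof in \cite{homoten}: each stage must absorb whole algebraic families of extension problems at once, for instance because for every $d$ there is a single finite-dimensional $\umu$-space into which all $\umu$-spaces of dimension at most $d$ embed (for a cubic form $g(x)=\sum_{j\le k}\bigl(\sum_i c^i_{jk}x_i\bigr)x_jx_k$ on $k^d$, the linear map $x\mapsto(x,M(x))$ with $M(x)_{jk}=\sum_i c^i_{jk}x_i$ embeds it into the fixed space carrying the form $\sum_{j\le k}y_{jk}x_jx_k$). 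Since the paper itself offers only the citation, deferring this step is defensible, but the missing idea you should name is domination by such bounded-dimension universal spaces (uniform absorption), not a cleverer enumeration.
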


\begin{proof}
This is the main theorem of \cite{homoten}. The proof uses categorical Fra\"iss\'e theory.
\end{proof}

\begin{example} \label{ex:pow-homo}
Let $\umu=[(d)]$ and let $\ulambda=[(de)]$, where $d,e \ge 1$ are integers. Let $(V,g)$ be a countable homogeneous $\umu$-space, and put $f=g^e$. We claim that $(V, f)$ is a homogeneous $\ulambda$-space. To see this, suppose that $W$ and $W'$ are two finite dimensional subspaces of $V$, and $\alpha \colon W \to W'$ is an isomorphism of $\ulambda$-spaces. We must construct an automorphism $\sigma$ of $(V,f)$ extending $\alpha$.

By assumption, we have $g(\alpha x)^e=g(x)^e$ for all $x \in W$. Since this an equality of polynomial functions, there is an $e$th root of unity $\epsilon$ such that $g(\alpha x) = \epsilon g(x)$ holds for all $x \in W$. Let $\delta$ be a $d$th root of $\epsilon$, and let $\beta \colon W \to W'$ be defined by $\beta(x)=\delta^{-1} \alpha(x)$. Since $g$ is homogeneous of degree $d$, we have $g(\beta x)=g(x)$ for all $x \in W$. Thus $\beta$ is an isomorphism of $\umu$-spaces. Since $(V,g)$ is homogeneous, there is an automorphism $\tau$ of $(V,g)$ extending $\beta$. We can now define $\sigma$ by $\sigma(x) = \delta \tau(x)$. Note that $f(\delta x)=f(x)$ since $f$ is homogeneous of degree $de$ and $\delta$ is a $de$th root of unity.
\end{example}

\begin{remark}
The above example this contradicts \cite[Proposition~3.13]{homoten}. There was an error in that proposition: in the proof there, we chose the constants $c_i$ to have $c_1 \ne 0$, but this is only guaranteed to be possible if $r>1$. The argument there still proves the following weaker statement: if $(V,f)$ is a homogeneous $[(d)]$-space then either $(V,f)$ is universal, or $f$ is a power of some lower degree form.
\end{remark}

\begin{example} \label{ex:quad-homo}
Suppose $\umu=[(2)]$. Let $V$ be a countable quadratic space. Up to isomorphism, $V$ is determined by two invariants: its rank $r$, and the dimension of its nullspace $d$. Of course, at least one of $r$ or $d$ must be infinite. We examine the various cases:
\begin{itemize}
\item If $r=\infty$ and $d=0$ then $V$ is universal homogenous. This follows easily from Witt's theorem; see \cite[Example~3.10]{homoten}.
\item If $r \ge 2$ and $d>0$ then $V$ is not homogeneous. Indeed, in this case we can find isotropic vectors $v_1$ and $v_2$ such that $v_1$ is not in the nullspace but $v_2$ is. The one-dimensional subspaces spanned by $v_1$ and $v_2$ are isomorphic, but there is no automorphism of $V$ that moves $v_1$ to $v_2$.
\item If $r=1$ then $V$ is homogeneous by Example~\ref{ex:pow-homo}. In this case any isotropic vector belongs to the nullspace, so the mechanism in the previous case is not available.
\item Finally, if $r=0$ then $V$ is obviously homogeneous: the form is identically zero. \qedhere
\end{itemize}
\end{example}

\begin{example}
Suppose $\umu=[(2),(2)]$. Let $(A,B)$ be a pair of infinite symmetric matrices, regarded as a $\umu$-structure on $\bV$. One can show that $(A,B)$ gives the universal homogeneous structure if and only if the columns of $A$ and $B$ (taken all together) are linearly independent. See \cite[Example~3.11]{homoten}.
\end{example}

\subsection{Weakly homogeneous spaces}

We now introduce a variant of Definition~\ref{defn:homo}.

\begin{definition} \label{defn:weak-homo}
A $\ulambda$-space $V$ is \defn{weakly homogeneous} if there exists a dense $\GL$-stable open subset $U$ of $\uage(V)$ such that the homogeneity condition holds in $\cC_{\ulambda}(U)$; that is, if $\alpha,\beta \colon W \to V$ are two embeddings with $W$ in $\cC^{\rf}_{\ulambda}(U)$ then there exists $\sigma \in \Aut(V)$ such that $\beta=\sigma \circ \alpha$.
\end{definition}

Let $V$ be weakly homogeneous $\Sigma$ be the class of $U$'s that fulfill the condition in the above definition. Clearly, $\Sigma$ is closed under arbitrary unions, and thus contains a unique maximal element. We denote this maximal element by $\cU(V)$.

\begin{example} \label{ex:wh-fin-rk}
Suppose $\ulambda=[(2)]$, and let $V$ be a countable quadratic space of rank $r$ and nullspace dimension $d$. There are a few cases to analyze:
\begin{itemize}
\item If $r$ is finite then $V$ is weakly homogenous. Indeed, one can show that if $W$ is a finite dimensional quadratic space of rank exactly $r$ then for any two embeddings $\alpha, \beta \colon W \to V$ there is an automorphism $\sigma$ of $V$ such that $\beta=\sigma \circ \alpha$. The space $\uage(V)$ is the rank $\le r$ locus in $\bA^{\ulambda}$, and one can take the $U$ in Definition~\ref{defn:weak-homo} to be the rank $r$ locus in $\bA^{\ulambda}$.
\item We have already seen that if $r=\infty$ and $d=0$ then $V$ is homogenous.
\item Finally, if $r=\infty$ and $d>0$ then $V$ is not weakly homogeneous. Indeed, in this case $\uage(V)=\bA^{\ulambda}$. Suppose $U$ is a non-empty open $\GL$-stable subset of $\bA^{\ulambda}$. Then $U$ is the rank $\ge s$ locus for some $s$. Choose a finite dimensional quadratic space of rank at least $s$ with non-trivial nullspace. We can then find embeddings $\alpha, \beta \colon W \to V$ such that the image of $\alpha$ does not meet the nullspace of $V$, but the image of $\beta$ does. Hence there is no automorphism $\sigma$ such that $\beta=\sigma \circ \alpha$. \qedhere
\end{itemize}
\end{example}

\begin{proposition} \label{prop:weak-homo-age}
Let $V$ and $W$ be countable weakly homogeneous $\ulambda$-spaces. The following are equivalent:
\begin{enumerate}
\item $V$ and $W$ are isomorphic.
\item $V$ and $W$ are Zariski equivalent.
\item $\age(V)=\age(W)$.
\end{enumerate}
\end{proposition}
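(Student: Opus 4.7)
The plan is to handle (a) $\Rightarrow$ (b) and (b) $\Leftrightarrow$ (c) immediately and concentrate on (c) $\Rightarrow$ (a) via a Fra\"iss\'e-style back-and-forth restricted to a suitable open subclass of the common age. Indeed, (a) $\Rightarrow$ (b) is trivial since isomorphic tensor spaces share the same ideal $\fI$, and (b) $\Leftrightarrow$ (c) is Corollary~\ref{prop:Zar-age} (both $V$ and $W$, being countable, are infinite). So I assume $V$ and $W$ are Zariski equivalent. Then $\ol{O}_V = \ol{O}_W$, and uniqueness of typical morphisms in the orbit-closure case (Proposition~\ref{prop:type-orb}) gives $\uage(V) = \uage(W)$. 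Set $U := \cU(V) \cap \cU(W)$, a $\GL$-stable dense open subset of $\uage(V)$; then both $V$ and $W$ satisfy the weak homogeneity property for embeddings of spaces in $\cC^{\rf}_{\ulambda}(U)$.

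To construct the isomorphism $V \to W$, I enumerate bases $v_1,v_2,\ldots$ of $V$ and $w_1,w_2,\ldots$ of $W$ and inductively produce finite-dimensional $V_n \subset V$, $W_n \subset W$, and isomorphisms $f_n \colon V_n \to W_n$ of $\ulambda$-spaces, each $f_{n+1}$ extending $f_n$, arranged so that $v_1,\ldots,v_n \in V_n$, $w_1,\ldots,w_n \in W_n$, and---crucially---$V_n \in \cC^{\rf}_{\ulambda}(U)$ (whence $W_n$ as well) for every $n$. In the forward step, set $V_n' := V_n + \langle v_{n+1}\rangle$; since $U$ is open, $\cC^{\rf}_{\ulambda}(U)$ is upward closed by Proposition~\ref{prop:closed-cat}, so $V_n' \in \cC^{\rf}_{\ulambda}(U) \subset \cC^{\rf}_{\ulambda}(\uage(W)) = \age(W)$, yielding an embedding $\alpha \colon V_n' \to W$. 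The two embeddings $\alpha|_{V_n}$ and $f_n$ of $V_n$ into $W$, combined with $V_n \in \cC^{\rf}_{\ulambda}(\cU(W))$, let me invoke weak homogeneity of $W$ to obtain $\sigma \in \Aut(W)$ with $\sigma \circ \alpha|_{V_n} = f_n$; then $f_{n+1} := \sigma \circ \alpha$ is the required extension. The backward step absorbing $w_{n+1}$ is symmetric and invokes weak homogeneity of $V$. The union $\bigcup_n f_n$ is the desired isomorphism.

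The main obstacle is the base case: producing a finite-dimensional $V_0 \subset V$ already lying in $\cC^{\rf}_{\ulambda}(U)$, so the induction has something to grab onto. This I extract from Proposition~\ref{prop:ideal-age}. The key preliminary observation is that $\omega_V \in U$: any $\GL$-stable dense open subset of $\ol{O}_V$ contains the generic generalized orbit $O_V$ (by the Proposition~3.4 of \cite{polygeom} cited in \S\ref{ss:orb}), and a quick check confirms that $\cU(V)$ and $\cU(W)$ each arise by intersecting $\uage(V)$ with such an open. Hence the ideal $I \subset R_{\ulambda}$ defining the complement of $U$ is not contained in $\fI(V) = \bigcap_{V_0} \fI(V_0)$, forcing $I \not\subset \fI(V_0)$ for some finite-dimensional $V_0 \subset V$; this is exactly $V_0 \in \cC^{\rf}_{\ulambda}(U)$. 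Embedding such a $V_0$ into $W$ via the containment $\cC^{\rf}_{\ulambda}(U) \subset \age(W)$ gives the starting isomorphism $f_0$, and the back-and-forth proceeds as described.
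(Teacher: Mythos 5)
Your argument is correct and is essentially the paper's: both proofs reduce to the dense open set $U=\cU(V)\cap\cU(W)$ inside $\uage(V)=\uage(W)$ and conclude that $V$ and $W$, being universal homogeneous countable objects of $\cC_{\ulambda}(U)$, must be isomorphic. The only difference is that the paper cites Proposition~\ref{prop:homo}(c) for this uniqueness, whereas you unfold that citation into an explicit back-and-forth (supplying the base case via Proposition~\ref{prop:ideal-age} and the upward-closure step via Proposition~\ref{prop:closed-cat}), which is the same underlying argument made self-contained.
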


\begin{proof}
It is clear that (a) implies (c), and we have already seen that (b) and (c) are equivalent (Proposition~\ref{prop:Zar-age}). We now assume (b) and (c) hold and prove (a). Let $U=\cU(V) \cap \cU(W)$, which is open dense subset of $\uage(V)=\uage(W)$. Both $V$ and $W$ are universal homogeneous objects in the category $\cC_{\ulambda}(U)$, and therefore isomorphic (Proposition~\ref{prop:homo}(c)).
\end{proof}

\begin{proposition}
Let $V$ be a countable weakly homogeneous $\ulambda$-space. Then any countable $\ulambda$-space $W$ with $O_W \subset \cU(V)$ embeds into $V$.
\end{proposition}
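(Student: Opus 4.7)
The plan is to apply Proposition~\ref{prop:homo}(d) in the category $\cC = \cC^{\rf}_\ulambda(\cU(V))$ with $\Omega = V$. What I need to check is that $V$ is a universal homogeneous countable ind-object of $\cC$, and that $W$ is a countable ind-object of $\cC$; once both are in place, part (d) of the proposition produces an embedding $W \hookrightarrow V$.

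Both ``ind-object'' requirements will follow from a single auxiliary claim: any countable $\ulambda$-space $X$ with $O_X \subset \cU(V)$ can be written as an increasing union $X = \bigcup_n X_n$ of finite-dimensional subspaces with $O_{X_n} \subset \cU(V)$. To prove this, I would start with an arbitrary filtration of $X$ by finite-dimensional subspaces. The induced chain $\ol{O}_{X_1} \subset \ol{O}_{X_2} \subset \cdots$ lives inside $\ol{O}_X$ and stabilizes by Draisma's topological noetherianity theorem. Since each $\fI(X_n)$ is radical and $\fI(X) = \bigcap_n \fI(X_n)$ by Proposition~\ref{prop:ideal-age}, the stable value of $\fI(X_n)$ is forced to equal $\fI(X)$, so $O_{X_n} = O_X \subset \cU(V)$ for all $n$ past some index, and discarding the early terms yields the desired filtration. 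Applied to $X = W$ this realizes $W$ as an object of $\Ind(\cC)$; applied to $X = V$ it does the same for $V$, provided $O_V \subset \cU(V)$. For the latter, note that $\cU(V) = \uage(V) \cap U$ for some $\GL$-stable open $U \subset \bA^\ulambda$, and density of $\cU(V)$ in $\uage(V)$ forces $U \cap \ol{O}_V$ to be a nonempty $\GL$-stable open subset of the irreducible variety $\ol{O}_V$, which then contains the generic generalized orbit $O_V$; since $O_V \subset \uage(V)$ by dominance of the typical morphism, $O_V \subset \cU(V)$.

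Universality and homogeneity of $V$ in $\cC$ are then quick. For universality, any finite-dimensional $X \in \cC$ satisfies $O_X \subset \cU(V) \subset \uage(V)$, so $X \in \cC^{\rf}_\ulambda(\uage(V)) = \age(V)$ by Theorem~\ref{thm:typical-age}, and hence embeds in $V$. Homogeneity in $\cC$ is exactly the defining property of weak homogeneity with distinguished open $\cU(V)$. Proposition~\ref{prop:homo}(d) then produces the desired embedding. I expect the principal obstacle to be the filtration claim itself: arbitrary finite-dimensional subspaces of $W$ need not lie in $\cC$ (for example, the zero subspace fails whenever $0 \notin \cU(V)$), so one really does need the noetherian stabilization argument to pass to a sufficiently advanced tail.
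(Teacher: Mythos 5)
Your overall plan---realize $W$ (and $V$) as countable ind-objects of $\cC^{\rf}_{\ulambda}(\cU(V))$, check that $V$ is universal homogeneous there, and invoke Proposition~\ref{prop:homo}(d)---is exactly the paper's strategy, and your verifications of universality, of homogeneity, and of $O_V \subset \cU(V)$ are fine. The gap is in the proof of your auxiliary filtration claim. Draisma's theorem is the \emph{descending} chain condition on $\GL$-stable closed subsets; it does not force the ascending chain $\ol{O}_{X_1} \subset \ol{O}_{X_2} \subset \cdots$ to stabilize, and in general it does not: for $\ulambda=[(2)]$ and $X$ a countable quadratic space of infinite rank, $\ol{O}_{X_n}$ is the rank $\le r_n$ locus with $r_n \to \infty$, a strictly increasing chain inside $\ol{O}_X=\bA^{\ulambda}$. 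A fortiori the stronger conclusion you extract, that $\fI(X_n)=\fI(X)$ (i.e.\ $O_{X_n}=O_X$) for $n$ large, is false: no finite-dimensional subspace of this $X$ is Zariski equivalent to $X$, since each has finite rank and hence its ideal strictly contains $\fI(X)$. So as written your argument produces neither a tail of the filtration lying in $\cC^{\rf}_{\ulambda}(\cU(V))$ nor even a single finite-dimensional subspace in that category.

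The filtration claim itself is true, but the correct mechanism is different, and it is what the paper uses. Since the typical morphism of $\ol{O}_W$ is dominant, $\uage(W)$ is dense in $\ol{O}_W$, and since $O_W \subset \cU(V)$ with $\cU(V)$ open, the age of $W$ must meet $\cU(V)$; this yields one finite-dimensional subspace $W_1 \subset W$ with $O_{W_1} \subset \cU(V)$. Then, because membership in the category attached to an open set is an upwards-closed condition (Proposition~\ref{prop:closed-cat}(b)), every finite-dimensional subspace of $W$ containing $W_1$ lies in $\cC^{\rf}_{\ulambda}(\cU(V))$, and the directed system of such subspaces already exhibits $W$ as a countable ind-object of that category (and similarly for $V$). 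In short: replace ``noetherian stabilization'' by ``density of the age plus upward closure of open conditions''; your concluding worry that the stabilization argument is really needed is precisely where the proposal goes astray.
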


\begin{proof}
Since $\age(W)$ is dense in $\ol{O}_W$, it must meet the open set $\cU(V)$. There is thus a finite dimensional subspace $W_1 \subset W$ that belongs to $\cC_{\ulambda}(\cU(V))$. Any other subspace of $W$ containing $W_1$ also belongs to $\cC_{\ulambda}(\cU(V))$ (Proposition~\ref{prop:closed-cat}(b)), and so $W$ is a countable ind-object of $\cC_{\ulambda}(\cU(V))$. It therefore embeds into $V$ by Proposition~\ref{prop:homo}(d).
\end{proof}

\begin{corollary} \label{cor:wh-embed}
Let $V$ be a countable weakly homogeneous $\ulambda$-space and let $W$ be a countable $\ulambda$-space that is Zariski equivalent to $V$. Then $W$ embeds into $V$.
\end{corollary}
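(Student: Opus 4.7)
The plan is to invoke the immediately preceding proposition, which guarantees that a countable $\ulambda$-space $X$ embeds into $V$ provided $O_X \subset \cU(V)$. Hence it is enough to verify $O_W \subset \cU(V)$, and by Zariski equivalence $O_W = O_V$, so everything reduces to the assertion $O_V \subset \cU(V)$.

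To establish this inclusion, I would appeal to the fact recalled in \S\ref{ss:orb} (Proposition~3.4 of \cite{polygeom}): every non-empty $\GL$-stable open subset of an orbit closure contains the (unique dense) generalized orbit. Thus it suffices to produce a non-empty $\GL$-stable open subset of $\ol{O}_V$ sitting inside $\cU(V)$. For this, I would first use Chevalley's theorem (Theorem~\ref{thm:chevalley}), exactly as in the proof of Theorem~\ref{thm:typical-age}, to exhibit a $\GL$-stable dense open subset $U_0$ of $\ol{O}_V$ contained in $\uage(V)$. Since $\cU(V)$ is by definition $\GL$-stable and open and dense in $\uage(V)$ (subspace topology), the intersection $\cU(V) \cap U_0$ is a $\GL$-stable open subset of $\bA^{\ulambda}$ lying inside $\ol{O}_V$, and it is non-empty because a set dense in $\uage(V)$ must meet the non-empty open piece $U_0$. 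Applying the topological fact above yields $O_V \subset \cU(V) \cap U_0 \subset \cU(V)$, as desired.

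There is no real obstacle here: the argument is a short combination of the preceding proposition, Zariski equivalence, and the behavior of generalized orbits under non-empty $\GL$-stable opens. The only modest subtlety is the bookkeeping between openness in $\uage(V)$ (a $\GL$-constructible, typically non-open subset of $\bA^{\ulambda}$) and openness in the ambient orbit closure $\ol{O}_V$, which is resolved by passing through the dense open piece $U_0 \subset \ol{O}_V$ furnished by Chevalley.
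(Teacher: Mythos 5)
Your argument is correct and is essentially the paper's proof: the paper likewise reduces via $O_W=O_V$ to the inclusion $O_V\subset\cU(V)$ and then invokes the preceding proposition, simply asserting that inclusion, which you verify in detail using Chevalley's theorem and the fact that any non-empty $\GL$-stable open subset of $\ol{O}_V$ contains $O_V$. One small wording correction: $\cU(V)\cap U_0$ is a $\GL$-stable open subset of $\ol{O}_V$ (not of $\bA^{\ulambda}$), which is exactly what your appeal to \cite[Proposition~3.4]{polygeom} requires, so the argument is unaffected.
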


\begin{proof}
We have $O_W=O_V$ and $O_V \subset \cU(V)$, so the result follows from the proposition.
\end{proof}

\section{The key result} \label{s:key}

The following is the key result that will allow us to construct weakly homogeneous spaces.

\begin{theorem} \label{thm:key}
An algebraic functor $\Phi \colon \cC_{\umu} \to \cC_{\ulambda}$, with $\umu$ pure, maps the countable universal homogeneous $\umu$-space to a weakly homogeneous $\ulambda$-space.
\end{theorem}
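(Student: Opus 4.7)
The plan is to apply Proposition~\ref{prop:homo}(a) by exhibiting $V := \Phi(\Omega)$ as a countable f-injective ind-object of $\cC_{\ulambda}(U)$ for a suitable $\GL$-stable open $U \subseteq \uage(V)$. Since f-injectivity in $\cC_{\ulambda}(U)$ forces homogeneity in that category, and homogeneity of $V$ in $\cC_{\ulambda}(U)$ is exactly the weak homogeneity of $V$ with witness $U$, this would complete the proof. Write $\phi \colon \bA^{\umu} \to \bA^{\ulambda}$ for the morphism defining $\Phi$ and $\Omega = (\bV, \eta)$; by Theorem~\ref{thm:univ}, universality of $\Omega$ forces $\eta$ to have dense $\GL$-orbit, so $\omega := \phi(\eta)$ defines $V = (\bV, \omega)$ and $\ol{O}_V = \overline{\im(\phi)}$.

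The choice of $U$ will come from the decomposition theorem for morphisms applied to $\phi$ (or to a typical morphism $\psi \colon \bA^{\umu_0} \to \ol{O}_V$ as in \S\ref{ss:typical}): this yields a $\GL$-stable open of $\ol{O}_V$ over which the map becomes locally elementary, meaning in a $G(m)$-neighborhood it looks like the product of a finite-dimensional morphism with a projection $\bA^{\umu'} \to \bA^{\ulambda'}$ for pure $\umu' \supseteq \ulambda'$. Intersecting this open set with $\uage(V)$ gives the desired $U$, dense in $\uage(V)$. The check that $V$ is a countable ind-object of $\cC_{\ulambda}(U)$ is then routine: $\omega \in O_V \subseteq U$ since any non-empty $\GL$-stable open of $\ol{O}_V$ contains $O_V$, and any finite subspace of $V$ can be enlarged to a finite $V_n$ with $\omega|_{V_n} \in U\{V_n\}$, because any polynomial witnessing $\omega \notin \bA^{\ulambda} \setminus U$ depends on only finitely many vectors of $\bV$.

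For f-injectivity, given embeddings $\gamma \colon X \to V$ and $\alpha \colon X \to Y$ in $\cC_{\ulambda}^{\rf}(U)$, set $\tilde\omega_X := \gamma^*\eta$ and $\tilde X := (X, \tilde\omega_X)$, so that $\gamma$ becomes a $\umu$-embedding $\tilde\gamma \colon \tilde X \to \Omega$. The core step is to construct a lift $\tilde\omega_Y \in \phi_Y^{-1}(\omega_Y)$ with $\alpha^*\tilde\omega_Y = \tilde\omega_X$; granted such a lift, $\alpha$ becomes a $\umu$-embedding $\tilde\alpha \colon \tilde X \to \tilde Y := (Y, \tilde\omega_Y)$, and the $\umu$-f-injectivity of $\Omega$ (Proposition~\ref{prop:homo}(b) via Theorem~\ref{thm:univ-homo}) yields $\tilde\beta \colon \tilde Y \to \Omega$ with $\tilde\gamma = \tilde\beta \circ \tilde\alpha$. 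The required extension of $\gamma$ along $\alpha$ is then $\beta := \Phi(\tilde\beta)$.

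The hard part will be the construction of $\tilde\omega_Y$: it is a formal-smoothness-type lifting question, asking that the evaluation map $\bA^{\umu}\{Y\}$ surject onto the finite-dimensional fiber product $\bA^{\umu}\{X\} \times_{\bA^{\ulambda}\{X\}} \bA^{\ulambda}\{Y\}$ at the relevant points. The whole purpose of restricting to $U$ in the locally elementary locus of $\phi$ is to reduce this question to the projection case $\bA^{\umu'} \to \bA^{\ulambda'}$, where the lift is constructed by freely extending the complementary $\umu' \setminus \ulambda'$-components of the $\umu$-structure from $X$ to $Y$, which is always possible in finite dimensions. Translating the local $G(m)$-elementary description of $\phi$ on $\GL$-varieties into a clean statement about finite-dimensional structure lifts on a given $X \hookrightarrow Y$ will require some bookkeeping on how the decomposition interacts with the evaluation functor $(-)\{V\}$.
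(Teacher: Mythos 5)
Your overall strategy coincides with the paper's proof of Theorem~\ref{thm:key}: apply the decomposition theorem to $\phi$ to obtain a dense $\GL$-stable open subset of $\ol{O}_V$ contained in $\uage(V)$ over which $\phi$ restricts to a locally elementary map from an open subvariety of $\bA^{\umu}$, pull the $\umu$-structure back along the embedding of the small finite space into $\Omega$, lift the $\ulambda$-structure of the large finite space to a compatible $\umu$-structure, and then push the f-injectivity of $\Omega$ through $\Phi$ via Proposition~\ref{prop:homo}. The genuine gap is your ``core step'': the existence of the lift is exactly Lemma~\ref{lem:key-3} (equivalently Lemma~\ref{lem:key-2}), and essentially all of the work of \S\ref{s:key} is concentrated there; deferring it as bookkeeping skips the points that make it nontrivial.

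Concretely, three things are missing. First, the surjectivity of $\bA^{\umu}\{V\}$ onto the fiber product is not automatic even when the downstairs data look generic: for $\phi(\ell_1,\ell_2)=\ell_1\ell_2$, the rank-two form $y_1y_2$ on a three-dimensional space, restricted to the line spanned by $e_3$, admits no lift compatible with the $\umu$-structure $(\ell_1,\ell_2)$ on that line given by $\ell_1(e_3)=1$, $\ell_2(e_3)=0$. So the lifting statement must carry a hypothesis on the $\umu$-structure of the \emph{small} space, namely that it lies in the locus upstairs where $\phi$ is locally elementary (this is the hypothesis $(W,\eta_W)\in\cC_{\umu}(Y)$ of Lemma~\ref{lem:key-3}), and your argument must explain why $\gamma^*\eta$ satisfies the needed condition; restricting to an open set $U$ downstairs does not by itself constrain finite-dimensional restrictions of $\eta$. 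Second, local elementarity is a condition on $G(m)$-stable neighborhoods of points of the $\GL$-varieties themselves, i.e.\ of structures on $\bV$ up to a shift, whereas your lifting problem concerns an inclusion of finite-dimensional spaces; the paper first proves the lift when the small space is infinite-dimensional and then reduces the finite case by adding an infinite-dimensional summand and restricting back, with an extra argument that the restricted lift does not land in the removed closed set. Third, the relevant varieties are quasi-affine and $(-)\{V\}$ is not functorial for injections (Proposition~\ref{prop:qaff-func} and the remark following it), so one cannot simply ``restrict and work in the elementary chart'': one must know that the point over the big space lies in the chart, not merely that its restriction does. The paper arranges this in Lemma~\ref{lem:key-4} by taking the chart downstairs to be a basic open $\ol{X}[1/h]$ with $h$ a $G(m)$-invariant function, hence pulled back from the shifted small space, so non-vanishing can be tested after restriction. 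Your description of the elementary model case itself (freely extending the complementary components, with the finite-dimensional factors carried along) is correct and matches Lemma~\ref{lem:key-1}; it is the reduction to that case that carries the mathematical content and is absent from your proposal.
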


The proof will take the entirety of \S \ref{s:key}. The basic idea is as follows. Let $\phi \colon \bA^{\umu} \to \bA^{\ulambda}$ be the map associated to $\Phi$, and let $E$ be a universal homogeneous $\umu$-space. Using the decomposition theorem (applied to $\phi$), we show that generic members of the age of $\Phi(E)$ can be lifted back to $\cC_{\umu}$ in a sufficiently nice way (see Lemma~\ref{lem:key-3} for the precise statement), and this allows us to transfer the f-injective property of $E$ to $\Phi(E)$.

\begin{lemma} \label{lem:key-1}
Let $\phi \colon Y \to X$ be an elementary morphism of $\GL$-varieties. Let $W \subset V$ be vector spaces, and consider the diagram
\begin{displaymath}
\xymatrix{
Y\{V\} \ar[r] \ar[d]_{\phi} & Y\{W\} \ar[d]^{\phi} \\
X\{V\} \ar[r] & X\{W\} }
\end{displaymath}
Let $\ol{y} \in Y\{W\}$ and $x \in X\{V\}$ be such that $\phi(\ol{y}) = x \vert_W$. Then there exists $y \in Y\{V\}$ such that $y \vert_W=\ol{y}$ and $\phi(y)=x$.
\end{lemma}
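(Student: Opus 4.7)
The plan is to unpack the definition of elementary morphism until nothing remains but the trivial fact that a linear functional on a subspace extends to the ambient space. By definition of elementary, we may write $Y = C \times \bA^{\umu}$ and $X = B \times \bA^{\ulambda}$ with $\ulambda$ a subtuple of $\umu$, and $\phi = \psi \times \pi$ where $\psi \colon C \to B$ is a surjection of finite-dimensional varieties and $\pi \colon \bA^{\umu} \to \bA^{\ulambda}$ is the coordinate projection onto the components indexed by $\ulambda$. Since $B$ and $C$ carry no $\GL$-action, the construction $(-)\{U\}$ acts trivially on them, so for any vector space $U$ one has $Y\{U\} = C \times \bA^{\umu}\{U\}$ and $X\{U\} = B \times \bA^{\ulambda}\{U\}$, and the restriction map from $V$-structures to $W$-structures acts as the identity on the finite-dimensional factors and as the usual restriction of linear functionals on the $\bA^{\umu}\{-\}$ and $\bA^{\ulambda}\{-\}$ factors.

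Hence I would decompose $\ol{y} = (\ol{c}, \ol{\eta})$ with $\ol{c} \in C$ and $\ol{\eta}$ a $\umu$-structure on $W$, and $x = (b, \omega)$ with $b \in B$ and $\omega$ a $\ulambda$-structure on $V$; the hypothesis $\phi(\ol{y}) = x|_W$ then becomes $\psi(\ol{c}) = b$ together with $\pi(\ol{\eta}) = \omega|_W$. Any lift $y$ must take the form $(\ol{c}, \eta)$ for some $\umu$-structure $\eta$ on $V$ (since restriction fixes the $C$-factor), and the two requirements $y|_W = \ol{y}$ and $\phi(y) = x$ reduce to the single pair of conditions $\eta|_W = \ol{\eta}$ and $\pi(\eta) = \omega$.

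Writing $\umu = [\mu_1, \ldots, \mu_s]$ and identifying $\ulambda$ with the subtuple indexed by some $I \subset \{1, \ldots, s\}$, the structure $\eta$ consists of components $\eta_j \colon \bS_{\mu_j}(V) \to k$. For $j \in I$, set $\eta_j := \omega_j$; the condition $\pi(\ol{\eta}) = \omega|_W$ ensures $\eta_j|_W = \ol{\eta}_j$. For $j \notin I$, extend the linear functional $\ol{\eta}_j$ along the (split) subspace inclusion $\bS_{\mu_j}(W) \hookrightarrow \bS_{\mu_j}(V)$, which is always possible. Assembling the $\eta_j$'s produces the desired $\eta$, and hence the lift $y$. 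I do not foresee any real obstacle: the entire content of the lemma is that linear functionals extend from subspaces, and essentially all of the work lies in the bookkeeping of the product decomposition of an elementary morphism at the level of $V$- and $W$-points.
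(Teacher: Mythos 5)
Your proposal is correct and follows essentially the same route as the paper: decompose $Y$ and $X$ via the definition of elementary morphism, observe that the hypothesis forces the finite-dimensional and $\ulambda$-components of the lift to be $(\ol{c},\omega)$, and fill in the complementary $\umu$-components by extending linear functionals from $\bS_{\mu_j}(W)$ to $\bS_{\mu_j}(V)$ (the paper phrases this as choosing any $v'\in\bA^{\umu}\{V\}$ restricting to $w'$, after splitting $\bA^{\umu}$ off as a complementary factor rather than indexing a subtuple). The only differences are notational bookkeeping.
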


\begin{proof}
By definition, we can choose isomorphisms
\begin{displaymath}
Y=C \times \bA^{\ulambda} \times \bA^{\umu}, \qquad X=B \times \bA^{\ulambda}
\end{displaymath}
such that $\phi=\psi \times \pi$, where $\psi \colon C \to B$ is a surjection of varieties and $\pi$ is the projection map. Write $\ol{y}=(c, w, w')$ where $c \in C$, $w \in \bA^{\ulambda}\{W\}$, and $w' \in \bA^{\umu}\{W\}$, so that $\phi(\ol{y})=(\psi(c), w)$. Write $x=(b, v)$, where $b \in B$ and $v \in \bA^{\ulambda}\{V\}$. We have $x \vert_W = (b, v \vert_W)$, so $b=\psi(c)$ and $v \vert_W=w$. We take $y=(c, v, v')$, where $v' \in \bA^{\umu}\{V\}$ is any element that restricts to $w'$.
\end{proof}

For a quasi-affine scheme $X$, we let $\ol{X}$ be the spectrum of $\Gamma(X, \cO_X)$. This is an affine scheme containing $X$ as an open subscheme. If $X$ is a quasi-affine $\GL$-variety then $\ol{X}$ is an affine $\GL$-variety.

\begin{lemma} \label{lem:key-4}
Let $\phi \colon Y \to X$ be a locally elementary morphism of $\GL$-varieties, let $y \in Y$, and let $x = \phi(y)$. Then there exists $m \ge 0$ and $G(m)$-stable open affine neighborhoods $Y_0$ of $y$ and $X_0$ of $x$ such that $\phi$ induces a $G(m)$-elementary map $Y_0 \to X_0$. Moreover, we can take $X_0=\ol{X}[1/h]$ for some $G(m)$-invariant function $h$ on $\ol{X}$.
\end{lemma}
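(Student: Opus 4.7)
My plan is as follows. The first assertion will follow immediately from the definition of $\phi$ being locally elementary at $y$ (since $\phi$ is globally locally elementary, it is so at every point); no proof is needed there. The real content is the ``moreover'' clause, where I must exhibit $X_0$ as a distinguished open $\ol{X}[1/h]$ for $h \in \Gamma(\ol{X})^{G(m)}$. I will accomplish this in two main steps: find an invariant $h$ cutting out a suitable neighborhood of $x$, and then refine the $G(m_0)$-elementary structure to a $G(m)$-elementary one in which everything fits.

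First I will apply the definition at $y$ to obtain $m_0 \ge 0$, $G(m_0)$-stable open affine neighborhoods $V_1 \ni y$ and $U_1 \ni x$, and isomorphisms $U_1 \cong B \times \bA^{\ulambda}$, $V_1 \cong C \times \bA^{\umu}$ making $\phi|_{V_1} = \psi \times \pi$ a $G(m_0)$-elementary morphism. Since $\ol{X}$ is affine, I will choose $h \in \Gamma(\ol{X})$ with $h(x) \ne 0$ and $\ol{X}[1/h] \subset U_1$, using that basic opens form a base of the topology. This $h$ need not be $G(m_0)$-invariant, but I can make it $G(m)$-invariant for $m$ large by the following observation: fixing an equivariant closed embedding $\ol{X} \hookrightarrow \bA^{\unu}$ lets me view $h$ as an element of $R_{\unu} = \Sym(\bS_{\unu}(\bV))$, and by the definition of polynomial representations, $h$ is a polynomial expression in finitely many vectors of $\bS_{\unu}(\bV)$, all of which lie in $\bS_{\unu}(k^n)$ for some $n$. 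Any $g \in \GL$ fixing $e_1, \ldots, e_n$ then fixes $h$; taking $m := \max(m_0, n)$, we have $G(m) \subseteq G(n)$, and so $h \in \Gamma(\ol{X})^{G(m)}$. I then set $X_0 := \ol{X}[1/h]$ and $Y_0 := \phi^{-1}(X_0) \cap V_1$, both $G(m)$-stable open affine.

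To verify $\phi|_{Y_0}$ is $G(m)$-elementary, I will recast the $G(m_0)$-elementary structure as a $G(m)$-elementary one. Using the splitting $\bV_{m_0} = k^{m-m_0} \oplus \bV_m$ corresponding to $G(m) \cong \GL(\bV_m) \subset G(m_0) \cong \GL(\bV_{m_0})$, I decompose $\bS_{\ulambda}(\bV_{m_0}) = \bS_{\ulambda}(k^{m-m_0}) \oplus \bS_{\ulambda'}(\bV_m)$ as $G(m)$-modules, where the first summand is $G(m)$-invariant and $\ulambda'$ is a pure tuple encoding the rest. This gives $\bA^{\ulambda} \cong D_{\ulambda} \times \bA^{\ulambda'}$ as $G(m)$-varieties, with $D_{\ulambda}$ a finite-dimensional affine space bearing trivial $G(m)$-action. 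The same recipe applied to $\umu$ produces $\umu'$ with $\ulambda' \subset \umu'$, and the projection $\pi$ decomposes compatibly as $\pi_D \times \pi'$. Setting $B' := B \times D_{\ulambda}$ and $C' := C \times D_{\umu}$, we obtain $U_1 = B' \times \bA^{\ulambda'}$, $V_1 = C' \times \bA^{\umu'}$ as $G(m)$-varieties, with $\phi|_{V_1} = \psi' \times \pi'$ where $\psi' := \psi \times \pi_D \colon C' \to B'$ is still surjective.

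Finally, $h|_{U_1}$ lies in $\cO(U_1)^{G(m)} = \cO(B') \otimes R_{\ulambda'}^{G(m)}$, and since $\ulambda'$ is pure, $R_{\ulambda'}^{G(m)} = k$. Thus $h|_{U_1} \in \cO(B')$ is a function on the finite-dimensional base alone, yielding $X_0 = B'[1/h|_{B'}] \times \bA^{\ulambda'}$ and $Y_0 = (\psi')^{-1}(B'[1/h|_{B'}]) \times \bA^{\umu'}$, which exhibits $\phi|_{Y_0}$ as $G(m)$-elementary. The main obstacle I anticipate is cleanly tracking the $G(m)$-refinement of the elementary decomposition in the third paragraph; the crucial technical input that makes the $G(m)$-invariance of $h$ cheap is the ``finite support'' observation for polynomial representations in the second paragraph.
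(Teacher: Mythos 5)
Your proof is correct, and its overall strategy is the same as the paper's: pick a basic open $\ol{X}[1/h]$ around $x$ inside the elementary neighborhood $U_1$, enlarge $m$ so that $h$ is $G(m)$-invariant (using that any element of a polynomial representation is fixed by some $G(n)$), observe that a $G(m)$-invariant function on an elementary variety is pulled back from the finite-dimensional base, and conclude that localizing at $h$ keeps both source and target elementary. The differences are in execution. The paper covers $U_1$ by finitely many basic opens via quasi-compactness and then selects the one containing $x$, whereas you choose a single $h$ with $h(x)\ne 0$ and $\ol{X}[1/h]\subset U_1$ from the start; this is a harmless simplification, since only the distinguished open through $x$ is ever used. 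More substantively, the paper imports two facts from \cite{polygeom}: that elementarity persists when $m$ is increased (Proposition~7.3 there) and that the induced map on the localizations is again elementary (Step~1 of the proof of Proposition~7.5 there). You instead prove both inline: the first via the shift decomposition $\bS_{\ulambda}(\bV_{m_0})\cong \bS_{\ulambda}(k^{m-m_0})\oplus \bS_{\ulambda'}(\bV_m)$ with $\ulambda'$ pure, giving $\bA^{\ulambda}\cong D_{\ulambda}\times\bA^{\ulambda'}$ as $G(m)$-varieties, and the second via the invariant-theory computation $\Sym(\bS_{\ulambda'}(\bV_m))^{G(m)}=k$ for pure $\ulambda'$, which forces $h|_{U_1}$ to come from the base $B'$ and makes the localized map visibly of the form (surjection of irreducible affine varieties) $\times$ (projection of pure affine spaces). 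The paper's version buys brevity by citation; yours buys self-containedness and makes explicit the mechanism (invariant functions live on the base) that the paper's citations encapsulate. All the small verifications your route needs (surjectivity and irreducibility of $C'[1/\psi'^*h]\to B'[1/h]$, affineness and $G(m)$-stability of $Y_0$, and $\ol{X}[1/h]=U_1[1/h]$) do go through as you indicate.
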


\begin{proof}
By definition, there exists $m \ge 0$ and open affine $G(m)$-stable neighborhoods $V$ of $y$ and $U$ of $x$ such that $\phi$ induces an elementary map $V \to U$. Now, $U$ is a union of basic opens $\ol{X}[1/h_i]$ for $i$ in some index set $I$. Since $U$ is quasi-compact, we can take $I$ to be finite. We can thus increase $m$ so that each $h_i$ is $G(m)$-invariant; note that $\phi$ remains elementary by \cite[Proposition~7.3]{polygeom}. Since $U$ is $G(m)$-elementary and $h_i$ is a $G(m)$-invariant function on $U$, it follows that $U[1/h_i]=\ol{X}[1/h_i]$ is also $G(m)$-elementary. Let $h=h_i$ be such that $x$ belongs to $X_0=\ol{X}[1/h]$, and let $Y_0=V \cap \phi^{-1}(X_0)$. Then $\phi \colon Y_0 \to X_0$ is elementary by Step~1 in the proof of \cite[Proposition~7.5]{polygeom}, and clearly $y \in Y_0$.
\end{proof}

Suppose $U$ is a finite dimensional vector space. Given a polynomial functor $F$, we define the \defn{shift} $\Sh_U{F}$, to be the functor $V \mapsto F\{U \oplus V\}$. This is again a polynomial functor. This construction induces a shift operation on polynomial representations, $\GL$-algebras, and affine $\GL$-varieties; see \cite[\S 4.1]{polygeom} for details. In particular, if $X$ is an affine $\GL$-variety then the pair $(\GL, \Sh_{k^m}{X})$ is isomorphic to the pair $(G(m), X)$, that is, restricting from $\GL$ to $G(m)$ is essentially the same as shifting by $k^m$. This point of view will be useful in the next proof.

\begin{lemma} \label{lem:key-2}
Let $\phi \colon Y \to X$ be a locally elementary morphism of (quasi-affine) $\GL$-varieties. Let $W \subset V$ be vector spaces, and consider the diagram
\begin{displaymath}
\xymatrix{
Y\{V\} \ar@{..>}[r] \ar[d]_{\phi} & Y\{W\} \ar[d]^{\phi} \\
X\{V\} \ar@{..>}[r] & X\{W\} }
\end{displaymath}
Let $\ol{y} \in Y\{W\}$ and $x \in X\{V\}$ be such that $\phi(\ol{y}) = x \vert_W$. Then there exists $y \in Y\{V\}$ such that $y \vert_W=\ol{y}$ and $\phi(y)=x$.
\end{lemma}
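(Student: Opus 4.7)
The plan is to reduce to the elementary case (Lemma~\ref{lem:key-1}) by localizing: I will find a $G(m)$-elementary neighborhood using Lemma~\ref{lem:key-4}, transfer the data $(\bar{y}, x)$ into the $G(m)$-equivariant setting by adjoining a $k^m$ factor, apply Lemma~\ref{lem:key-1} there, and then restrict back to the original $\GL$-setting.

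To find the neighborhood, I first need a point of $Y$ at which to apply Lemma~\ref{lem:key-4}. Fix a surjection $\pi \colon \bV \twoheadrightarrow W$; by Proposition~\ref{prop:qaff-func}, this induces $\pi^* \colon Y\{W\} \to Y$ and similarly $X\{W\} \to X$. Set $\tilde{y} = \pi^*(\bar{y}) \in Y$ and $\tilde{x} = \pi^*(\phi(\bar{y})) \in X$; by naturality, $\phi(\tilde{y}) = \tilde{x}$. Applying Lemma~\ref{lem:key-4} at $\tilde{y}$ produces $m \geq 0$, $G(m)$-stable open affine neighborhoods $Y_0 \ni \tilde{y}$ and $X_0 \ni \tilde{x}$, and a $G(m)$-invariant $h$ on $\ol{X}$ with $X_0 = \ol{X}[1/h]$, such that $\phi|_{Y_0} \colon Y_0 \to X_0$ is $G(m)$-elementary. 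Fix a decomposition $\bV = k^m \oplus \bV'$ identifying $G(m)$ with $\GL(\bV')$, and transfer the data: the surjections $k^m \oplus W \twoheadrightarrow W$ and $k^m \oplus V \twoheadrightarrow V$ give maps $Y\{W\} \to Y\{k^m \oplus W\}$ and $X\{V\} \to X\{k^m \oplus V\}$ via Proposition~\ref{prop:qaff-func}, under which $\bar{y}$ and $x$ have images $\bar{y}'$ and $x'$. Under the identification $Y\{k^m \oplus W\} = Y\{W\}_{G(m)}$ (and likewise for $X$), the pair $(\bar{y}', x')$ is the $G(m)$-analog of $(\bar{y}, x)$, and commutativity of the relevant projection/restriction squares yields $\phi(\bar{y}') = x'\vert_{k^m \oplus W}$.

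The main obstacle is verifying that $\bar{y}' \in Y_0\{W\}_{G(m)}$ and $x' \in X_0\{V\}_{G(m)}$, i.e., that the transferred data lands in the $G(m)$-elementary neighborhood. For $\bar{y}'$, this is immediate since its further restriction to $Y$ is $\tilde{y} \in Y_0$. For $x'$, the crucial point is that $G(m)$-invariance places $h$ in the polynomial functor of $\ol{X}$ evaluated at $k^m$, and hence $h$ extends coherently to a function on $\ol{X}\{k^m \oplus V\}$ whose non-vanishing locus cuts out $X_0\{V\}_{G(m)}$; the non-vanishing of $h$ at $\tilde{x} \in X_0$ then propagates to non-vanishing at $x'$. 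With this verified, Lemma~\ref{lem:key-1} applied to the elementary morphism $\phi|_{Y_0}$ produces $y' \in Y_0\{V\}_{G(m)} \subset Y\{k^m \oplus V\}$ lifting $(\bar{y}', x')$. Setting $y = y'\vert_V \in Y\{V\}$ (restriction along the inclusion $V \hookrightarrow k^m \oplus V$), the desired equalities $y\vert_W = \bar{y}$ and $\phi(y) = x$ follow from commutativity of restriction squares together with the fact that the composition $Y\{W\} \to Y\{k^m \oplus W\} \to Y\{W\}$ is the identity.
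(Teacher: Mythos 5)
Your overall plan---use Lemma~\ref{lem:key-4} to produce a $G(m)$-elementary neighborhood, transfer the data into the $G(m)$/shift picture, apply Lemma~\ref{lem:key-1}, and restrict back---is exactly the strategy the paper takes. However, there is a genuine gap in the step where you assert that $\bar{y}' \in Y_0\{W\}_{G(m)}$ ``is immediate since its further restriction to $Y$ is $\tilde{y}\in Y_0$.''

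The problem is one of order of choices. You fix $\pi\colon \bV\twoheadrightarrow W$ \emph{before} Lemma~\ref{lem:key-4} produces $m$, and the purported ``further restriction'' $Y\{k^m\oplus W\}\to Y$ would have to come (via Proposition~\ref{prop:qaff-func}) from a surjection $s\colon \bV=k^m\oplus\bV'\twoheadrightarrow k^m\oplus W$ whose composite with $\mathrm{proj}_W$ is $\pi$; only then does functoriality give $s^*(\bar y')=\tilde y$. Such an $s$ of the form $\mathrm{id}_{k^m}\oplus p$ exists precisely when $\pi$ kills $k^m$ (equivalently, factors through $\bV\to\bV'\xrightarrow{\ p\ }W$). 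But $k^m$ is determined by $m$, which you only obtain \emph{after} applying Lemma~\ref{lem:key-4} at the point $\tilde y=\pi^*(\bar y)$. If you then try to swap $\pi$ for a new surjection killing $k^m$, the point $\tilde y$ moves by a $\GL$-element that is generally not in $G(m)$, so there is no guarantee it stays in the $G(m)$-stable open $Y_0$. In the cleanest failure, take $W$ countably infinite and $\pi$ an isomorphism: then $\ker\pi=0$, no surjection $\bV\twoheadrightarrow k^m\oplus W$ lies over $\pi$ for $m>0$, and the asserted restriction map simply does not exist. Note also that, unlike $X_0=\ol{X}[1/h]$ with $h$ $G(m)$-invariant, Lemma~\ref{lem:key-4} gives no such clean cut-out description for $Y_0$ (in its proof $Y_0=V\cap\phi^{-1}(X_0)$), so the ``pulled-back $h$'' argument you use successfully for $x'$ has no direct analogue on the $Y$-side.

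The paper avoids this entirely by splitting into cases on $\dim W$. When $W$ is countably infinite, it identifies $W\cong\bV$, so $\bar y$ is literally a point of $Y$ and Lemma~\ref{lem:key-4} directly yields a decomposition $W=U\oplus W_1$ with $U=k^m$ \emph{inside} $W$; one then has $\bar y\in Y_0\{W_1\}$ and $x\in X_0\{V_1\}$ by the $h$-argument, and Lemma~\ref{lem:key-1} applies with no external $k^m$ adjoined. When $W$ is finite, the paper augments $W$ and $V$ by a fresh infinite-dimensional space $U$ and reduces to the previous case, with an explicit check (using the complement of $Y$ in $\ol Y$) that the final restriction lands back in $Y\{V\}$; this last check is also something your sketch elides. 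To repair your argument along your own lines, you would either need to restrict to a $\pi$ compatible with the eventual $k^m$ (which requires restructuring the order of choices, essentially reducing to the paper's identification $W\cong\bV$), or give an independent argument that $\bar y'$ avoids the shifted complement of $Y_0$.
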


\begin{proof}
The affine $\GL$-variety $\ol{X}$ is functorial for maps of vector spaces, and so there is an induced map $\ol{X}\{V\} \to \ol{X}\{W\}$. However, this map need not carry $X\{V\}$ into $X\{W\}$, which is indicated by the dotted arrow. However, our point $x \in X\{V\}$ does map into $X\{W\}$.

We first suppose that $W$ is infinite dimensional. By the Lemma~\ref{lem:key-4}, we can find a decomposition $W=U \oplus W_1$ with $U$ finite dimensional, and open affine $\GL$-subvarieties $Y_0$ of $\Sh_U(\ol{Y})$ and $X_0$ of $\Sh_U(\ol{X})$ such that:
\begin{itemize}
\item $X_0=(\Sh_U{\ol{X}})[1/h]$ for some function $h$ on $\ol{X}\{U\}$.
\item $\phi$ induces an elementary map $Y_0 \to X_0$ of $\GL$-varieties.
\item $y$ belongs to $Y_0\{W_1\}$.
\end{itemize}
Choose a decomposition $V=V' \oplus W$, and put $V_1=V' \oplus W_1$. Consider the diagram
\begin{displaymath}
\xymatrix{
Y_0\{V_1\} \ar[r] \ar[d]_{\phi} & Y_0\{W_1\} \ar[d]^{\phi} \\
X_0\{V_1\} \ar[r] & X_0\{W\} }
\end{displaymath}
Now, the function $h$ on $(\Sh_U{\ol{X}})\{V_1\}$ is pulled back from $(\Sh_U{\ol{X}})\{W_1\}$. It follows that $h$ is non-vanishing on $x \in (\Sh_U{\ol{X}})\{V_1\}$, since it is non-vanishing on the image $\ol{x}$ of $x$; we thus find that $x$ belongs to $X_0\{V_1\}$. The result now follows from Lemma~\ref{lem:key-1}.

We now treat the case where $W$ is finite dimensional. Let $U$ be an infinite dimensional vector space, and put $W'=W \oplus U$ and $V'=V \oplus U$. Let $x'$ be the image of $x$ under the map $X\{V\} \to X\{V'\}$ induced by the quotient $V' \to V$; note that $X$ is functorial for such maps. Similarly define $\ol{y}' \in Y\{W'\}$. By the above case, we obtain $y' \in Y\{V'\}$ such that $\phi(y')=x'$ and $y' \vert_{W'} = \ol{y}'$. Working in $\ol{Y}$, we have
\begin{displaymath}
(y' \vert_V) \vert_W = (y' \vert_{W'}) \vert_W = \ol{y}' \vert_W = \ol{y},
\end{displaymath}
which belongs to $Y\{W\}$. It follows that $y=y' \vert_V$ belongs to $Y\{V\}$ (if it belonged to the complement, then so would its restriction to $W$, which is $\ol{y}$, and this is not the case), and $y \vert_W=\ol{y}$. We also have
\begin{displaymath}
\phi(y) = \phi(y' \vert_V) = \phi(y') \vert_V = x' \vert_V = x.
\end{displaymath}
We thus see that $y$ is the sought after element of $Y\{V\}$.
\end{proof}

\begin{lemma} \label{lem:key-3}
Let $\phi \colon Y \to X$ be a locally elementary morphism of quasi-affine $\GL$-varieties, and let $Y \subset \bA^{\umu}$ and $X \subset \bA^{\ulambda}$ be locally closed embeddings. Suppose $(W, \eta_W)$ is an object of $\cC_{\umu}(Y)$ and $(V, \omega_V)$ is an object of $\cC_{\ulambda}(X)$, and we have an embedding $i \colon (W, \omega_W) \to (V, \omega_V)$ of $\ulambda$-spaces, where $\omega_W=\phi(\eta_W)$. Then there exists a $\umu$-structure $\eta_V$ on $V$ such that $(V,\eta_V) \in \cC_{\umu}(Y)$, $\phi(\eta_V)=\omega_V$, and $i \colon (W,\eta_W) \to (V,\eta_V)$ is a map of $\umu$-spaces.
\end{lemma}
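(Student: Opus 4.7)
The plan is to apply Lemma~\ref{lem:key-2} directly after a change of perspective. First, using the injection $i$ to identify $W$ with its image $i(W) \subset V$, the embedding condition $i^*\omega_V = \omega_W$ becomes the restriction condition $\omega_V \vert_W = \omega_W$ in the sense of Lemma~\ref{lem:key-2}. Combining this with the given equality $\phi(\eta_W) = \omega_W$ yields $\phi(\eta_W) = \omega_V \vert_W$, which is exactly the compatibility hypothesis of that lemma.

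Since $\phi \colon Y \to X$ is locally elementary by assumption and $Y, X$ are quasi-affine, Lemma~\ref{lem:key-2} applies and produces an element $\eta_V \in Y\{V\}$ with $\eta_V \vert_W = \eta_W$ and $\phi(\eta_V) = \omega_V$. Unwinding the conclusions: the membership $\eta_V \in Y\{V\}$ says exactly that $(V, \eta_V) \in \cC_{\umu}(Y)$; the equality $\eta_V \vert_W = \eta_W$ reads, under the identification of $W$ with $i(W)$, as $i^*\eta_V = \eta_W$, which is the statement that $i$ is an embedding of $\umu$-spaces from $(W, \eta_W)$ to $(V, \eta_V)$; and $\phi(\eta_V) = \omega_V$ is the last required condition.

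The main obstacle is purely notational rather than substantive: one must reconcile the tensor-theoretic pullback $i^*$ of Section~\ref{ss:forms} with the restriction map $(-)\vert_W$ coming from Proposition~\ref{prop:qaff-func}. Once a splitting $V = W \oplus V'$ is chosen, these two operations agree on the affine hulls $\ol{Y}$ and $\ol{X}$; the real content of Lemma~\ref{lem:key-2} is then to guarantee that the lifted element actually lands in the quasi-affine locus $Y$ rather than merely in $\ol{Y}$. Beyond keeping these identifications straight and invoking Lemma~\ref{lem:key-2}, no further ideas are needed.
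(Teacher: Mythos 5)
Your proposal is correct and matches the paper exactly: the paper's proof of this lemma is the single line that it ``is simply a reformulation of Lemma~\ref{lem:key-2},'' and your argument just spells out that translation (identifying $W$ with $i(W)\subset V$, reading $i^*$ as restriction, and invoking Lemma~\ref{lem:key-2}). The only tiny nitpick is that no splitting $V=W\oplus V'$ is needed to compare $i^*$ with $(-)\vert_W$, since restriction here is just contravariant functoriality of the affine hulls along the inclusion; this does not affect the validity of your argument.
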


\begin{proof}
This is simply a reformulation of Lemma~\ref{lem:key-2}
\end{proof}

\begin{proof}[Proof of Theorem~\ref{thm:key}]
Let $(E,\eta_E)$ be a countable universal homogeneous $\umu$-space. Put $\omega_E=\phi(\eta_E)$. We show that $(E, \omega_E)$ is a weakly homogeneous $\ulambda$-space. In what follows, $\ulambda$-structures are denoted with $\omega$ and $\umu$-structures with $\eta$.

We have a dominant map of $\GL$-varieties $\phi \colon \bA^{\umu} \to \ol{O}(E,\omega_E)$. Applying the decomposition theorem, we find that there are non-empty open $\GL$-subvarieties $Y \subset \bA^{\umu}$ and $X \subset \ol{O}(E,\omega_E)$ such that $\phi$ induces a locally elementary map $Y \to X$. Note that $\uage(E, \omega_E)$ is the image of $\phi$, and $X$ is an open subset of this. We claim that $X$ witnesses the weak homogeneity of $(E, \omega_E)$.

Let $\alpha \colon W \to E$ and $\gamma \colon W \to V$ be embeddings of $\ulambda$-spaces, with $(W, \omega_W)$ and $(V, \omega_V)$ in $\cC^{\rf}_{\ulambda}(X)$. Define $\eta_W=i^*(\eta_E)$, so that $\alpha$ is also an embedding of $\umu$-spaces. Let $\eta_V$ be the $\umu$-structure on $V$ produced by Lemma~\ref{lem:key-3}. Since $(E, \eta_E)$ is f-injective (Proposition~\ref{prop:homo}(b)), there is an embedding $\beta \colon V \to E$ of $\umu$-spaces such that $\gamma=\beta \circ \alpha$. Applying $\Phi$, we obtain a similar factorization in $\cC_{\ulambda}$. This shows that $(E, \omega_E)$ is an f-injective object of $\cC_{\ulambda}(X)$. It is therefore a homogeneous object of this category (Proposition~\ref{prop:homo}(a)), and therefore a weakly homogeneous $\ulambda$-space.
\end{proof}

\section{The main theorems} \label{s:main}

Our main results now follow with little difficulty.

\begin{theorem} \label{thm:main1}
Let $\cZ$ be a Zariski class of $\ulambda$-spaces. Then:
\begin{enumerate}
\item The class $\cZ$ contains a countable weakly homogeneous $\ulambda$-space $V$
\item Any other countable weakly homogeneous space in $\cZ$ is isomorphic to $V$.
\item There exists an algebraic functor $\Phi \colon \cC_{\umu} \to \cC_{\ulambda}$, for some pure tuple $\umu$, such that $V \cong \Phi(W)$, where $W$ is the countable universal homogeneous $\umu$-space.
\end{enumerate}
\end{theorem}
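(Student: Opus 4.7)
The plan is to combine three ingredients already in place: the existence of a typical morphism $\phi$ onto the orbit closure of the Zariski class (Proposition~\ref{prop:type-orb}), the existence of a countable universal homogeneous $\umu$-space for pure $\umu$ (Theorem~\ref{thm:univ-homo}), and the key transfer theorem (Theorem~\ref{thm:key}). The construction in (a) will automatically produce the data claimed in (c), and uniqueness in (b) will follow from Proposition~\ref{prop:weak-homo-age}.

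Concretely, given a Zariski class $\cZ$, pick any $V_0 \in \cZ$ and consider the irreducible closed $\GL$-subvariety $\ol{O}_{V_0} \subset \bA^{\ulambda}$. Since $\ol{O}_{V_0}$ is an orbit closure, Proposition~\ref{prop:type-orb} provides a typical morphism $\phi \colon \bA^{\umu} \to \ol{O}_{V_0}$ with $\umu$ pure. Composing with the closed embedding $\ol{O}_{V_0} \hookrightarrow \bA^{\ulambda}$ yields a map $\bA^{\umu} \to \bA^{\ulambda}$; let $\Phi \colon \cC_{\umu} \to \cC_{\ulambda}$ be the associated algebraic functor. Let $(E, \eta_E)$ be a countable universal homogeneous $\umu$-space (Theorem~\ref{thm:univ-homo}), and set $V = \Phi(E)$. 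This gives (c) by construction.

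To verify that $V \in \cZ$, note that by Theorem~\ref{thm:univ} the point $\eta_E$ has dense $\GL$-orbit in $\bA^{\umu}$, so $\ol{\GL\cdot \phi(\eta_E)} \supset \phi(\ol{\GL\cdot\eta_E}) = \phi(\bA^{\umu})$; taking closures and using that $\phi$ is dominant onto $\ol{O}_{V_0}$ gives $\ol{O}_V = \ol{O}_{V_0}$, hence $V$ is Zariski equivalent to $V_0$. That $V$ is weakly homogeneous is exactly the content of Theorem~\ref{thm:key}. This establishes (a).

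For (b), suppose $V'$ is any other countable weakly homogeneous space in $\cZ$. Then $V$ and $V'$ are both countable weakly homogeneous and Zariski equivalent to each other, so Proposition~\ref{prop:weak-homo-age} (equivalence of (a) and (b) there) gives $V \cong V'$. The only nontrivial step in the whole argument is the invocation of Theorem~\ref{thm:key}; everything else is a direct assembly of results from \S\ref{s:gl}--\S\ref{s:homo}, so I do not anticipate any real obstacle beyond ensuring that the typical morphism is used correctly (in particular, that $\umu$ is pure and that $\phi$ is genuinely dominant onto the orbit closure, both of which are guaranteed by Proposition~\ref{prop:type-orb}).
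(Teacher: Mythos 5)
Your proposal is correct and follows essentially the same route as the paper: take a typical morphism $\phi \colon \bA^{\umu} \to \ol{O}_{V_0}$ (with $\umu$ pure since the target is an orbit closure), apply the associated algebraic functor to the countable universal homogeneous $\umu$-space, invoke Theorem~\ref{thm:key} for weak homogeneity and dominance of $\phi$ for membership in $\cZ$, and deduce uniqueness from Proposition~\ref{prop:weak-homo-age}. The only difference is that you spell out the orbit-closure computation showing $\ol{O}_V = \ol{O}_{V_0}$ slightly more explicitly than the paper does, which is fine.
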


\begin{proof}
Let $V_0$ belong to $\cZ$, let $\phi \colon \bA^{\umu} \to \ol{O}(V_0)$ be a typical morphism, and let $\Phi \colon \cC_{\umu} \to \cC_{\ulambda}$ be the associated algebraic functor. Let $W$ be a universal homogeneous $\umu$-space. Then $\Phi(W)$ is a countable weakly homogeneous space (Theorem~\ref{thm:key}), and of class $\cZ$ since $\phi$ maps dominantly to $\ol{O}(V)$. Note that if $x \in \bA^{\umu}$ corresponds to $W$ then $\phi(x)$ belongs to $O(V)$, and corresponds to $\Phi(W)$. This proves (a) and (c), and we have already seen (b) in Proposition~\ref{prop:weak-homo-age}.
\end{proof}

The class of $\ulambda$-orders admits a quasi-order $\le$ by defining $V \le W$ if there exists an embedding $V \to W$. The equivalence relation induced by this quasi-order is called \defn{isogeny}, and $\le$ induces a partial order on the collection of isogeny classes. It is clear that Zariski equivalent spaces are isogenous (\S \ref{ss:tenorb}), and so isogeny is finer than Zariski equivalence.

\begin{corollary}
In each Zariski class $\cZ$ of $\ulambda$-spaces, there is a unique maximal isogeny class of countable $\ulambda$-space, namely, that of the weakly homogeneous countable $\ulambda$-space.
\end{corollary}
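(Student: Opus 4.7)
The plan is to show that the isogeny class of the countable weakly homogeneous representative of $\cZ$, produced by Theorem~\ref{thm:main1}(a), is a maximum element in the poset of isogeny classes of countable members of $\cZ$. Since a poset has at most one maximum, this settles uniqueness automatically.

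First I would fix a countable weakly homogeneous $\ulambda$-space $V \in \cZ$, whose existence is guaranteed by Theorem~\ref{thm:main1}(a). Let $W$ be an arbitrary countable $\ulambda$-space in $\cZ$. By definition of Zariski equivalence, $V$ and $W$ have the same generalized orbit, so in particular they are Zariski equivalent. Corollary~\ref{cor:wh-embed} then applies directly and yields an embedding $W \hookrightarrow V$. Thus the isogeny class of $V$ dominates the isogeny class of $W$ in the partial order induced by embeddings.

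Having established that the isogeny class $[V]$ is an upper bound for all isogeny classes inside $\cZ$, I would then invoke a short formal argument for uniqueness. If $[W']$ were any other maximal isogeny class of countable members of $\cZ$, then the previous step gives an embedding $W' \hookrightarrow V$; by maximality of $[W']$ combined with the obvious fact that $[V] \in \cZ$ as well (so $[V]$ is comparable to $[W']$ under $\le$), we get that $W'$ and $V$ are isogenous, forcing $[W']=[V]$.

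I do not anticipate any real obstacle here: the work has all been done in Corollary~\ref{cor:wh-embed} and Theorem~\ref{thm:main1}. The only subtlety worth noting in the writeup is to emphasize that the comparison is between \emph{countable} spaces (which is where the homogeneity machinery and Proposition~\ref{prop:homo}(d) underlying Corollary~\ref{cor:wh-embed} actually give embeddings), since the statement restricts to countable isogeny classes.
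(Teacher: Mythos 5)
Your proposal is correct and follows essentially the same route as the paper: the paper's proof likewise consists of applying Corollary~\ref{cor:wh-embed} to embed every countable member of $\cZ$ into the weakly homogeneous space $V$ from Theorem~\ref{thm:main1}, which makes the isogeny class of $V$ the maximum (hence the unique maximal) one. Your added remarks on the formal uniqueness-of-maximum argument and the restriction to countable spaces are harmless elaborations of what the paper leaves implicit.
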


\begin{proof}
Let $V$ be the weakly homogeneous countable $\ulambda$-space in $\cZ$. If $W$ is any countable $\ulambda$-space in $\cZ$ then $W$ embeds into $V$ (Corollary~\ref{cor:wh-embed}), and so the result follows.
\end{proof}

Using the above theorem, we can improve Theorem~\ref{thm:key}.

\begin{corollary}
An algebraic functor $\Phi \colon \cC_{\umu} \to \cC_{\ulambda}$ maps countable weakly homogeneous spaces to weakly homogeneous spaces.
\end{corollary}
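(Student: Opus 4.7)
The plan is to reduce the statement to Theorem~\ref{thm:key} by factoring the input weakly homogeneous space through a universal homogeneous space via an algebraic functor, and then composing.

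Let $V$ be a countable weakly homogeneous $\umu$-space. By Theorem~\ref{thm:main1}(c), there exists a pure tuple $\urho$, an algebraic functor $\Psi \colon \cC_{\urho} \to \cC_{\umu}$, and a countable universal homogeneous $\urho$-space $W$ such that $V \cong \Psi(W)$. Applying $\Phi$ (which is a functor) to this isomorphism yields
\[
\Phi(V) \;\cong\; \Phi(\Psi(W)) \;=\; (\Phi \circ \Psi)(W).
\]
Thus it suffices to show that $\Phi \circ \Psi$ is an algebraic functor $\cC_{\urho} \to \cC_{\ulambda}$, since then Theorem~\ref{thm:key} (applied with the pure tuple $\urho$) immediately gives that $(\Phi \circ \Psi)(W)$ is weakly homogeneous.

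For the remaining point, let $\psi \colon \bA^{\urho} \to \bA^{\umu}$ and $\phi \colon \bA^{\umu} \to \bA^{\ulambda}$ be the maps of $\GL$-varieties inducing $\Psi$ and $\Phi$. Their composite $\phi \circ \psi \colon \bA^{\urho} \to \bA^{\ulambda}$ is again a map of $\GL$-varieties, and for any vector space $U$ the induced map on $U$-points is $(\phi \circ \psi)\{U\} = \phi\{U\} \circ \psi\{U\}$ by functoriality of $X \mapsto X\{U\}$ (Proposition~\ref{prop:qaff-func}). Translating back through the identification between points of $\bA^{\bullet}\{U\}$ and $\bullet$-structures on $U$, this says precisely that the algebraic functor associated to $\phi \circ \psi$ is $\Phi \circ \Psi$.

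No step in this plan looks technically hard: the work has already been done in Theorem~\ref{thm:key} and Theorem~\ref{thm:main1}. The only thing to be careful about is the observation that the collection of algebraic functors is closed under composition, which as noted above is a routine consequence of the functoriality in Proposition~\ref{prop:qaff-func}. The whole proof is therefore expected to be quite short — essentially a sentence invoking Theorem~\ref{thm:main1}(c) to reduce to the case already handled in Theorem~\ref{thm:key}.
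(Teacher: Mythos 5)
Your proposal is correct and follows essentially the same route as the paper: factor $V \cong \Psi(W)$ through a universal homogeneous space via Theorem~\ref{thm:main1}(c) and apply Theorem~\ref{thm:key} to the composite algebraic functor. The only difference is that you make explicit the (routine) fact that composites of algebraic functors are algebraic, which the paper's one-line proof leaves implicit.
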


\begin{proof}
Let $V$ be a countable weakly homogeneous $\umu$-space. Let $\Psi \colon \cC_{\unu} \to \cC_{\umu}$ be an algebraic functor such that $V \cong \Psi(W)$ for some universal homogeneous $\unu$-space $W$. Then $\Phi(V) \cong \Psi(\Phi(W))$ is weakly homogeneous by Theorem~\ref{thm:key}.
\end{proof}

\begin{remark}
The theory of typical morphisms shows that the $\umu$ in Theorem~\ref{thm:main1}(c) is unique, if it is taken to be minimal. We can therefore define the \defn{type} of a weakly homogeneous countable $\ulambda$-space $V$ to be the pure tuple $\umu$ from this statement.
\end{remark}

We now turn to automorphism groups. Recall the following definition from \cite{homoten}:

\begin{definition} \label{defn:lin-olig}
Let $V$ be a vector space and let $G$ be a subgroup of $\GL(V)$. We say that $G$ is \emph{linear-oligomorphic} if for any $n \ge 0$ there exists a finite dimensional subspace $U$ of $V$ such that if $W$ is an $n$-dimensional subspace of $V$ then there exists $g \in G$ such that $gW \subset U$.
\end{definition}

The following is our main result connected to these objects:

\begin{theorem} \label{prop:homo-oli}
Let $(V, \omega)$ be a weakly homogeneous countable $\ulambda$-space. Then $G=\Aut(V, \omega)$ is linear-oligomorphic.
\end{theorem}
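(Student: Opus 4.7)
The plan is to reduce the statement to the known result from \cite{homoten} that the automorphism group of a countable universal homogeneous space is linear-oligomorphic, using the structure theorem for weakly homogeneous spaces provided by Theorem~\ref{thm:main1}(c).

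First I would invoke Theorem~\ref{thm:main1}(c) to obtain a pure tuple $\umu$, a countable universal homogeneous $\umu$-space $(E, \eta_E)$, and an algebraic functor $\Phi \colon \cC_{\umu} \to \cC_{\ulambda}$ induced by a $\GL$-equivariant morphism $\phi \colon \bA^{\umu} \to \bA^{\ulambda}$, such that $(V, \omega) \cong \Phi(E, \eta_E)$. Since $\Phi$ is the identity on underlying vector spaces, I may assume $V = E$ as vector spaces and $\omega = \phi(\eta_E)$. The key observation is then that $\Aut(E, \eta_E) \subseteq \Aut(V, \omega)$ as subgroups of $\GL(V)$: if $\sigma \in \GL(V)$ fixes $\eta_E$, then by $\GL$-equivariance of $\phi$ we have $\sigma \cdot \omega = \sigma \cdot \phi(\eta_E) = \phi(\sigma \cdot \eta_E) = \phi(\eta_E) = \omega$, so $\sigma \in \Aut(V, \omega)$.

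Since $(E, \eta_E)$ is a countable universal homogeneous $\umu$-space, the result of \cite{homoten} cited in the introduction gives that $\Aut(E, \eta_E)$ is linear-oligomorphic. The linear-oligomorphic property manifestly passes to any overgroup in $\GL(V)$: a finite-dimensional subspace $U$ witnessing the property for a given $n$ with respect to the smaller group also witnesses it for any larger group. Hence $\Aut(V, \omega)$ is linear-oligomorphic.

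The argument is essentially routine once Theorem~\ref{thm:main1}(c) is in hand; the main obstacle lies not in this proposition but in the technical work needed to establish Theorem~\ref{thm:main1}, in particular Theorem~\ref{thm:key}. The only real subtlety here is correctly verifying that algebraic functors preserve automorphism groups, which, as shown above, reduces to the $\GL$-equivariance of the defining morphism $\phi$.
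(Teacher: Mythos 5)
Your proposal is correct and matches the paper's proof exactly: both invoke Theorem~\ref{thm:main1}(c) to write $(V,\omega)$ as $\Phi$ applied to a countable universal homogeneous $\umu$-space, use \cite{homoten} for linear-oligomorphicity of the latter's automorphism group, and conclude via the inclusion of automorphism groups (which you spell out via $\GL$-equivariance of $\phi$, and the paper states without elaboration).
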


\begin{proof}
Per Theorem~\ref{thm:main1}, choose an algebraic functor $\Phi \colon \cC_{\umu} \to \cC_{\ulambda}$ such that $(V, \omega)=\Phi(V, \eta)$ for some universal homogeneous $\umu$-space $(V, \eta)$. Then $H=\Aut(V, \eta)$ is linear-oligomorphic \cite[Theorem~B]{homoten}, and $H \subset G$, so $G$ is linear-oligomorphic.
\end{proof}

\end{document}